\documentclass{article}
\usepackage{graphicx} % Required for inserting images
\usepackage{amsmath,amssymb,amsthm, epsfig}
\usepackage{color,xcolor}
\usepackage{fullpage}
\usepackage{stackrel}
\usepackage{hyperref}
\usepackage{mathtools}
\usepackage{mathrsfs}
\usepackage{booktabs}
\usepackage{comment}

\newcommand{\eps}{\varepsilon}
\title{\Large\bf Fully Nonlinear Elliptic Grad--Mercier Equations in Weighted Orlicz Spaces}
\author{\it by \smallskip \\ Junior da Silva Bessa, \quad Reshmi Biswas
\,\, and\,\, Mayra Soares}
\newcommand{\intav}[1]{\mathchoice {\mathop{\vrule width 6pt height 3 pt depth  -2.5pt
\kern -8pt \intop}\nolimits_{\kern -6pt#1}} {\mathop{\vrule width
5pt height 3  pt depth -2.6pt \kern -6pt \intop}\nolimits_{#1}}
{\mathop{\vrule width 5pt height 3 pt depth -2.6pt \kern -6pt
\intop}\nolimits_{#1}} {\mathop{\vrule width 5pt height 3 pt depth
-2.6pt \kern -6pt \intop}\nolimits_{#1}}}
\newcommand{\defeq}{\mathrel{\mathop:}=}
\newtheorem{theorem}{Theorem}[section]

\newtheorem{lemma}[theorem]{Lemma}

\newtheorem{proposition}[theorem]{Proposition}

\newtheorem{corollary}[theorem]{Corollary}

\theoremstyle{definition}

\newtheorem{definition}[theorem]{Definition}

\theoremstyle{remark}

\newtheorem{remark}[theorem]{Remark}

\numberwithin{equation}{section}

\date{}

\begin{document}
\maketitle
\begin{abstract}
\noindent In this article, we study the existence and global regularity results for the fully nonlinear elliptic Grad--Mercier type equations with oblique boundary conditions in the context of weighted Orlicz spaces. Our approach employs an asymptotic analysis in which global regularity is transferred from a limit profile, namely, the recession operator associated with the governing operator, using topological and stability methods. In addition to the main regularity result, we derive global weighted Orlicz estimates for the Hessian and establish global Morrey-type estimates for the problem. This article extends the results established by Caffarelli--Tomasetti (Comm. Pure Appl. Math. 76 (3): 604--615, 2023), Zhang et al. (Nonlinearity 39 (2): 025011, 2026), and Bessa (J. Funct. Anal. 286 (4): 110295, 2024).

\medskip
\noindent \textbf{Keywords}: Fully nonlinear equations; Grad-Mercier type equations; Oblique boundary conditions; Weighted Orlicz spaces.
\vspace{0.2cm}
	
\noindent \textbf{AMS Subject Classification: Primary 35A01, 35B65, 35D40, 35J25 Secondary  35A16, 35B30, 35J60}
\end{abstract}
\section{Introduction}

In this manuscript, we investigate the existence and regularity of solutions for a class of  fully nonlinear elliptic equations with oblique boundary conditions in the following setting:
\begin{equation}\label{1.1}
\left\{
\begin{array}{rclcl}
 F(D^2u,Du,u,x) &=& f(x)+\varphi(|\{z\in \Omega\,|\,u(z)\geq u(x)\}|)& \mbox{in} &   \Omega \\
\beta\cdot Du+\gamma u &=& g(x) &\mbox{on}& \partial \Omega,
\end{array}
\right.
\end{equation}
where $\Omega \subset \mathbb{R}^{n}$ ($n \geq 2$) is a bounded domain with regular boundary $\partial \Omega$, the data $f$, $\varphi$, $\beta$, $\gamma$ and $g$ satisfy suitable conditions of regularity. Here, \(F:\text{Sym}(n)\times \mathbb{R}^{n}\times \mathbb{R}\times \Omega\to \mathbb{R}\) is a second-order fully nonlinear elliptic operator, where \(\text{Sym}(n)=\{\mathrm{M}=(m_{ij})_{n\times n}\,|\, \mathrm{M}=\mathrm{M}^{t}\}\). More precisely, there exist positive constants \(\lambda\leq \Lambda\) such that
\[
\lambda \|\mathrm{Y}\|\leq F(\mathrm{X}+\mathrm{Y},\xi,r,x)-F(\mathrm{X},\xi,r,x)\leq \Lambda \|\mathrm{Y}\|,
\]
for all $\mathrm{X}, \mathrm{Y} \in \text{Sym}(n)$ such that $\mathrm{Y} \ge 0$, that is, $\mathrm{Y}$ is a non-negative definite matrix, and for all $(\xi, r, x) \in \mathbb{R}^n \times \mathbb{R} \times \Omega$. Under suitable conditions on the data (see the Subsection \ref{scmr}), we prove the existence of a solution to problem \eqref{1.1} in the \textit{weighted Orlicz–Sobolev} space (see Definition \ref{weightedorliczspaces}).

The study of problem \eqref{1.1} is motivated by the aim of generalizing the Grad–Mercier equation with oblique boundary conditions to fully nonlinear elliptic models. This is a model from plasma physics, where Grad introduced a class of equations to describe the behavior of plasma confined by magnetic fields in a toroidal device (TOKAMAK), see \cite{Grad79}. These equations, are often referred in the literature as queer differential equations (QDEs) or Grad equations, and arise from considering rearrangements of solutions and lead to highly nonlocal formulations. In particular, Grad observed that a simplified version of the plasma equations could be obtained by employing the monotone rearrangement of the solution and  proposed an equation of the form
\[
\Delta u=\varphi(u,u^{*}, (u^{*})',(u^{*})''),
\]
where \(u^{*}\) denotes the increasing rearrangement of \(u\) defined by 
\[
u^{*}(t)=\inf\{s: |\{u<s\}|\geq t\},
\]
and the function \(\varphi\) denotes the plasma dynamics.

Although this formulation provides a tractable reduction of the original physical model, it also introduces some significant mathematical challenges, such as the lack of locality and regularity. Over the years, several authors, for example, Temam \cite{Temam}, Mossino and Temam \cite{MossinoTemam}, and  Laurence and Stredulinsky \cite{LaurenceStredulinsky} proposed approximations and studied special cases, highlighting the intrinsic difficulties of the Grad equations and the need for new analytical tools. \\

Regarding the regularity theory for these models, we  emphasize the seminal work by Caffarelli and Tomasetti \cite{CaffToma21}, where the authors studied the following problem:
\begin{equation}\label{1.2}
\left\{
\begin{array}{rclcl}
 F(D^2u) &=& g_{u}(x)& \mbox{in} &   \Omega \\
u &=& \psi(x) &\mbox{on}& \partial \Omega,
\end{array}
\right.
\end{equation}
where $g_{u}(x) = g(|\{u > u(x)\}|)$, $\Omega \subset \mathbb{R}^{n}$ is a bounded domain with $\partial \Omega \in C^{1,1}$, and $F$ is a convex operator. Taking the assumption that $g$ is continuous and $\psi \in W^{2,p}(\Omega)$ for some $p>n$, the authors proved the existence of a solution to problem \eqref{1.2} and its global $W^{2,p}$ regularity. In addition, they derived the estimate
$$
\|u\|_{W^{2,p}(\Omega)} \leq \mathrm{C} \big(\|u\|_{L^{\infty}(\Omega)} + \|\psi\|_{W^{2,p}(\Omega)} + \|g_{u}\|_{L^{p}(\Omega)}\big),
$$
where $\mathrm{C}>0$ denotes a universal constant. Moreover, the authors established the regularity of the solutions up to $C^{1,\alpha}(\overline{\Omega})$ for every $0<\alpha<1$, providing an improvement of this regularity near the boundary. In particular, when \(F\) is the  Laplacian operator,  $F(\mathrm{M}) = \operatorname{tr}(\mathrm{M})$, and $\psi = 0$, Caffarelli et al. \cite{CafFarRes} showed that the solutions can develop a dead-core phenomenon, corresponding to the region where the solution reaches its maximum. Under suitable assumptions on the reaction term, they also studied uniqueness, sharp regularity, and non-degeneracy estimates for the solutions.

Recently, Zhang et al. \cite{ZJMZ} extended the existence and global $W^{2,p}$ regularity results of Caffarelli and Tomassetti to the model given by
\begin{equation}\label{1.3}
\left\{
\begin{array}{rclcl}
 F(D^2u,Du,u,x) &=& f(x)+g(\mathcal{E}_{u}(x))& \mbox{in} &   \Omega \\
u &=& \psi(x) &\mbox{on}& \partial \Omega,\\
\mathcal{E}_{u}(x) &=& |\{y\in\mathbb{R}^{n}:u(y)\geq u(x)\}| &\mbox{in}& \Omega,
\end{array}
\right.
\end{equation}
where \(g\), \(\psi\)  and \(\Omega\) are in the same setting of Caffarelli-Tomasetti's work \cite{CaffToma21}. In addition, to extend the dependence on the governing operator, the authors weakened the convexity assumption in the matrix variable of \(F\) using an asymptotic approach. More precisely, by using the concept of the recession operator defined by
\begin{equation}\label{recessionop}
F^{\star}(\mathrm{M},\zeta,r,x)=\lim_{\mu\to 0^{+}}\mu F\left(\frac{1}{\mu}\mathrm{M},\zeta,r,x\right), 
\end{equation}
for all $(\mathrm{M},\zeta,r,x)\in \operatorname{Sym}(n)\times\mathbb{R}^{n}\times \mathbb{R}\times \Omega$, this terminology for the class of operators originates from the work of Giga and Sato \cite{GS01} in the study of Hamilton–Jacobi PDEs. Under the convexity assumption on $F^{\star}$ with respect to the matrix variable, the authors established the existence of solutions to the problem \eqref{1.3} in $W^{2,p}(\Omega)$ for $n<p<+\infty$, together with the following estimate
\[
\|u\|_{W^{2,p}(\Omega)} \leq \mathrm{C} \big(\|u\|_{L^{\infty}(\Omega)} +\|\psi\|_{W^{2,p}(\Omega)} + \|g_{u}\|_{L^{p}(\Omega)}+\|f\|_{L^{p}(\Omega)}\big).
\]
In addition, the authors proved global $p$-BMO estimates for the case $g=0$.
\medskip

Concerning the problems of type \eqref{1.1}, substantial advances have been made in the regularity theory when \(\varphi=0\).  In parallel with the aforementioned references, we emphasize that Byun and Han in \cite{BH20} established \(W^{2,p}\) estimates for viscosity solutions of convex fully nonlinear elliptic problem:
\begin{equation*}
\left\{
\begin{array}{rclcl}
 F(D^2u,Du,u,x) &=& f(x)& \mbox{in} &   \Omega \\
\beta\cdot Du&=& 0 &\mbox{on}& \partial \Omega,
\end{array}
\right.
\end{equation*}
with the following estimate
\[
\|u\|_{W^{2,p}(\Omega)}\leq\mathrm{C}(\|u\|_{L^{\infty}(\Omega)}+\|f\|_{L^{p}(\Omega)}).
\]

 In the absence of the convexity assumption and within an asymptotic framework, Bessa et al. in  \cite{Bessa} extended these \(W^{2,p}\) estimates for the problems of type \eqref{1.1} when \(\varphi = 0\) (see also \cite{ZZZ21} when \(\gamma=g=0\)). More precisely, assuming a priori \(C^{1,1}\) estimates for the oblique boundary value problem governed by \(F^{\star}\), the authors proved that, if \(\beta,\gamma,g\in C^{1,\alpha}(\partial \Omega)\), the viscosity solutions of the problem
 \begin{equation}\label{obliqprob}
\left\{
\begin{array}{rclcl}
 F(D^2u,Du,u,x) &=& f(x)& \mbox{in} &   \Omega \\
\beta\cdot Du+\gamma u&=& g(x) &\mbox{on}& \partial \Omega,
\end{array}
\right.
\end{equation}
enjoy global \(W^{2,p}\) regularity, satisfying the following estimate:
\[
\|u\|_{W^{2,p}(\Omega)}\leq\mathrm{C}(\|u\|_{L^{\infty}(\Omega)}+\|f\|_{L^{p}(\Omega)}+\|g\|_{C^{1,\alpha}(\partial \Omega)}).
\]
Furthermore, the authors obtained several applications of these estimates, namely: global estimates for the obstacle problem with oblique boundary conditions, p-BMO type estimates, and density results for solutions. Along these lines, results have been obtained in more general weighted function spaces, such as Lorentz spaces (see \cite{BR}, \cite{ZZ22}). In particular, we refer to \cite{Bessa24} to highlight such results in weighted Orlicz spaces.

Next, in Table 1, we summarize these developments with respect to the problem \eqref{obliqprob}. In addition, notable contributions concerning regularity results for oblique boundary value problems for fully nonlinear elliptic equations can be found, for instance, in the non-exhaustive list \cite{BRS,BJ1,BKO,LiZhang,Lieb01}.

\begin{table}[h!]
\centering
\begin{tabular}{|c|c|c|c|}
\hline
\textbf{Governing operator} & \textbf{Boundary data} & \textbf{Source term space} & \textbf{Reference} \\
\hline
Convex & $\beta\in C^{1,\alpha}$ and $\gamma=g=0$ & \(L^{p}\) space & \cite[Theorem 4.6]{BH20} \\ \hline
Asymptotic convex  & $\beta\in C^{1,\alpha}$ and $\gamma=g=0$ & $L^{p}$ space & \cite[Theorem 2.3]{ZZZ21} \\ \hline
Relaxed convexity  & $\beta,\gamma,g\in C^{1,\alpha} $ & $L^{p}$ space & \cite[Theorem 1.2]{Bessa} \\ \hline
Convex & $\beta\in C^{1,\alpha}$ and $\gamma=g=0$ & Weighted Lorentz space & \cite[Theorem 1.1]{ZZ22} \\ \hline
Relaxed convexity & $\beta,\gamma,g\in C^{1,\alpha} $ & Weighted Lorentz space & \cite[Theorem 1.7]{BR} \\ \hline
Relaxed convexity & $\beta,\gamma,g\in C^{1,\alpha} $ & Weighted Orlicz space & \cite[Theorem 1.6]{Bessa24} \\ \hline
%\bottomrule
\end{tabular}
\caption{Summary of Regularity Results for Oblique Problems in Different Function Spaces}
\label{tab:regularidade}
\end{table}

Despite the advances mentioned above, the study of the existence and regularity of the problem \eqref{1.1} remains open in the literature,  mainly due to two difficulties. First, the equation in \eqref{1.1} exhibits a source term of semilinear type within the domain, which prevents a direct application of the regularity results available for fully nonlinear models such as those highlighted above. Second, the presence of the oblique boundary operator
\begin{equation}\label{opolbi}
\mathcal{G}(\vec{\xi},s,x):=\beta(x)\cdot \vec{\xi}+\gamma(x)s, 
\qquad (\vec{\xi},s,x)\in\mathbb{R}^{n}\times\mathbb{R}\times \partial \Omega,
\end{equation}
introduces an additional layer of difficulty, since on $\partial\Omega$ the boundary condition constitutes a first-order partial differential equation. In particular, the analysis of \eqref{1.1} cannot be carried out by treating the interior equation and the boundary condition separately.
The main goal of this paper,  precisely, is to address this gap by establishing the existence and global regularity results for the problem \eqref{1.1} within the framework of weighted Orlicz spaces.
\subsection{Hypotheses and  Main Results}\label{scmr}

Throughout this manuscript, we assume the following structural conditions for problem \eqref{1.1}:

\begin{enumerate}
\item[\bf$(A_1)$](\textbf{Operator}) Let $F:\text{Sym}(n)\times \mathbb{R}^n \times \mathbb{R}\times \Omega\to\mathbb{R}$ be a continuous function. There are constants $0 < \lambda \le \Lambda$, $a\geq 0$ and $b\geq 0$ such that
\begin{eqnarray*}
\mathcal{P}^{-}_{\lambda,\Lambda}(\mathrm{X}-\mathrm{Y}) - a |\zeta_1-\zeta_2| -b|r_1-r_2| &\le& F(\mathrm{X}, \zeta_1,r_1,x)-F(\mathrm{Y},\zeta_2,r_2,x) \nonumber \\
&\le& \mathcal{P}^{+}_{\lambda, \Lambda}(\mathrm{X}-\mathrm{Y})+ a |\zeta_1-\zeta_2| + b|r_1-r_2| \label{5}
\end{eqnarray*}
for all $\mathrm{X},\mathrm{Y} \in \text{Sym}(n)$, $\zeta_1,\zeta_2 \in \mathbb{R}^n$, $r_1,r_2 \in \mathbb{R}$, $x \in \Omega$, where \(\mathcal{P}^{\pm}_{\lambda,\Lambda}\) denote the \textit{Pucci's extremal operators} defined by
\begin{equation*}
\mathcal{P}^{+}_{\lambda,\Lambda}(\mathrm{X}) \defeq  \Lambda \sum_{e_i >0} e_i +\lambda \sum_{e_i <0} e_i \quad \text{and} \quad \mathcal{P}^{-}_{\lambda,\Lambda}(\mathrm{X}) \defeq \Lambda \sum_{e_i <0} e_i + \lambda \sum_{e_i >  0} e_i,
 \end{equation*}
 where $e_i = e_i(\mathrm{X})$ for $1\leq i\leq n$ denote the eigenvalues of $\mathrm{X}$. By the normalization condition, $F(0,0,0,x)=0$, for all $x\in \Omega$. 
  An operator fulfilling such a uniform ellipticity will be called \emph{$(\lambda, \Lambda, a, b)$ - elliptic operator. }
{\item[\bf$(A_2)$] (\textbf{Hypothesis on the data}) Let $|f|^{n} \in L^{\Phi}_{\omega}(\Omega)$ for some $\omega\in \mathfrak{A}_{i(\Phi)}$, where $\mathfrak{A}_{i(\Phi)}$ denotes the  \textit{Muckenhoupt class} of the \textit{lower index} of $\Phi\in\Delta_{2}\cap \nabla_{2}$ (see Subsection \ref{wos}). Assume the function $\varphi:[0,|\Omega|]\to \mathbb{R}$ is  continuous and the vector field $\beta:\partial \Omega\to \mathbb{R}^{n}$ belongs to  $C^{2}( \partial \Omega; \mathbb{R}^n)$ for some $\alpha\in(0,1)$ and satisfies, 
$$
\beta(x) \cdot \mathbf{\vec{n}}(x) \ge \delta_0 \quad \textrm{on} \quad \partial \Omega \quad \textrm{and} \quad \|\beta\|_{L^{\infty}(\partial \Omega)} \le 1,
$$
for some positive constant $\delta_0$ and $\mathbf{\vec{n}}$ denotes the inner inward  normal vector to $\Omega$. Moreover, $ \gamma \in C^{2}(\partial \Omega)$ with $\gamma \le 0$ and $g\in C^{1,\alpha}(\partial \Omega)$.}
%%%%%%%%%%%%%%%%%%
%%%%%%%%%%%%%%%
\item[\bf($A_3$)](\textbf{$C^{1,1}$ - interior estimates})
The recession operator $F^{\star}$, associated with the operator $F$, is well defined and satisfies $C^{1,1}_{\mathrm{loc}}$ \textit{a priori}  estimates. That is, if $F^{\star}(D^{2}h) = 0$ in $\mathrm{B}_1$, in the viscosity sense, then $h \in C^{1,1}(\overline{\mathrm{B}_{1/2}})$, and
$$
\|h\|_{C^{1,1}(\overline{\mathrm{B}_{1/2}})} \leq \mathfrak{C}_1 \|h\|_{L^{\infty}(\mathrm{B}_1)},
$$
for some constant $\mathfrak{C}_1 \geq 1$.

\item[\bf($A_4$)] (\textbf{$C^{1,1}$ - boundary estimates}) 
The recession operator $F^{\star}$, associated with the operator $F$, is well defined and  satisfies up-to-the-boundary $C^{1,1}$ \textit{a priori} estimates. That is, for any $x_0 \in \mathrm{B}^{+}_{1}$ and $g_0 \in C^{1,\alpha}(\overline{\mathrm{T}_1})$, for some $\alpha \in (0,1)$, there exists a solution $h \in C^{1,1}(\mathrm{B}^+_1) \cap C^0(\overline{\mathrm{B}^+_1})$ of the boundary value problem
$$
\left\{
\begin{array}{rclcl}
 F^{\star}(D^2 h,x_0) &=& 0& \mbox{in} &   \mathrm{B}^+_1 \\
\mathcal{G}(Dh,h,x)&=& g_{0}(x) &\mbox{on}& \mathrm{T}_1
\end{array}
\right.
$$
satisfying $
\|h\|_{C^{1,1}\left(\overline{\mathrm{B}^{+}_{1/2}}\right)} \le \mathfrak{C}_2 \left( \|h\|_{L^{\infty}(\mathrm{B}^{+}_1)} + \|g_0\|_{C^{1,\alpha}(\overline{\mathrm{T}_1})} \right),
$
for some constant $\mathfrak{C}_2 > 0$.
\item[\bf$(A_5)$](\textbf{Oscillation of the coefficients}) Given $x_0\in \Omega$, define 
$$
\Psi_{F}(x; x_0) \defeq \sup_{\mathrm{X} \in \textrm{Sym}(n)} \frac{|F(\mathrm{X},0,0,x) - F(\mathrm{X},0,0,x_0)|}{1+\|\mathrm{X}\|},
$$
which measures the oscillation of the coefficients of the operator $F$ around $x_0$. Denote for short $\Psi_{F}(x; 0) = \Psi_F(x)$. In this context, $\Psi_{F^{\star}}$ is a Hölder continuous function, in the $L^p$-average sense, for every $\mathrm{X} \in \mathrm{Sym}(n)$.  That is, there exist universal constants, depending only on $n$, $\lambda$, $\Lambda$, $p$, $\delta_0$, $\|\gamma\|_{C^{1,\alpha}(\partial \Omega)}$, and $\|\beta\|_{C^{1,\alpha}(\partial \Omega)}$, with $\alpha^{\prime} \in (0,1)$, $\bar{\theta} > 0$, and $0 < r_0 \le 1$, such that
$$
\left( \intav{\mathrm{B}_r(x_0) \cap \Omega} \Psi_{F^{\star}}(x,x_0)^{p} dx \right)^{1/p} \le \bar{\theta} r^{\alpha^{\prime}}, \quad 
\text{ for } x_0 \in \overline{\Omega} \mbox{ and } 0 < r \le r_0.$$ 

\end{enumerate}

In order to guarantee existence, uniqueness, and a comparison principle for the oblique problem, the following additional conditions are needed: 
\begin{enumerate}
\item[\bf $(E_1)$] There exists a modulus of continuity $\eta: [0,+\infty) \to [0,+\infty)$ with $\eta(0)=0$, such that
	$$
	F(\mathrm{X}_1, \zeta, r,x_1) - F(\mathrm{X}_2,\zeta,r,x_2) \le \eta\left(|x_1-x_2|\right)\left[(|q| +1) + \alpha_0 |x_1-x_2|^2\right]
	$$
	for any $x_1,x_2 \in \Omega$, $\zeta \in \mathbb{R}^n$, $r \in \mathbb{R}$, $\alpha_0 >0$ and $\mathrm{X}_1,\mathrm{X}_2 \in \textrm{Sym}(n)$ satisfying
	$$
	- 3 \alpha_0
	\begin{pmatrix}
		\mathrm{Id}_n& 0 \\
		0& \mathrm{Id}_n
	\end{pmatrix}
	\leq
	\begin{pmatrix}
		\mathrm{X}_2&0\\
		0&-\mathrm{X}_1
	\end{pmatrix}
	\leq
	3 \alpha_0
	\begin{pmatrix}
		\mathrm{Id}_n & -\mathrm{Id}_n \\
		-\mathrm{Id}_n& \mathrm{Id}_n
	\end{pmatrix},	
	$$
	where $\mathrm{Id}_n$ is the identity matrix.
\item[\bf$(E_2)$] $F$ is a \textit{proper} operator. Specifically, there exists a positive constant \(d\) such that,
$$
F(\mathrm{X},\xi,s,x)-F(\mathrm{X},\xi,r,x)\geq d\cdot (r-s),
$$
for any $\mathrm{X} \in \text{Sym}(n)$, $r,s \in \mathbb{R}$, with $s\leq r$, $x \in \Omega$, $\xi \in \mathbb{R}^n$.
\end{enumerate}

We are now able to state our first main result regarding existence and a $
W^{2,\Upsilon}_{\omega}
$ \emph{a priori} estimate.

\begin{theorem}\label{T1}
Assume the structural conditions {\bf $(A_1)$}-{\bf $(A_5)$} and {\bf $(E_1)$}-{\bf$(E_2)$} hold true and that \(\partial\Omega\in C^{3}\). Then, there exists  a \(L^{p}\)-viscosity solution \(u\) of \eqref{1.1},  where \(p=\bar{p}n\) for the constant \(\bar{p}>1\) given in Lemma \ref{mergulhoorliczlebesgue}. Furthermore, \(u\in W^{2,\Upsilon}_{\omega}(\Omega)\) for \(\Upsilon(t)=\Phi(t^{n})\), and the following estimate holds,
\begin{eqnarray*}
\|u\|_{W^{2,\Upsilon}_{\omega}(\Omega)}\leq \mathrm{C}(\|f\|_{L^{\Upsilon}_{\omega}(\Omega)}+\|\varphi_{u}\|_{L^{\Upsilon}_{\omega}(\Omega)}+\|g\|_{C^{1,\alpha}(\partial \Omega)}),
\end{eqnarray*}
where \(\mathrm{C}>0\) is a constant depending only on \(n\), \(\lambda\), \(\Lambda\), {\(a\), \(b\),} \(\bar{p}\), {\(p_{2}\)}, \(\Phi\), \(\omega\), \(\delta_0\), \(\bar{\theta}\), \(\alpha^{\prime}\), \(\|\beta\|_{C^{2}(\partial \Omega)}\), \(\|\gamma\|_{C^{2}(\partial \Omega)}\) and \(\|\partial \Omega\|_{C^{2,\alpha}}\).
\end{theorem}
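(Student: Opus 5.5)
We obtain a solution of \eqref{1.1} as a fixed point of a compact map, following Caffarelli--Tomasetti and Zhang et al., and we take the regularity for the frozen problem from the weighted Orlicz theory for \eqref{obliqprob} (\cite[Theorem 1.6]{Bessa24}, whose hypotheses coincide with $(A_1)$--$(A_5)$).

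\textbf{Step 1: the frozen problem.} Fix $p=\bar p n>n$ as in Lemma \ref{mergulhoorliczlebesgue}, so that $L^{\Upsilon}_{\omega}(\Omega)\hookrightarrow L^{p}(\Omega)$. For $v$ in the closed convex set
\[
\mathcal K=\{v\in C^{1}(\overline\Omega):\|v\|_{C^{1}(\overline\Omega)}\le R\}
\]
we consider
\[
F(D^2u,Du,u,x)=f+\varphi_v,\qquad \beta\cdot Du+\gamma u=g \text{ on }\partial\Omega,
\]
where $\varphi_v(x)=\varphi(|\{v\ge v(x)\}|)$. Since $\varphi$ is continuous on the compact interval $[0,|\Omega|]$, we have $|\varphi_v|\le\|\varphi\|_{\infty}$, uniformly in $v$. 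Hence $f+\varphi_v\in L^{\Upsilon}_{\omega}(\Omega)$, because bounded functions lie in this space ($\omega$ is locally integrable and $\Omega$ is bounded).

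The right-hand side is bounded and measurable but not continuous. We therefore use $L^p$-viscosity solutions and obtain existence by approximation. We regularize $\varphi_v$ by mollification. For each regularized datum, existence and uniqueness come from Perron's method together with the comparison principle guaranteed by $(E_1)$--$(E_2)$ and $\gamma\le0$. We then pass to the limit using stability of $L^p$-viscosity solutions and the uniform $W^{2,p}$ bound.

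\textbf{Step 2: estimates and continuity of the solution map.} Let $T(v)=u$ denote the unique solution. The ABP-type maximum principle for oblique problems, combined with properness $(E_2)$, gives
\[
\|u\|_{L^\infty}\le C\bigl(\|f\|_{L^{p}}+\|\varphi\|_{\infty}+\|g\|_{C^{1,\alpha}}\bigr).
\]
The regularity theorem of \cite{Bessa24} then gives
\[
\|u\|_{W^{2,\Upsilon}_\omega}\le C\bigl(\|u\|_{L^\infty}+\|f+\varphi_v\|_{L^{\Upsilon}_\omega}+\|g\|_{C^{1,\alpha}}\bigr).
\]
Through the embedding into $W^{2,p}$ and Morrey's embedding, this bounds $\|u\|_{C^{1,1-n/p}}$ by a constant $R$ independent of $v$. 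Consequently $T(\mathcal K)\subset\mathcal K$, and $T(\mathcal K)$ is precompact in $C^1$.

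The delicate point is continuity of $T$. Since $\varphi$ is continuous, it is enough to show that $v_k\to v$ in $C^{1}$ implies $|\{v_k\ge v_k(x)\}|\to|\{v\ge v(x)\}|$ for a.e. $x$. This convergence can fail exactly where the level sets of $v$ have positive measure. Following Caffarelli--Tomasetti, we handle this in one of two ways. One option is to modify the map to $\varphi_v^{\epsilon}(x)=\varphi\bigl(\intav{}\,\cdots\bigr)$, replacing the indicator in the distribution function by a smooth Heaviside; this is continuous in $v$. The other is to restrict $\mathcal K$ to functions whose level sets are null, using that $D^2u=0$ a.e. on $\{Du=0\}$ for $W^{2,p}$ functions. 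In either case, $L^p$-stability together with uniqueness yields continuity of $T$. Schauder's theorem then gives a fixed point $u_\epsilon$.

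\textbf{Step 3: removing the regularization.} The bounds on $u_\epsilon$ are uniform in $\epsilon$, so $u_\epsilon\to u$ in $C^{1}$ along a subsequence, and $u_\epsilon\rightharpoonup u$ weakly in $W^{2,\Upsilon}_\omega$. We need to show that the limit solves \eqref{1.1} with $\varphi_u$. At points where $\{u=u(x)\}$ is null, the distribution functions converge. On a level set $L$ of positive measure, $D^2u=0$ a.e. on $L$, so the equation forces $F(0,0,u,x)=f+\text{(limit of }\varphi_{u_\epsilon})$ there. The argument is then adapted from \cite{ZJMZ} to check that this limit coincides with $\varphi_u$, possibly after passing to a suitable subsequence. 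Finally, the stated estimate follows from the \cite{Bessa24} bound, after absorbing $\|u\|_{L^\infty}$ through the ABP bound and the $L^{\Upsilon}_\omega\hookrightarrow L^{p}$ embedding.

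The main obstacle is the continuity and compactness of the nonlocal map $v\mapsto\varphi_v$ on flat level sets, that is, Steps 2 and 3. The weighted Orlicz regularity is a black box from earlier work.
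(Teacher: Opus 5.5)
Choosing option (a) puts you on the paper's route. The paper's penalization $\varphi_{v,\varepsilon}(x)=\varphi\bigl(\frac1\varepsilon\int_0^\varepsilon|\{v\ge v(x)-t\}|\,dt\bigr)$ is exactly $\varphi\bigl(\int_\Omega H_\varepsilon(v(z)-v(x))\,dz\bigr)$ for the Lipschitz ramp $H_\varepsilon(s)=\min\{1,\max\{0,1+s/\varepsilon\}\}$. The other ingredients also coincide with the paper's: Perron plus \cite{Bessa24} for the frozen problem, Schauder, $\varepsilon$-independent $W^{2,\Upsilon}_\omega$ bounds, $C^{1,1-n/p}$ compactness, and Lemma \ref{stability}. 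Two of your choices improve on the paper. Your check that $T$ is continuous (uniform continuity of $v\mapsto\varphi_{v,\varepsilon}$ in $L^\infty$, plus uniqueness and stability) fills a step the paper skips. Applying the \cite{Bessa24} bound directly to the limit $u$ is also a cleaner way to get the final estimate than the paper's weak lower semicontinuity argument.

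Option (b), however, must be dropped. $C^1$ functions with null level sets form neither a closed set ($x_1/k\to0$) nor a convex one ($\frac12v+\frac12(-v)=0$). Moreover, $T$ need not map into this set, because $D^2u=0$ a.e.\ on $\{Du=0\}$ is fully compatible with flat zones. For example, take $F(\mathrm{M},\xi,r,x)=\operatorname{tr}\mathrm{M}-r$, $\beta$ the inner normal, $\gamma=g=0$, and $f,\varphi$ constant; then $T(v)$ is the same constant for every $v$.

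The genuine gap is Step 3, which is the existence argument itself. Set $\delta=\|u_\varepsilon-u\|_{L^\infty(\Omega)}$. Then $\{u\ge u(x)-t+2\delta\}\subset\{u_\varepsilon\ge u_\varepsilon(x)-t\}\subset\{u\ge u(x)-t-2\delta\}$, which gives $\varphi_{u_\varepsilon,\varepsilon}(x)\to\varphi_u(x)$ whenever $|\{u=u(x)\}|=0$, as you say. On a level set $L=\{u=c\}$ of positive measure, though, these inclusions only trap the argument of $\varphi$, asymptotically, between $|\{u>c\}|$ and $|\{u\ge c\}|$. At that point you simply assert, ``adapted from \cite{ZJMZ}'', that the limit is $\varphi(|\{u\ge c\}|)$.

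Your heuristic does not supply this, for two reasons. First, writing $F(0,0,c,x)=f+\psi$ on $L$, with $\psi$ a weak-$*$ limit of $\varphi_{u_\varepsilon,\varepsilon}$, presupposes a limit equation. Lemma \ref{stability} needs strong $L^p$ convergence of the right-hand sides, which is precisely what is in doubt, and weak convergence of $D^2u_\varepsilon$ does not pass through an operator that is nonlinear in the Hessian. Second, even granting that identity, it only says $\psi=F(0,0,c,\cdot)-f$ on $L$; nothing forces $\psi=\varphi(|\{u\ge c\}|)$.

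For comparison, the paper does not treat flat zones separately. It asserts pointwise convergence $\varphi_{u_{\varepsilon_j},\varepsilon_j}\to\varphi_u$ from uniform convergence and the Lebesgue differentiation theorem, then applies dominated convergence and Lemma \ref{stability}. The inclusions above make that assertion rigorous off positive-measure level sets of $u$. They make it rigorous at every point if $\|u_{\varepsilon_j}-u\|_{L^\infty(\Omega)}=o(\varepsilon_j)$. So the paper's brief justification does not settle the flat-zone case either. As written, though, your proposal leaves the decisive identification unproved. It needs an actual argument on flat zones (such a rate would suffice), not a citation.
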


Provided that the classical estimate in
$
W^{2,\Upsilon}_{\omega}
$
controls only the integrability of the Hessian, there exists a natural motivation for pursuing a stronger estimate on the
$
L^{\Upsilon}_{\omega}\text{-}\mathrm{BMO}
$
framework, which also captures the local oscillatory behavior of the Hessian. 
More precisely, estimates in the generalized BMO-type spaces are borderline regularity results, lying between pure integrability estimates and H\"older continuity. They are particularly relevant in situations where the data fail to possess sufficient smoothness to yield \(C^{2,\alpha}\)-regularity, but still exhibit controlled mean oscillation. Therefore, obtaining a
$
L^{\Upsilon}_{\omega}\text{-}\mathrm{BMO}
$
estimate can be viewed as a refinement of the
$
W^{2,\Upsilon}_{\omega}
$-regularity
theory, providing sharper control on the second derivatives and extending the regularity theory to critical oscillation spaces.

In this sense, our second main  result presents a
$L^{\Upsilon}_{\omega}\text{-}\mathrm{BMO}
$
estimate for the Hessian, which  provides a finer quantitative description of the regularity of \(D^{2}u\).
For this purpose, we  need to assume the following extra hypothesis:
\begin{enumerate}
\item[\bf ($A_4$)$^{\star}$] (\textbf{$C^{2,\rho}$ - boundary estimates}) The recession operator $F^{\star}$, associated with $F$,  is well defined and fulfills a $C^{2, \rho}$ \textit{a priori} estimate up to the boundary, for some $\rho \in (0,1)$, i.e., for any $g_0 \in C^{1, \rho}(\mathrm{T}_1)$, $\beta,\gamma \in C^{1, \rho}(\mathrm{T}_1)$, there exists $\mathrm{c}^{\ast}>0$, depending only on universal parameters, such that any viscosity solution of
$$
\left\{
\begin{array}{rclcl}
F^{\star}(D^2 \mathfrak{h}, x) &=& 0& \mbox{in} & \mathrm{B}^+_1,\\
\mathcal{G}(D\mathfrak{h},\mathfrak{h},x)  &=& g_0(x) & \mbox{on} &\mathrm{T}_1 
\end{array}
\right.
$$
belongs to $C^{2, \rho}(\mathrm{B}^{+}_1) \cap C^0(\overline{\mathrm{B}^+_1})$ and satisfies
$$
\|\mathfrak{h}\|_{C^{2, \rho}\left(\overline{\mathrm{B}^+_{r}}\right)} \le\mathrm{c}^{\ast}r^{-(2+\rho)} \left(\|\mathfrak{h}\|_{L^{\infty}(\mathrm{B}^+_1)} + \|g_0\|_{C^{1,\rho}(\overline{\mathrm{T}_1})}\right)\,\,\, \forall \,\,0<r \ll 1.
$$
\end{enumerate}

\begin{theorem}[\bf Weighted Orlicz BMO Regularity for the Hessian]\label{T2}
Assume that the structural conditions {\bf$(A_1)-(A_3)$}, $(A_4)^\star$ and $(A_5)$ hold. Let $u$ be a $L^{p}$-viscosity solution of \eqref{1.1} with $p=\bar{p}n$ for the constant $\bar{p}>1$ of the Lemma \ref{mergulhoorliczlebesgue}, $f\in L^{\Upsilon}_{\omega}-\mathrm{BMO}(\Omega)$ for $\Upsilon(t)=\Phi(t^{n})$. Then, $D^{2}u\in L^{\Upsilon}_{\omega}-\mathrm{BMO}(\Omega)$ with the following estimate
\begin{equation*}
\|D^{2}u\|_{L^{\Upsilon}_{\omega}-\mathrm{BMO}(\Omega)}\leq \mathrm{C}(\|f\|_{L^{\Upsilon}_{\omega}-\mathrm{BMO}(\Omega)}+\|\varphi_{u}\|_{L^{\Upsilon}_{\omega}-\mathrm{BMO}(\Omega)}+\|g\|_{C^{1,\alpha}(\partial\Omega)}),
\end{equation*}
for a positive constant $\mathrm{C}$ that depends only on $n$, $\lambda$, $\Lambda$, {$a$, $b$,} $\bar{p}$, {$p_{2}$}, $\Phi$, $\omega$, $\delta_{0}$, $\bar{\theta}$, $\alpha^{\prime}$, $\|\beta\|_{C^{2}(\partial \Omega)}$, $\|\gamma\|_{C^{2}(\partial \Omega)}$ and $\|\partial \Omega\|_{C^{2,\alpha}}$.
\end{theorem}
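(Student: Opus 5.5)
The plan is to freeze the nonlocal term. Set $\varphi_u(x):=\varphi(|\{z\in\Omega: u(z)\ge u(x)\}|)$ and view \eqref{1.1} as a fully nonlinear oblique problem with right-hand side $\tilde f:=f+\varphi_u$. The BMO estimate can then be imported from the linearized (frozen) oblique theory. Since $\varphi$ is continuous on the compact interval $[0,|\Omega|]$, $\varphi_u$ is bounded and measurable. Its $L^{\Upsilon}_{\omega}$-BMO seminorm is finite, because a bounded function has mean oscillation at most $2\|\varphi\|_{L^\infty}$ on every ball. So $\tilde f\in L^{\Upsilon}_{\omega}$-$\mathrm{BMO}(\Omega)$ with
\[
\|\tilde f\|_{L^{\Upsilon}_{\omega}-\mathrm{BMO}(\Omega)}\le \|f\|_{L^{\Upsilon}_{\omega}-\mathrm{BMO}(\Omega)}+\|\varphi_u\|_{L^{\Upsilon}_{\omega}-\mathrm{BMO}(\Omega)}.
\]
Once this is set up, $u$ is an $L^p$-viscosity solution of $F(D^2u,Du,u,x)=\tilde f$ in $\Omega$, $\mathcal{G}(Du,u,x)=g$ on $\partial\Omega$, and the nonlocal structure plays no further role.

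Next I would run the standard approximation scheme for BMO estimates of the Hessian, in the form used in \cite{Bessa,Bessa24}. After flattening $\partial\Omega$ (legitimate since $\partial\Omega\in C^{3}$, with $\beta,\gamma\in C^2$ preserved) and rescaling $u_\mu(x)=\mu u(x)$ with $\mu$ small, the operator $F_\mu(\mathrm{M},\xi,r,x)=\mu F(\mu^{-1}\mathrm{M},\ldots)$ is uniformly close to $F^\star$. Hypothesis $(A_5)$ controls the oscillation in $x$, and $(A_1)$ makes the lower-order terms $a,b$ small after scaling. On each ball $\mathrm{B}_r(x_0)\cap\Omega$ (interior or boundary), compare $u$ with the solution $\mathfrak h$ of the frozen problem $F^\star(D^2\mathfrak h,x_0)=0$ with the same oblique data. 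Then:
\begin{enumerate}
\item[(i)] By $(A_3)$ and $(A_4)^\star$, $\mathfrak h$ is $C^{2,\rho}$, so $D^2\mathfrak h$ has oscillation $\lesssim (s/r)^{\rho}$ on subballs of radius $s$.
\item[(ii)] By the stability and ABP-type estimates for oblique problems, $\|D^2(u-\mathfrak h)\|$ is controlled in $L^{p}$-average by the mean oscillation of $\tilde f$ plus small terms.
\end{enumerate}
Combining (i) and (ii) gives a Campanato iteration
\[
\Big(\intav{\mathrm{B}_{s}}|D^2u-(D^2u)_{s}|^{p}\Big)^{1/p}\le C\Big(\frac{s}{r}\Big)^{\rho}\Big(\intav{\mathrm{B}_{r}}|D^2u-(D^2u)_{r}|^{p}\Big)^{1/p}+C\,\|\tilde f\|_{p\text{-}\mathrm{BMO}}+C\|g\|_{C^{1,\alpha}}.
\]
Iterating yields a $p$-BMO bound for $D^2u$ with $p=\bar p n$. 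The boundary datum enters only through the $C^{1,\alpha}$ norm of $g$ in $(A_4)^\star$.

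Finally, I would upgrade from $L^p$-BMO to $L^{\Upsilon}_{\omega}$-BMO. Apply the previous step on each ball and use the embedding of Lemma \ref{mergulhoorliczlebesgue}: since $\Phi\in\Delta_2\cap\nabla_2$ and $\omega\in\mathfrak A_{i(\Phi)}$, the weighted Orlicz average of $|\tilde f-(\tilde f)_{B}|$ dominates its unweighted $L^{\bar p n}$ average up to constants depending on $\Phi,\omega$. In the other direction, I would pass from the $L^p$ oscillation of $D^2u$ back to the weighted Orlicz oscillation via a good-$\lambda$ / Calderón–Zygmund covering argument at the level of oscillations. This argument runs on the function $|D^2u-(D^2u)_B|$, using the weighted maximal function bound for $\omega\in\mathfrak A_{i(\Phi)}$. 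It is the same machinery behind Theorem \ref{T1}, but now applied to $u-\ell_B$ with $\ell_B$ the quadratic polynomial matching $(D^2u)_B$.

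I expect this last transfer to be the main obstacle. Near the boundary, subtracting a quadratic polynomial changes the oblique datum $g$ by $\mathcal G(D\ell_B,\ell_B,x)$. That perturbation must be absorbed, and its $C^{1,\alpha}$ norm controlled uniformly in the ball, via $\beta,\gamma\in C^2$ and the $L^\infty$ bound on $u$ from Theorem \ref{T1}.
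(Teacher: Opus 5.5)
Your freezing step is sound. $\varphi_u$ is bounded and measurable, so $\tilde f=f+\varphi_u$ lies in $L^{\Upsilon}_{\omega}-\mathrm{BMO}(\Omega)$ with the triangle-inequality bound, and $u$ solves the frozen oblique problem. For an estimate on an arbitrary given solution, this is more natural than the paper's route. The paper re-runs the penalization of Proposition \ref{penalyzedproblem}, applies Corollary \ref{cor:hes-BMO} to each $u_{\varepsilon}$, and lets $\varepsilon\to0$. That only reaches the constructed solution, uses $(E_1)$--$(E_2)$ (which Theorem \ref{T2} does not assume), and needs a limit of $\|\varphi_{u_{\varepsilon},\varepsilon}\|_{L^{\Upsilon}_{\omega}-\mathrm{BMO}(\Omega)}$ that $L^{\Upsilon}_{\omega}$-convergence does not provide.

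\textbf{The gap.} The failing step is the sentence ``Iterating yields a $p$-BMO bound for $D^2u$''. Your recursion has to be started at a fixed scale $r_0$. The oscillation of $D^2u$ on $\mathrm{B}_{r_0}(x_0)\cap\Omega$ is bounded only by a multiple of $r_0^{-n/p}\|D^2u\|_{L^{p}(\Omega)}$, hence, via the $W^{2,p}$ estimate, by $\|u\|_{L^{\infty}(\Omega)}+\|\tilde f\|_{L^{p}(\Omega)}+\|g\|_{C^{1,\alpha}(\partial\Omega)}$. The size of $\tilde f$ is invisible to a BMO seminorm (constants have seminorm zero), so this term cannot be absorbed.

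No argument can absorb it, because the inequality as stated is false. Take the following data:
\begin{itemize}
\item $n=2$, $\varphi\equiv0$, $F(\mathrm{M},\xi,r,x)=\operatorname{tr}\mathrm{M}$, $f\equiv1$;
\item $\beta=\mathbf{\vec{n}}$, $\gamma\equiv-1$, $g\equiv0$;
\item $\Omega$ a $C^{\infty}$ rounded square whose boundary contains the segments $\{x_2=-1,\ |x_1|\le\frac12\}$ and $\{x_1=1,\ |x_2|\le\frac12\}$.
\end{itemize}
All hypotheses hold: $F^{\star}=F$, $\Psi_{F^{\star}}\equiv0$, and the Laplacian has the interior $C^{1,1}$ and oblique $C^{2,\rho}$ estimates. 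The Robin problem $\Delta u=1$, $\partial_{\mathbf{\vec{n}}}u-u=0$ has a unique classical solution. Suppose $D^2u$ were constant. Then $u$ would be a quadratic polynomial with $\Delta u=1$. On the two flat pieces, the boundary condition is a polynomial identity in the arc parameter $t$, with $t^2$-coefficients $-\frac12\partial_{11}u$ and $-\frac12\partial_{22}u$. This forces $\partial_{11}u=\partial_{22}u=0$, a contradiction. So $\|D^2u\|_{L^{\Upsilon}_{\omega}-\mathrm{BMO}(\Omega)}>0$, while $\|f\|_{L^{\Upsilon}_{\omega}-\mathrm{BMO}(\Omega)}=\|\varphi_u\|_{L^{\Upsilon}_{\omega}-\mathrm{BMO}(\Omega)}=\|g\|_{C^{1,\alpha}(\partial\Omega)}=0$.

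Hence what your scheme actually proves is the estimate with the starting-scale quantity $\|u\|_{L^{\infty}(\Omega)}+\|f+\varphi_u\|_{L^{p}(\Omega)}$ added on the right; compare the $\|u\|_{L^{\infty}}$-term kept in Theorem \ref{Casevarphiequaltozero}. The paper's argument fails at the same spot. Corollary \ref{cor:hes-BMO}, on which its proof of Theorem \ref{T2} rests, drops that term through $\|f_j\|_{L^{p}(\Omega)}\le\mathrm{C}_p\|f_j\|_{p-\mathrm{BMO}(\Omega)}$, which is false for constants.

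\textbf{Two secondary points.}
\begin{itemize}
\item The step you single out as the main obstacle is not one. Remark \ref{Remark2.12} together with John--Nirenberg makes the $L^{\Upsilon}_{\omega}$-BMO, BMO and $\bar{p}n$-BMO seminorms equivalent, so no good-$\lambda$ argument is needed.
\item Step (ii) is misstated for fully nonlinear $F$. The difference $u-\mathfrak h$ only satisfies Pucci inequalities, so ABP and stability give $L^{\infty}$-closeness to $\mathfrak h$. The $L^{p}$ control of the Hessian of $u$ minus the approximating quadratic at each scale must then come from $W^{2,p}$ estimates applied to that difference.
\end{itemize}
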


%\subsubsection*{Challenges Concerning Weighted Orlicz–Sobolev Estimates for the Model \eqref{1.1}}

\paragraph{Overview of the approach.} The approach employed to prove Theorems \ref{T1} and \ref{T2} differs from the ideas commonly used for problems of the form \eqref{obliqprob}, since the right-hand side of the equation depends on the solution itself. This prevents us, at first, from directly applying results for models without such a dependence. Inspired by \cite{Bessa24} and \cite{CaffToma21} (see also \cite{BJ1} and \cite{ZJMZ}) we overcome this difficulty by means of a penalization method combined with topological arguments. 
More specifically, in order to prove Theorem \ref{T1}  we consider a penalization of problem \eqref{1.1}, in which the right-hand side does not depend on the term \(u\).
 For each penalization parameter, we ensure the existence of a unique solution to the penalized problem in the weighted Orlicz–Sobolev space, with estimates stable in this parameter, obtained via a topological approach (see Proposition \ref{penalyzedproblem}).
 Finally, through a stability process, we show that the penalized family of solutions converges to a function that solves problem \eqref{1.1} and enjoys the desired regularity estimates.

The ideas used to prove Theorem \ref{T2} are similar, replacing the estimates in Proposition \ref{penalyzedproblem} with those in  Corollary \ref{cor:hes-BMO}, doing the same penalization process.

\paragraph{Organization of the paper.}
The remainder of this manuscript is organized as follows. In Section~\ref{Section2}, we introduce the preliminary results concerning viscosity solutions and the functional spaces. In Section~\ref{Section3}, we present a proof of the weighted Orlicz--Sobolev regularity (Theorem~\ref{T1}). Section~\ref{Section4} is devoted to applications, where we derive global Morrey-type estimates. Finally, in Section~\ref{Section5} we prove the global $L^{\Upsilon}_{\omega}$--BMO regularity for the Hessian (Theorem \ref{T2}) and some consequences.

\section{Preliminary Framework}\label{Section2}

In this section, we  recall foundational definitions and preliminary results related to the viscosity solutions of the problem \eqref{1.1}, along with the important properties of the weighted Orlicz spaces.
\subsection{Viscosity Solutions}

We introduce the appropriate notions of viscosity solutions to 
\begin{equation}\label{2.1}
\left\{
\begin{array}{rclcl}
F(D^2u,Du,u,x) &=& h(x,u)& \mbox{in} &   \Omega \\
\mathcal{G}(Du,u,x)&=& g(x) &\mbox{on}& \partial\Omega,
\end{array}
\right.
\end{equation}
where $h:\Omega\times \mathbb{R}\to \mathbb{R}$ is a measurable function.
\begin{definition}[{\bf$L^{p}$-viscosity Solution}]\label{VSLp}
Let $F$ be a $(\lambda,\Lambda,a,b)$-elliptic operator, $p>\frac{n}{2}$ and $h\in L^{p}(\Omega\times \mathbb{R})$. Assume that $F$ is continuous in the first three variables, and measurable in $x$. A function $u\in C^{0}(\overline{\Omega})$ is said to be a $L^{p}$-viscosity solution of $\eqref{2.1}$ if the following assertions hold:
\begin{enumerate}
\item [a)] $u$ is a $L^{p}$-viscosity solution to equation
\[
F(D^{2}u,Du,u,x)=h(x,u)\quad\text{in}\quad \Omega.
\]
Precisely, for all $\phi\in W^{2,p}(\Omega)$ touching $u$ by above (resp. below)  at  $x_0 \in \Omega$
$$
F\big(D^2 \phi(x_{0}), D \phi(x_{0}), \phi(x_{0}), x_{0}\big)  \geq(\leq) h\big(x_0,\phi(x_{0})\big).
$$
\item [b)] $u$ satisfies the boundary condition in the viscosity sense, namely
\[
\mathcal{G}(Du,u,x)=g(x)\quad\text{on}\quad \partial\Omega.
\]
More precisely, for every $x_{0}\in \partial\Omega$ and every test function
$\phi\in C^{1}(\overline{\Omega})$ touching $u$ from above (resp. from below) at $x_{0}$ in the sense that
\[
u(x)\le \phi(x)\quad (\text{resp. }u(x)\ge \phi(x))
\quad\text{for all }x\in \overline{\Omega}\cap B_{r}(x_{0})
\]
for some $r>0$, we have
\[
\mathcal{G}\big(D\phi(x_{0}),\phi(x_{0}),x_{0}\big)\ge g(x_{0})
\quad
(\text{resp. }\le g(x_{0})).
\]
\end{enumerate}
\end{definition}
\begin{remark}
It is worth pointing out that Definition \ref{VSLp} is understood in a ``system'' sense.
More precisely, we require that $u$ satisfies the equation
\[
F(D^{2}u,Du,u,x)=h(x,u)
\quad\text{in}\quad \Omega
\]
in the $L^{p}$-viscosity sense, while the boundary condition
\[
\mathcal{G}(Du,u,x)=g(x)
\quad\text{on}\quad \partial\Omega
\]
is imposed in the classical viscosity sense, that is, by testing with smooth functions
touching $u$ at boundary points in the relative topology of $\overline{\Omega}$.
This is consistent with the fact that, in our results, we assume $p>n$, and thus
the Sobolev embedding yields $W^{2,p}\hookrightarrow C^{1,\alpha}$, for
$\alpha=1-\frac{n}{p}$, so that the gradient $D\phi(x_{0})$ is well-defined pointwise
and the expression $\mathcal{G}(D\phi(x_{0}),\phi(x_{0}),x_{0})$ is meaningful for any
boundary contact point $x_{0}\in \partial\Omega$.
Moreover, we refer the readers to \cite{CC,CCKS,Lieberman} for a more detailed exposition of the notion and properties on viscosity solutions.
\end{remark}
We also state 
 the stability result for viscosity solutions, whose proof can be found in  \cite[Theorem 3.8]{CCKS}. Such a result will be needed \textit{a posteriori}.
\begin{lemma}[{\bf Stability Lemma}]\label{stability}
For $k \in \mathbb{N}$, let $\Omega_k \subset \Omega_{k+1}$ be an increasing sequence of domains and $\displaystyle \Omega \defeq \bigcup_{k=1}^{\infty} \Omega_k$, $p > n$ and $F, F_k$ be $(\lambda, \Lambda, a, b)-$elliptic operators. Assume $f \in L^{p}(\Omega)$, $f_k \in L^p(\Omega_k)$, and that $u_k \in C^0(\Omega_k)$ is a $L^{p}-$viscosity sub-solutions (resp. super-solutions) of
$$
	F_k(D^2 u_k,Du_k,u_k,x)=f_k(x) \quad \textrm{in} \quad \Omega_k.
$$
Suppose that $u_k \to u_{\infty}$ locally uniformly in $\Omega$ and that for $\mathrm{B}_r(x_0) \subset \Omega$ and $\phi \in W^{2,p}(\mathrm{B}_r(x_0))$ we have
\begin{equation} \label{Est1}
	\|(\hat{g}-\hat{g}_k)^+\|_{L^p(\mathrm{B}_r(x_0))} \to 0 \quad \left(\textrm{resp.} \,\,\, \|(\hat{g}-\hat{g}_k)^-\|_{L^p(\mathrm{B}_r(x_0))} \to 0 \right),
\end{equation}
where $\hat{g}(x) \defeq F(D^2 \phi, D \phi, u,x)-f(x)$ and $\hat{g}_k(x) =  F_k(D^2 \phi, D \phi, u_{k},x)-f_k(x)$.  Then, $u$ is an $L^{p}-$viscosity sub-solution (resp. super-solution) of
$$
	F(D^2 u,Du,u,x)=f(x) \quad \textrm{in} \quad \Omega.
$$
In addition, if $F$ and $f$ are continuous,  $u$ is a $C^0-$viscosity sub-solution (resp. super-solution), provided that \eqref{Est1} holds for every  test function $\phi \in C^2(\mathrm{B}_r(x_0))$.
\end{lemma}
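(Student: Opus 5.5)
The Stability Lemma is quoted from \cite[Theorem 3.8]{CCKS}, so the natural route is to reproduce the argument of Caffarelli--Crandall--Kocan--\'Swi\k{e}ch. We treat the subsolution case; the supersolution case follows by symmetric changes.

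The proof is by contradiction. Suppose $u_\infty$ is not an $L^p$-viscosity subsolution. Then there exist a ball $\mathrm{B}_r(x_0)\subset\Omega$, a test function $\phi\in W^{2,p}(\mathrm{B}_r(x_0))$ and $\varepsilon>0$ such that $u_\infty-\phi$ has a local maximum at $x_0$ and
\[
F(D^2\phi,D\phi,u_\infty,x)-f(x)\le-\varepsilon \quad\text{a.e. in } \mathrm{B}_r(x_0).
\]
Adding $-\delta|x-x_0|^2$ to $\phi$ and shrinking $r$, we may assume the maximum is strict. Concretely, $u_\infty-\phi\le (u_\infty-\phi)(x_0)-\delta r^2/4$ on $\partial\mathrm{B}_r(x_0)$; the $\delta$-perturbation only changes the inequality by $O(\delta)$, so $\varepsilon$ survives after replacing it by $\varepsilon/2$.

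Next fix $k$ large so that $\mathrm{B}_r(x_0)\subset\Omega_k$. Set $w_k=u_k-\phi$. By uniform convergence, $w_k$ attains an interior maximum in $\mathrm{B}_r(x_0)$ exceeding its boundary values by at least $\delta r^2/8$. On $\mathrm{B}_r(x_0)$ we compute, using $(\lambda,\Lambda,a,b)$-ellipticity,
\[
F_k(D^2u_k,Du_k,u_k,x)-f_k \ge 0 \quad \text{(viscosity sense)}.
\]
Comparing with $\hat g_k=F_k(D^2\phi,D\phi,u_k,x)-f_k\le \hat g+(\hat g_k-\hat g)^+\le -\varepsilon/2+(\hat g-\hat g_k)^-$, the function $w_k$ is an $L^p$-viscosity subsolution of
\[
\mathcal{P}^+_{\lambda,\Lambda}(D^2w_k)+a|Dw_k|\ge -\hat g_k\ge \varepsilon/2-(\hat g_k-\hat g)^+ .
\]
This step uses the standard fact that the difference of a viscosity subsolution and a $W^{2,p}$ function is a subsolution of the Pucci inequality. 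The hypothesis \eqref{Est1} says exactly that $\|(\hat g-\hat g_k)^+\|_{L^p}\to0$; the sign convention in \eqref{Est1} should be matched to the one here, and that check is the delicate bookkeeping.

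Finally, apply the ABP maximum principle for $L^p$-viscosity subsolutions ($p>n$) to $w_k$ on $\mathrm{B}_r(x_0)$, shifted so that the interior maximum sits above the boundary values. The key inequality is
\[
\sup_{\mathrm{B}_r} w_k-\sup_{\partial\mathrm{B}_r}w_k\le C r\,\|(\varepsilon/2-(\hat g_k-\hat g)^+)^-\|_{L^n(\mathrm{B}_r\cap\{w_k=\Gamma\})}.
\]
The strict positivity $\varepsilon/2$ of the right-hand side forces a contradiction. A cleaner route is to build the comparison function $\psi_k$ solving $\mathcal{P}^-(D^2\psi_k)-a|D\psi_k|=(\hat g_k-\hat g)^+$ with $\|\psi_k\|_\infty\le C\|(\hat g_k-\hat g)^+\|_{L^p}\to0$, which exists by the $W^{2,p}$ solvability for Pucci equations. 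Then $w_k-\psi_k$ is a strict subsolution with a positive right-hand side $\varepsilon/2$, so it cannot attain an interior maximum at a point of twice differentiability. Combined with the strict maximum gap $\delta r^2/8$, which persists since $\psi_k\to0$ uniformly, this gives the contradiction.

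The main obstacle is this last step: justifying the interior-maximum contradiction for merely $W^{2,p}$ test functions. Touching points need not be points of twice differentiability, so one needs the ABP measure estimate on the contact set, or equivalently the $L^p$-viscosity maximum principle of \cite{CCKS}.

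For the continuous case, the same argument works with $C^2$ test functions. Classical viscosity inequalities then hold pointwise, and uniform convergence of $\hat g_k$ near $x_0$ replaces the $L^p$ bound.
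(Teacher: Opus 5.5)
Your strategy is the argument of \cite[Theorem 3.8]{CCKS}, which the paper only cites: negate the definition at a strict maximum of $u_\infty-\phi$, transfer the maximum to $u_k$ by uniform convergence, and absorb $\hat g_k-\hat g$. But the point you set aside as ``delicate bookkeeping'' is a genuine problem. Your own (correct) inequality $\hat g_k\le-\varepsilon/2+(\hat g-\hat g_k)^-$ shows that both of your routes need $\|(\hat g-\hat g_k)^-\|_{L^p(\mathrm{B}_r(x_0))}\to0$. In the subsolution case, \eqref{Est1} only provides $\|(\hat g-\hat g_k)^+\|_{L^p(\mathrm{B}_r(x_0))}\to0$, and nothing bridges the two.

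In fact, with the paper's conventions the subsolution case is false as written: $F$ is increasing in the matrix variable by $(A_1)$, and subsolutions satisfy $F\ge f$ as in Definition \ref{VSLp}. Take $\Omega_k=\Omega=\mathrm{B}_1$, $F=F_k=\operatorname{tr}$, $f\equiv0$, $f_k\equiv-1$, $u_k=u=-|x|^2/(2n)$. Each $u_k$ solves $\Delta u_k=f_k$ classically and $\hat g-\hat g_k\equiv-1$, so $(\hat g-\hat g_k)^+\equiv0$. Yet $u$ is not a subsolution of $\Delta u=0$ (test with $\phi=u$). The displayed signs are correct only under the opposite, degenerate-elliptic convention in which subsolutions satisfy $F\le f$. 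So you should state explicitly that you prove the lemma with the one-sided conditions interchanged. This is harmless for the paper: in the proof of Theorem \ref{T1} one has $|\hat g-\hat g_j|\le b|u-u_{\varepsilon_j}|+|\varphi_{u_{\varepsilon_j},\varepsilon_j}-\varphi_u|$, and full $L^p$ convergence of the right-hand side is what is verified there.

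Second, the step you call the main obstacle is not one, and your corrector has the wrong sign. Let $\tilde\psi_k\in W^{2,p}(\mathrm{B}_r(x_0))$ solve $\mathcal{P}^+_{\lambda,\Lambda}(D^2\tilde\psi_k)+a|D\tilde\psi_k|=-(\hat g_k-\hat g)^+$ in $\mathrm{B}_r(x_0)$ with $\tilde\psi_k=0$ on $\partial\mathrm{B}_r(x_0)$, so that $\|\tilde\psi_k\|_{L^\infty}\le C\|(\hat g_k-\hat g)^+\|_{L^p}\to0$. This is $-\psi_k$ in your notation, so the relevant function is $w_k+\psi_k$, not $w_k-\psi_k$. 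By $(A_1)$, a.e. in $\mathrm{B}_r(x_0)$,
\[
F_k\big(D^2(\phi+\tilde\psi_k),D(\phi+\tilde\psi_k),u_k,x\big)-f_k\le \hat g_k+\mathcal{P}^+_{\lambda,\Lambda}(D^2\tilde\psi_k)+a|D\tilde\psi_k|=\min\{\hat g_k,\hat g\}\le-\frac{\varepsilon}{2},
\]
while $u_k-(\phi+\tilde\psi_k)$ attains an interior maximum for $k$ large, by the strict gap. Since $\phi+\tilde\psi_k\in W^{2,p}$ is an admissible test function, this directly contradicts the definition of $L^p$-viscosity subsolution for $u_k$. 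No twice differentiability at the contact point and no contact-set ABP is needed; this is how \cite{CCKS} conclude. Your ABP route also works, but there the contradiction comes from the right-hand side tending to $0$ against the gap $\delta r^2/8$, not from the positivity of $\varepsilon/2$.

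Finally, in the continuous case no uniform convergence of $\hat g_k$ is available, since $f_k$ is only in $L^p$ and $F_k$ is only measurable in $x$. Instead, for $\phi\in C^2$, continuity of $F$, $f$ and $u$ turns $F(D^2\phi(x_0),D\phi(x_0),u(x_0),x_0)<f(x_0)$ into $\hat g\le-\varepsilon$ on a small ball. The same $L^p$ argument then runs with \eqref{Est1} for that $C^2$ test function. Also, to make the maximum strict you add $+\delta|x-x_0|^2$ to $\phi$, not $-\delta|x-x_0|^2$.
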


\subsection{Weighted Orlicz and BMO Spaces}\label{wos}

%Here we introduce the framework used throughout this manuscript, namely, the \textit{weighted Orlicz spaces}.

In order to introduce the \textit{weighted Orlicz spaces}, we begin by recalling the concept of an $N-$function.

\begin{definition}[{\bf $N$-function}]
A function $\Phi:[0,+\infty) \to [0,+\infty)$ is called an $N-$function if it is convex, increasing, and continuous, satisfies $\Phi(0)=0$, $\Phi(t)>0$ for all $t>0$, and
\begin{eqnarray*}
\lim_{t \to 0^{+}}\frac{\Phi(t)}{t}=0 \quad \text{and} \quad \lim_{t\to+\infty}\frac{\Phi(t)}{t}=+\infty.
\end{eqnarray*}
\end{definition}
In this context, we say that an $N-$function $\Phi$ satisfies the $\Delta_{2}$-condition (respectively, the $\nabla_{2}$-condition) if there exists a constant $\mathrm{C}_{1}>1$ (respectively, $\mathrm{C}_{2}>1$) such that
\begin{eqnarray*}
\Phi(2t)\leq \mathrm{C}_{1}\Phi(t) \quad \left(\text{respectively, } \Phi(t)\leq \frac{1}{2\mathrm{C}_{2}}\Phi(\mathrm{C}_{2}t)\right), \quad  t>0.
\end{eqnarray*}
We write $\Phi \in \Delta_{2}$ (respectively, $\Phi \in \nabla_{2}$) to denote that $\Phi$ satisfies the $\Delta_{2}$-condition (respectively, the $\nabla_{2}$-condition). If $\Phi$ satisfies both, we write $\Phi \in \Delta_{2} \cap \nabla_{2}$.
Given a $\Phi \in \Delta_{2} \cap \nabla_{2}$, we can define its \textit{lower index} by
\begin{eqnarray*}
i(\Phi)=\lim_{t \to 0^{+}}\frac{\log(\mathfrak{s}_{\Phi}(t))}{\log t} = \sup_{0<t<1}\frac{\log(\mathfrak{s}_{\Phi}(t))}{\log t},
\
\mbox{ where }
\
\mathfrak{s}_{\Phi}(t)=\sup_{s>0}\frac{\Phi(st)}{\Phi(s)}, \quad t>0.
\end{eqnarray*}
As illustrative examples, the functions $\Phi(t)=t^{q}$ and $\overline{\Phi}(t)=t^{q}\log(t+1)$, with $q>1$, are $N-$functions that satisfy both the $\Delta_{2}$- and $\nabla_{2}$-conditions, with $i(\Phi)=i(\overline{\Phi})=q>1$.\\

A function $\omega$ is called a \textit{weight} if it is nonnegative, locally integrable, and positive almost everywhere. In this case, we identify $\omega$ with the measure
\begin{eqnarray*}
\omega(\mathrm{U})=\int_{\mathrm{U}}\omega(x)\,dx,
\end{eqnarray*}
for every Lebesgue measurable set $U \subset \mathbb{R}^{n}$.
\begin{definition}[{\bf Muckenhoupt Class}]
We say that a weight $\omega$ belongs to the \textit{Muckenhoupt class} $\mathfrak{A}_{q}$, for some $q \in (1,\infty)$, and write $\omega \in \mathfrak{A}_{q}$, if
\begin{eqnarray*}
[\omega]_{q} \defeq \sup_{\mathrm{B} \subset \mathbb{R}^{n}} \left( \intav{B} \omega(x)\,dx \right) \left( \intav{B} \omega(x)^{\frac{-1}{q-1}}\,dx \right)^{q-1} < +\infty,
\end{eqnarray*}
where the supremum is taken over all balls $\mathrm{B} \subset \mathbb{R}^{n}$.
\end{definition}
\begin{remark}
The concept of the class of $\mathfrak{A}_{q}$-weights was introduced by Muckenhoupt in the mid-1970s in \cite{Muckenhoupt}. They characterize the class of weights for which classical operators—such as the Hardy–Littlewood maximal operator and singular integral operators—are bounded on weighted Lebesgue, Lorentz and Orlicz spaces. Such \(\mathfrak{A}_{q}\) weights are essential tools in the study of regularity theory, particularly in the context of elliptic and parabolic equations with nonuniform degeneracies or singularities, see \cite{KokiMiro} for more details.
\end{remark}

Finally, we are able to present the definition of the main functional space in this work.
\begin{definition}[{\bf Weighted Orlicz Spaces}]\label{weightedorliczspaces}
The \textit{weighted Orlicz space} $L^{\Phi}_{\omega}(\mathrm{U})$ for an $N-$function $\Phi\in \Delta_{2}\cap\nabla_{2}$, a Lebesgue measurable set $\mathrm{U}\subset \mathbb{R}^{n}$ and a weight $\omega$ is the space of all measurable functions $h$, defined over $\mathrm{U}$, such that
\begin{eqnarray*}
\rho_{\Phi,\omega}(h)=:\int_{\mathrm{U}}\Phi(|h(x)|)\omega(x)dx<+\infty.
\end{eqnarray*}
Due to the condition $\Phi\in\Delta_{2}\cap\nabla_{2}$, the \textit{weighted Orlicz space} $L^{\Phi}_{\omega}(\mathrm{U})$ is a reflexive Banach space, when endowed with the following Luxemburg norm
\begin{eqnarray*}
\|h\|_{L^{\Phi}_{\omega}(\mathrm{U})}=:\inf\left\{t>0:\rho_{\Phi,\omega}\left(\frac{h}{t}\right)\leq 1\right\}.
\end{eqnarray*}
The \textit{weighted Orlicz-Sobolev space} $W^{k,\Phi}_{\omega}(\mathrm{U})$, for an integer $k\geq0$, is the set of all measurable functions $h$ over $\mathrm{U}$ such that all derivatives $D^{\sigma}h$, in distributional sense, for any multiindex $\sigma$ with length $|\sigma|=0, 1,\cdots,k$  belong to $L^{\Phi}_{\omega}(\mathrm{U})$, whose norm is given by
\begin{eqnarray*}
\|h\|_{W^{k,\Phi}_{\omega}(\mathrm{U})}\defeq \sum_{|\sigma|\leq k }\|D^{\sigma}h\|_{L^{\Phi}_{\omega}(\mathrm{U})}.
\end{eqnarray*} 
\end{definition}

Notice that if $\Phi(t) = t^{p}$ for some $p \in (1, +\infty)$, then $L^{\Phi}_{\omega}(\mathrm{U})$ corresponds to the weighted Lebesgue space $L^{p}_{\omega}(\mathrm{U})$, and $W^{k,\Phi}_{\omega}(\mathrm{U})$ corresponds to the weighted Sobolev space $W^{k,p}_{\omega}(\mathrm{U})$, recovering the classical Lebesgue and Sobolev spaces when $\omega = 1$.
\begin{remark}
For an $N-$function $\Phi \in \Delta_{2} \cap \nabla_{2}$, there exist constants $p_{1}, p_{2}$ with $1 < p_{1} \leq p_{2} < +\infty$ such that
\begin{eqnarray}\label{propriedadedei(Phi)}
\bar{\mathrm{C}}^{-1}\min\{s^{p_{1}},s^{p_{2}}\}\Phi(t)\leq\Phi(st)\leq \bar{\mathrm{C}}\max\{s^{p_{1}},s^{p_{2}}\}\Phi(t), \ \forall t,s\geq 0,
\end{eqnarray}
where $\bar{\mathrm{C}} > 0$ is independent of $t$ and $s$, see \cite{KokiMiro}. Consequently, the following inclusions hold:
$$
L^{\infty}(\mathrm{U}) \subset L^{p_{2}}_{\omega}(\mathrm{U}) \subset L^{\Phi}_{\omega}(\mathrm{U}) \subset L^{p_{1}}_{\omega}(\mathrm{U}) \subset L^{1}(\mathrm{U}).
$$
Moreover, in view of the inequality \eqref{propriedadedei(Phi)}, the quantity $i(\Phi)$ can be characterized as the supremum of all constants $p_{1}$ such that it remains valid for every $s \geq 1$ (see \cite{fioreza} for further details). Consequently, we obtain $i(\Phi) > 1$, so that it is appropriate to consider the Muckenhoupt class $\mathfrak{A}_{i(\Phi)}$.
\end{remark}
Next, we introduce a useful property, needed later, that connects the norm with the modular in weighted Orlicz spaces on the unit ball; see \cite[Lemma 2.1.14]{DHHR11}.

\begin{lemma}[\bf Norm-modular unit ball property]\label{normmodular}
In the weighted Orlicz space \(L^{\Phi}_{\omega}(\mathrm{U})\),  the following equivalence holds frue:
\begin{equation*}
\rho_{\Phi,\omega}(h)\leq 1\Longleftrightarrow \|h\|_{L^{\Phi}_{\omega}(\mathrm{U})}\leq 1.
\end{equation*}
\end{lemma}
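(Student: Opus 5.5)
We show the two implications separately, working only with the definition of the Luxemburg norm and with elementary properties of the modular $\rho_{\Phi,\omega}$. The tools are convexity of $\Phi$ together with $\Phi(0)=0$, which give $\Phi(\theta t)\le \theta\Phi(t)$ for $\theta\in[0,1]$, and monotone (or dominated) convergence applied to the map $t\mapsto\rho_{\Phi,\omega}(h/t)$.

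\emph{($\Rightarrow$)} Suppose $\rho_{\Phi,\omega}(h)\le 1$. Then $t=1$ is admissible in the set $\{t>0:\rho_{\Phi,\omega}(h/t)\le 1\}$. Hence the infimum defining the Luxemburg norm is at most $1$, that is, $\|h\|_{L^{\Phi}_{\omega}(\mathrm{U})}\le 1$. This direction is immediate.

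\emph{($\Leftarrow$)} Suppose $\|h\|_{L^{\Phi}_{\omega}(\mathrm{U})}\le 1$.

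First, the admissible set is an up-ray. If $t$ is admissible and $s>t$, then $\Phi(|h|/s)=\Phi\big(\tfrac{t}{s}\,|h|/t\big)\le \tfrac{t}{s}\Phi(|h|/t)$ pointwise. Integrating against $\omega$ gives $\rho_{\Phi,\omega}(h/s)\le \tfrac ts\,\rho_{\Phi,\omega}(h/t)\le 1$, so $s$ is admissible too.

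Second, since the norm is at most $1$, every $t>1$ is admissible, i.e. $\rho_{\Phi,\omega}(h/t)\le 1$ for all $t>1$.

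Third, we let $t\downarrow 1$. Pointwise, $\Phi(|h(x)|/t)$ increases to $\Phi(|h(x)|)$, because $\Phi$ is increasing and continuous. By the monotone convergence theorem, $\rho_{\Phi,\omega}(h/t)\uparrow\rho_{\Phi,\omega}(h)$. Therefore $\rho_{\Phi,\omega}(h)\le 1$.

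The only step needing care is this limiting argument in the converse: we must pass to the limit inside the integral without already knowing that $\rho_{\Phi,\omega}(h)<\infty$. Monotone convergence handles this because it needs no integrability assumption. Note also that the whole argument uses only convexity, monotonicity and continuity of $\Phi$; the $\Delta_2$ condition is not needed here, although it guarantees finiteness of the modular throughout the space. The case $h=0$ a.e. is trivial, since then both sides of the equivalence hold.
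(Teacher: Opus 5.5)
Your proof is correct and complete. The forward direction follows directly from the definition of the Luxemburg norm. The converse correctly combines two facts: the admissible set is an up-ray, by convexity and $\Phi(0)=0$, and monotone convergence as $t\downarrow 1$ needs no a priori finiteness of $\rho_{\Phi,\omega}(h)$.

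The paper gives no proof of its own and only cites \cite[Lemma 2.1.14]{DHHR11}. That lemma is proved for left-continuous semimodulars using the same two ingredients in abstract form: monotonicity of $t\mapsto\rho(h/t)$ and left-continuity, which for this modular is your monotone convergence step. So your route is essentially the same, specialized to the weighted Orlicz modular.
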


Finally, the key point arising from the preceding definitions and results is that $L^{\Phi}_{\omega}(\Omega)$ admits a continuous embedding into a suitable Lebesgue space.
\begin{lemma}[{\bf \cite[Lemma 5]{BLOK}}]\label{mergulhoorliczlebesgue}
Let $\Phi$ be an $N-$function such that $\Phi\in\Delta_{2}\cap\nabla_{2}$, $\omega\in \mathfrak{A}_{i(\Phi)},$ and $\Omega\subset \mathbb{R}^{n}$ be a bounded domain.  Then, there exists $\bar{p}\in (1,i(\Phi))$ depending only on $i(\Phi)$ and $\omega$  such that $L^{\Phi}_{\omega}(\Omega)$ is continuously embedded in $L^{\bar{p}}(\Omega),$ and the following estimate holds
\begin{eqnarray*}
\|h\|_{L^{\bar{p}}(\Omega)}\leq \mathrm{C}\|h\|_{L^{\Phi}_{\omega}(\Omega)}, \ \forall \, h\in L^{\Phi}_{\omega}(\Omega),
\end{eqnarray*}
where $\mathrm{C}$ is a positive constant that depends only on \(n\), \(i(\Phi)\) and \(\omega\).
\end{lemma}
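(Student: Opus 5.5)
The argument combines Hölder's inequality with the $\mathfrak{A}_{q}$ condition to pass from $L^{\bar p}$ to the weighted modular, and then uses the growth bounds \eqref{propriedadedei(Phi)} to pass from the weighted modular to the Luxemburg norm. Since $\omega\in\mathfrak{A}_{i(\Phi)}$, the openness (reverse Hölder) property of Muckenhoupt classes gives some $q\in(1,i(\Phi))$, depending only on $i(\Phi)$ and $[\omega]_{i(\Phi)}$ (and $n$), with $\omega\in\mathfrak{A}_{q}$. We then choose $p_{1}$ in \eqref{propriedadedei(Phi)} with $q<p_{1}\le i(\Phi)$; this is possible because $i(\Phi)$ is the supremum of admissible lower exponents for $s\ge1$. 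Finally we set $\bar p:=p_{1}/q>1$. Enlarging $q$ slightly if necessary, we may also arrange $\bar p<i(\Phi)$.

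Next, by homogeneity of both sides it suffices to treat $\|h\|_{L^{\Phi}_{\omega}(\Omega)}=1$, so that $\rho_{\Phi,\omega}(h)\le 1$ by Lemma \ref{normmodular}. Write
\[
\int_\Omega |h|^{\bar p}\,dx=\int_\Omega |h|^{\bar p}\omega^{1/q}\,\omega^{-1/q}\,dx .
\]
Applying Hölder with exponents $q$ and $q'=q/(q-1)$ gives
\[
\int_\Omega |h|^{\bar p}\,dx\le \Big(\int_\Omega |h|^{p_{1}}\omega\,dx\Big)^{1/q}\Big(\int_\Omega \omega^{-1/(q-1)}\,dx\Big)^{(q-1)/q}.
\]
We cover $\Omega$ by a ball $B$. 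The second factor is bounded by $[\omega]_{q}^{1/q}|B|\,\omega(B)^{-1/q}$, a constant depending on $\omega$ and $\Omega$. (The statement lists only $n$, $i(\Phi)$, $\omega$; $\Omega$ enters through $B$, which is harmless.)

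For the first factor we split $\Omega$ into $\{|h|\le1\}$ and $\{|h|>1\}$. On $\{|h|>1\}$, \eqref{propriedadedei(Phi)} with $t=1$ and $s=|h|\ge1$ gives $|h|^{p_1}\le \bar{\mathrm C}\,\Phi(|h|)/\Phi(1)$, so that part contributes at most $\bar{\mathrm C}\rho_{\Phi,\omega}(h)/\Phi(1)\le \bar{\mathrm C}/\Phi(1)$. On $\{|h|\le 1\}$ the integrand is bounded by $\omega$, so that part contributes at most $\omega(\Omega)\le\omega(B)<\infty$, since $\omega$ is locally integrable. Hence $\|h\|_{L^{\bar p}}\le \mathrm C$, and rescaling yields the stated estimate.

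The main obstacle is the first step: guaranteeing a strict gap $q<p_{1}$ so that $\bar p>1$. This relies on the self-improvement (reverse Hölder) of $\mathfrak{A}_{i(\Phi)}$ weights, which we invoke as a standard result, and on matching the Orlicz lower index against it. The bounded region $\{|h|\le1\}$ is where boundedness of $\Omega$ is essential; if one wants the constant to be scale-free, normalized averages over $B$ should be used instead.
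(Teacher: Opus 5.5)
Your proposal is correct and is essentially the standard proof of \cite[Lemma 5]{BLOK}, which the paper quotes without proof. It uses openness of $\mathfrak{A}_{i(\Phi)}$ to get $\omega\in\mathfrak{A}_q$ with $q<i(\Phi)$, then the lower-index growth bound to get $L^{\Phi}_{\omega}(\Omega)\subset L^{p_1}_{\omega}(\Omega)$ for some $q<p_1<i(\Phi)$, then H\"older against $\omega^{-1/(q-1)}$ to get $L^{p_1}_{\omega}(\Omega)\subset L^{p_1/q}(\Omega)$. Only cosmetic remarks apply: choose $p_1<i(\Phi)$ strictly, since the supremum need not be attained; $\bar p=p_1/q<p_1<i(\Phi)$ holds automatically, so no adjustment of $q$ is needed; and, as you partly observed, the constant unavoidably depends on $\Omega$ and on $\Phi$ itself (through $\bar{\mathrm{C}}$ and $\Phi(1)$), not only on $i(\Phi)$.
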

From now on, we introduce the $BMO$-spaces, we begin defining the functions of bounded mean oscillation, $BMO$-functions, for short.
\begin{definition}
 A function $f \in L^1_{loc}(\Omega)$ is said to be a  $p-BMO$ function
 if
\[ \|f\|_{p-BMO(\Omega)} := \sup_{B\subset\Omega}
\left(\int_{B\cap\Omega}|f(x) - f_B|^p dx\right)^{\frac 1p}< +\infty\]
for every  ball $B \subset\Omega$, where
\[f_B:=\frac{1}{|B|}\int_B
f(x)dx.\] 
\end{definition}
As a consequence of the John–Nirenberg inequality,
such a semi-norm is equivalent to the one in the classical BMO spaces (see \cite[pp. 763-764]{pimentel}).
\begin{definition}[{\bf Weighted Orlicz-Sobolev BMO Spaces}]
A function $f \in L^1_{loc}(\Omega)$ is said to belong to the space $L^\Phi_\omega -\mathrm{BMO}(\Omega)$
 for $\Phi \in \Delta_2 \cap \nabla_2$, an $N$-function and a weight $\omega\in \mathfrak{A}_{i(\Phi)}$ if
\[\|f\|_{L^\Phi_\omega -\mathrm{BMO}(\Omega)} := \sup_{B\subset\Omega}
\frac{\|(f - f_B)\chi_B\|_{L^\phi_\omega(\Omega)}}
{\|\chi_B\|_{L^\Phi_\omega(\Omega)}}
< +\infty.\]
In particular, if $\Phi(t) = t^p$ for $p > 1$, we achieve the definition of the $p-\mathrm{BMO}$ spaces.
\end{definition}
\begin{remark}\label{Remark2.12}
Using the fact that $\Phi \in \Delta_2 \cap \nabla_2$ and assuming $\omega \in \mathfrak{A}_{i(\Phi)}$, it  follows
from \cite{LLO} that there exist  universal constants $0 < \mathrm{C}_1 \leq \mathrm{C}_2$ such that
\[\mathrm{C}_1\|f\|_{\mathrm{BMO}(\Omega)} \leq \|f\|_{L^{\Phi}_{\omega}-\mathrm{BMO}(\Omega)} \leq \mathrm{C}_2\|f\|_{\mathrm{BMO}(\Omega)},\ \mbox{for all } f \in L^1_{loc}(\Omega).\]
\end{remark}
%%%%%%%%%%%%%%%%%%%%%%%%%%%%%%%%%%%%%%%%%%%%%%%%%%%%%%%%%%%%%%%%%%%%%
\section{Weighted Orlicz-Sobolev Estimates}\label{Section3}

In this section, we present the proof of Theorem \ref{T1}. We begin by examining the existence of solutions to the associated penalized problem. For this purpose, we state the following result.
\begin{proposition}\label{penalyzedproblem}
Assume the structural conditions {\bf$(A_1)$}-{\bf $(A_5)$} and {\bf $(E_1)$}-{\bf$(E_2)$} are in force. For each \(\varepsilon>0\), there exists  \(u_{\varepsilon}\in W^{2,\Upsilon}_{\omega}(\Omega)\), for \(\Upsilon(t)=\Phi(t^{n})\), a viscosity solution of 
\begin{equation}\label{penprob}
\left\{
\begin{array}{rclcl}
 F(D^2u_{\varepsilon},Du_{\varepsilon},u_{\varepsilon},x) &=& f(x)+\varphi_{u_{\varepsilon},\varepsilon}(x)& \mbox{in} &   \Omega \\
\mathcal{G}(Du_{\varepsilon},u_{\varepsilon},x) &=& g(x) &\mbox{on}& \partial \Omega,
\end{array}
\right.
\end{equation}
where \(\varphi_{u,\varepsilon}\) is defined by
\[
\varphi_{u_{\varepsilon},\varepsilon}(x)=\varphi\left(\frac{1}{\varepsilon}\int_{0}^{\varepsilon}|\{z\in \Omega\ |\ u_{\varepsilon}(z)\geq u_{\varepsilon}(x)-t\}|dt\right).
\]
Furthermore, the following estimate holds
\[
\|u_{\varepsilon}\|_{W^{2,\Upsilon}_{\omega}(\Omega)}\leq \mathrm{C}(\|f\|_{L^{\Upsilon}_{\omega}(\Omega)}+\|\varphi_{u_{\varepsilon},\varepsilon}\|_{L^{\Upsilon}_{\omega}(\Omega)}+\|g\|_{C^{1,\alpha}(\partial \Omega)}),
\]
where \(\mathrm{C}>0\) depends only on \(n\), \(\lambda\), \(\Lambda\), \(a\), \(b\), \(\delta_{0}\), \(\bar{p}\), \(p_{2}\), \(\Phi\), \(\omega\), \(\bar{\theta}\), \(\alpha^{\prime}\), \(\|\beta\|_{C^{2}(\partial\Omega)}\), \(\|\gamma\|_{C^{2}(\partial \Omega)}\), \(\partial\Omega\) and \(\operatorname{diam}(\Omega)\).
\end{proposition}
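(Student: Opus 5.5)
We will obtain $u_\varepsilon$ as a fixed point of a solution map built on the known regularity theory for the problem without $u$-dependence, namely \eqref{obliqprob}. Concretely, we will use the weighted Orlicz result of \cite[Theorem 1.6]{Bessa24} together with the existence and uniqueness theory guaranteed by $(E_1)$--$(E_2)$ and $\gamma\le 0$.

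For $v\in C^0(\overline{\Omega})$ we set
\[
\varphi_{v,\varepsilon}(x)=\varphi\Big(\tfrac1\varepsilon\int_0^\varepsilon |\{z\in\Omega: v(z)\ge v(x)-t\}|\,dt\Big).
\]
The argument of $\varphi$ lies in $[0,|\Omega|]$, so $\|\varphi_{v,\varepsilon}\|_{L^\infty}\le \|\varphi\|_{L^\infty[0,|\Omega|]}=:M$, uniformly in $v$ and $\varepsilon$. The averaging in $t$ is the reason for the penalization. The map $s\mapsto \frac1\varepsilon\int_0^\varepsilon \mu_v(s-t)\,dt$, with $\mu_v(s)=|\{v\ge s\}|$ monotone, is Lipschitz in $s$ with constant $|\Omega|/\varepsilon$. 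Hence $x\mapsto\varphi_{v,\varepsilon}(x)$ is continuous whenever $v$ is.

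The first step is to show continuity of $v\mapsto \varphi_{v,\varepsilon}$ from $C^0(\overline\Omega)$ into $L^q(\Omega)$. Suppose $v_k\to v$ uniformly. Then $\mu_{v_k}(s)\to\mu_v(s)$ at every continuity point of $\mu_v$, which holds for all but countably many $s$. Dominated convergence in $t$ then gives pointwise convergence of the averaged quantities, and uniform continuity of $\varphi$ on $[0,|\Omega|]$ together with the bound $M$ gives convergence in every $L^q$ and in $L^\Upsilon_\omega$.

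Next we define $T:\mathcal{K}\to C^0(\overline\Omega)$ by letting $T(v)=w$ be the unique viscosity solution of
\[
F(D^2w,Dw,w,x)=f+\varphi_{v,\varepsilon}\ \text{in }\Omega,\qquad \mathcal G(Dw,w,x)=g\ \text{on }\partial\Omega .
\]
Since $f+\varphi_{v,\varepsilon}\in L^\Upsilon_\omega$ (because $L^\infty\subset L^\Upsilon_\omega$), \cite[Theorem 1.6]{Bessa24} applies and gives
\[
\|w\|_{W^{2,\Upsilon}_\omega}\le C\big(\|w\|_{L^\infty}+\|f\|_{L^\Upsilon_\omega}+\|\varphi_{v,\varepsilon}\|_{L^\Upsilon_\omega}+\|g\|_{C^{1,\alpha}}\big).
\]
To drop the $\|w\|_{L^\infty}$ term we use the ABP-type maximum principle for oblique problems with the proper structure $(E_2)$ and $\gamma\le0$. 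Through Lemma \ref{mergulhoorliczlebesgue}, since $|f|^n\in L^\Phi_\omega\subset L^{\bar p}$, we have $f\in L^{\bar p n}$ and
\[
\|w\|_{L^\infty}\le C(\|f+\varphi_{v,\varepsilon}\|_{L^{n}}+\|g\|_{L^\infty}).
\]
If the ABP route is awkward, we will instead use a compactness/contradiction argument based on uniqueness. Either way the resulting bound is independent of $v$.

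The map $T$ sends the convex closed set $\mathcal K=\{v:\|v\|_{L^\infty}\le R\}$, with $R$ chosen from the bound above, into itself. Moreover $T(\mathcal K)$ is bounded in $W^{2,\Upsilon}_\omega\subset W^{2,\bar p n}$ with $\bar pn>n$, hence bounded in $C^{1,1-1/\bar p}(\overline\Omega)$ and precompact in $C^0$. Continuity of $T$ follows from the first step, the stability Lemma \ref{stability} (including its boundary counterpart for the oblique condition), and uniqueness, which makes the whole sequence converge. Schauder's fixed point theorem then yields $u_\varepsilon=T(u_\varepsilon)$. Inserting $\|u_\varepsilon\|_{L^\infty}$ into the estimate gives the stated bound; note that $\|f\|_{L^\Upsilon_\omega}$ dominates the needed Lebesgue norm since $\Upsilon(t)=\Phi(t^n)$.

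The main obstacle is the uniform $L^\infty$ bound in the oblique setting with merely $\gamma\le0$. A properness constant $d>0$ in $(E_2)$ makes the comparison argument work, using barriers built from the $C^3$ boundary and $\beta\cdot\vec n\ge\delta_0$. We will first try to build an explicit barrier of the form $\psi(x)=A-B\,\mathrm{dist}$-type combined with $d$ to absorb constants, appealing to the comparison principle available from $(E_1)$--$(E_2)$.
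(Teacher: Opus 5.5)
Your proposal is correct and follows essentially the same route as the paper: freeze the penalized source $\varphi_{v,\varepsilon}$, solve the $u$-independent oblique problem via Perron's method and the weighted Orlicz estimates of \cite{Bessa24}, use $\|\varphi_{v,\varepsilon}\|_{L^\infty}\le\|\varphi\|_{L^\infty([0,|\Omega|])}$ to get a bound independent of $v$, and conclude by the compact embedding $W^{2,\Upsilon}_\omega\hookrightarrow W^{2,\bar p n}\hookrightarrow C^{1,1-1/\bar p}$ and Schauder's fixed point theorem. The differences are only in the details. The paper works in $C^{0,1}(\Omega)$ and quotes a version of the estimate (\cite[Theorem 3.5]{Bessa24}) that already omits the $\|w\|_{L^\infty}$ term. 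You instead work in $C^0(\overline\Omega)$ and supply two steps the paper leaves implicit: the continuity of $v\mapsto T(v)$, via your Lipschitz-in-$s$ observation, stability and uniqueness; and the uniform $L^\infty$ bound, whose constant must then depend on the properness constant $d$.
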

\begin{proof}
We proceed by applying a penalization method. Given $v \in C^{0,1}(\Omega)$, we note that the function $\varphi_{v,\varepsilon}$ is defined by
\[
\varphi_{v,\varepsilon}(x)=\varphi\left(\frac{1}{\varepsilon}\int_{0}^{\varepsilon}|\{z\in \Omega\ |\ v(z)\geq v(x)-t\}|dt\right),
\]
and belongs to \(L^{\infty}(\Omega)\) satisfying 
\(
\|\varphi_{v,\varepsilon}\|_{L^{\infty}(\Omega)}\leq \|\varphi\|_{L^{\infty}([0,|\Omega|])}.
\)
Consequently, we have that \(\varphi_{v,\varepsilon}\in L^{\Upsilon}_{\omega}(\Omega)\) and
\begin{eqnarray}\label{est1prop3.1}
\int_{\Omega}\Upsilon(|\varphi_{v,\varepsilon}(x)|)\omega(x)dx\leq \Upsilon(\|\varphi\|_{L^{\infty}([0,|\Omega|])})\omega(\Omega).
\end{eqnarray}
Under the structural assumptions, we may apply Perron's method \cite[Theorem 7.19]{Lieberman}, combined with the global weighted Orlicz estimates for fully nonlinear models with oblique boundary condition  \cite[Theorem 3.5]{Bessa24}, to ensure the existence of a unique solution $v_{\varepsilon} \in W^{2,\Upsilon}_{\omega}(\Omega)$ of
\begin{equation*}
\left\{
\begin{array}{rclcl}
 F(D^2v_{\varepsilon},Dv_{\varepsilon},v_{\varepsilon},x) &=& f(x)+\varphi_{v,\varepsilon}(x)& \mbox{in} &   \Omega \\
\mathcal{G}(Dv_{\varepsilon},v_{\varepsilon},x) &=& g(x) &\mbox{on}& \partial \Omega,
\end{array}
\right.
\end{equation*}
satisfying the global estimate
\begin{eqnarray}\label{est2prop3.1}
\|v_{\varepsilon}\|_{W^{2,\Upsilon}_{\omega}(\Omega)}\leq \mathrm{C}(\|f\|_{L^{\Upsilon}_{\omega}(\Omega)}+\|\varphi_{v,\varepsilon}\|_{L^{\Upsilon}_{\omega}(\Omega)}+\|g\|_{C^{1,\alpha}(\partial \Omega)}),
\end{eqnarray}
where \(\mathrm{C}>0\) depends only on \(n\), \(\lambda\), \(\Lambda\), \(a\), \(b\), \(\delta_{0}\), \(\bar{p}\), \(p_{2}\), \(\Phi\), \(\omega\), \(\bar{\theta}\), \(\alpha^{\prime}\), \(\|\beta\|_{C^{2}(\partial\Omega)}\), \(\|\gamma\|_{C^{2}(\partial \Omega)}\), \(\partial \Omega \) and \(\operatorname{diam}(\Omega)\). 

In order to estimate the norm \(\|\varphi_{v,\varepsilon}\|_{L^{\Upsilon}_{\omega}(\Omega)}\), since \(\rho_{\Upsilon,\omega}\) is a modular, it follows that
\[
\rho_{\Upsilon,\omega}\left(\frac{|\varphi_{v,\varepsilon}|}{1+\rho_{\Upsilon,\omega}(|\varphi_{v,\varepsilon}|)}\right)\leq \frac{\rho_{\Upsilon,\omega}(|\varphi_{v,\varepsilon}|)}{1+\rho_{\Upsilon,\omega}(|\varphi_{v,\varepsilon}|)} \leq 1.
\]
So, by the norm-modular unit ball property Lemma \ref{normmodular}, and the positivity homogeneity of the norm \(\|\cdot\|_{L^{\Upsilon}_{\omega}(\Omega)}\) we may conclude that
\begin{eqnarray}\label{vphiest}
\left\|\frac{\varphi_{v,\varepsilon}}{1+\rho_{\Upsilon,\omega}(\varphi_{v,\varepsilon})}\right\|_{L^{\Upsilon}_{\omega}(\Omega)}\leq 1 \Longrightarrow \|\varphi_{v,\varepsilon}\|_{L^{\Upsilon}_{\omega}(\Omega)}\leq 1+\rho_{\Upsilon,\omega}(\varphi_{v,\varepsilon})\stackrel{\eqref{est1prop3.1}}{\leq} 1+\Phi(\|\varphi\|_{L^{\infty}([0,|\Omega|])}^{n})\omega(\Omega).
\end{eqnarray}
Therefore, by using \eqref{vphiest} in the estimate \eqref{est2prop3.1}, we deduce that\begin{eqnarray}\label{est3prop3.1}
\|v_{\varepsilon}\|_{W^{2,\Upsilon}_{\omega}(\Omega)}\leq \mathrm{C}(1+\|f\|_{L^{\Upsilon}_{\omega}(\Omega)}+\Phi(\|\varphi\|_{L^{\infty}([0,|\Omega|])}^{n})\omega(\Omega)+\|g\|_{C^{1,\alpha}(\partial \Omega)})\defeq \mathrm{C}_{0}.
\end{eqnarray}
In other words, the operator $\mathcal{T}: C^{0,1}(\Omega) \to W^{2,\Upsilon}_{\omega}(\Omega) \subset C^{0,1}(\Omega)$, given by $\mathcal{T}v = v_{\varepsilon}$, is well defined and maps closed balls into themselves in the $C^{0,1}(\Omega)$-topology. In fact,  the $C^{0,1}(\Omega)$-norm can be estimated by the $W^{2,\Upsilon}_{\omega}(\Omega)$-norm via the embedding given by Lemma \ref{mergulhoorliczlebesgue}, combined with the Sobolev embedding, resulting the following chain of embeddings
\[W^{2,\Upsilon}_{\omega}(\Omega)\hookrightarrow W^{2,p}(\Omega)\hookrightarrow C^{1,1-\frac{n}{p}}(\Omega)\hookrightarrow C^{0,1}(\Omega). \]
Thus, \(\mathcal{T}\) is a compact operator, and using the Schauder's fixed point theorem, cf. \cite[pp. 179]{Brezis}, there exists a function \(u_{\varepsilon}\in W^{2,\Upsilon}_{\omega}(\Omega)\) such that \(\mathcal{T}u_{\varepsilon}=u_{\varepsilon}\), i.e., \(u_{\varepsilon}\) is a solution of \eqref{penprob} and satisfies the estimate \eqref{est2prop3.1}.
\end{proof}

We are now able to prove Theorem \ref{T1}.
\begin{proof}[{\bf Proof of Theorem \ref{T1}}]
Initially, from Proposition \ref{penalyzedproblem}, for each \(\varepsilon>0\), we can consider \(u_{\varepsilon}\in W^{2,\Upsilon}_{\omega}(\Omega)\), which is a solution of
\begin{equation*}
\left\{
\begin{array}{rclcl}
 F(D^2u_{\varepsilon},Du_{\varepsilon},u_{\varepsilon},x) &=& f(x)+\varphi_{u,\varepsilon}(x)& \mbox{in} &   \Omega \\
\mathcal{G}(Du_{\varepsilon},u_{\varepsilon},x) &=& g(x) &\mbox{on}& \partial \Omega,
\end{array}
\right.
\end{equation*}
where \(u_{\varepsilon}\) satisfies the following estimate
\begin{equation}\label{est1teo1.2}
\|u_{\varepsilon}\|_{W^{2,\Upsilon}_{\omega}(\Omega)}\leq \mathrm{C}(\|f\|_{L^{\Upsilon}_{\omega}(\Omega)}+\|\varphi_{u_{\varepsilon},\varepsilon}\|_{L^{\Upsilon}_{\omega}(\Omega)}+\|g\|_{C^{1,\alpha}(\partial \Omega)}),
\end{equation}
for a universal constant \(\mathrm{C}>0\). As in Proposition \ref{penalyzedproblem}, we can estimate the term $\|\varphi_{u_{\varepsilon},\varepsilon}\|_{L^{\Upsilon}_{\omega}(\Omega)}$ and obtain the following boundedness
\begin{eqnarray*}
\|u_{\varepsilon}\|_{W^{2,\Upsilon}_{\omega}(\Omega)}\leq \mathrm{C}(1+\|f\|_{L^{\Upsilon}_{\omega}(\Omega)}+\Phi(\|\varphi\|_{L^{\infty}([0,|\Omega|])}^{n})\omega(\Omega)+\|g\|_{C^{1,\alpha}(\partial \Omega)}) = \mathrm{C}_{0},
\end{eqnarray*}
where $\mathrm{C}_{0}>0$, as defined in \eqref{est3prop3.1}, does not depend on $\varepsilon$ and $u_{\varepsilon}$. In other words, the family $(u_{\varepsilon})_{\varepsilon>0}$ is bounded in $W^{2,\Upsilon}_{\omega}(\Omega)$, and since this space is reflexive, there exist a subsequence $(u_{\varepsilon_{j}})_{j\in\mathbb{N}}$ with $\varepsilon_{j}\to0$ as $j\to \infty$, and a function $u_{\infty}\in W^{2,\Upsilon}_{\omega}(\Omega)$ such that $u_{\varepsilon_{j}}\rightharpoonup u_{\infty}$ weakly in $W^{2,\Upsilon}_{\omega}(\Omega)$. Furthermore, by applying the suitable Sobolev’s embedding together with Lemma \ref{mergulhoorliczlebesgue}, we obtain, possibly after passing to a subsequence, that $u_{\varepsilon_{j}} \to u_{\infty}$ in $C^{1,1-\frac{n}{p}}(\overline{\Omega})$. In addition, due to this convergence, the oblique boundary condition for $u_{\infty}$ is satisfied pointwisely, namely, 
\[
\mathcal{G}(Du_{\varepsilon_{j}},u_{\varepsilon_{j}},x)\to \mathcal{G}(Du_{\infty},u_{\infty},x)\,\, \text{on}\,\, \partial \Omega.
\]
Thus, to apply the stability result given by Lemma \ref{stability}, it remains to verify that 
$$\varphi_{u_{\varepsilon_{j}},\varepsilon_{j}} \to \varphi(|\{z \in \Omega \mid u_{\infty}(z) \geq u_{\infty}(x)\}|) \defeq \varphi_{u_{\infty}}(x)\,\, \text{ in }\,\, L^{p}(\Omega)\,\, \text{ as }\,\,j \to \infty.$$
In effect, by the uniform convergence \(u_{\varepsilon_{j}}\to u_{\infty}\) in \(C^{1,\alpha}\)- norm, then we may apply the Lebesgue differentiation Theorem, cf. \cite[pp. 104]{SteinShak05}, to conclude that
\(
\varphi_{u_{\varepsilon_{j}},\varepsilon_{j}}(x) \to  \varphi_{u_{\infty}}(x) \,\, \text{in }\,\, \Omega.
\)
Moreover, using the uniform continuity of $\varphi$, and applying the Lebesgue dominated convergence theorem  \cite [pp. 67]{SteinShak05},  the desired convergence $\varphi_{u_{\varepsilon_{j}},\varepsilon_{j}} \to \varphi_{u_{\infty}}$ in the $L^{p}$-norm follows. 

Therefore, we are able to apply the stability result and conclude that $u = u_{\infty}$ is a viscosity solution of
\begin{equation*}
\left\{
\begin{array}{rclcl}
 F(D^2u,Du,u,x) &=& f(x)+\varphi_{u}(x)& \mbox{in} &   \Omega \\
\mathcal{G}(Du,u,x) &=& g(x) &\mbox{on}& \partial \Omega.
\end{array}
\right.
\end{equation*}
The weak convergence $u_{\varepsilon_{j}} \rightharpoonup u$ in $W^{2,\Upsilon}_{\omega}(\Omega)$ and the estimate \eqref{est1teo1.2} imply that
\begin{equation}\label{est2teo1.2}
\|u\|_{W^{2,\Upsilon}_{\omega}(\Omega)}\leq \liminf_{j\to \infty}\|u_{\varepsilon_{j}}\|_{W^{2,\Upsilon}_{\omega}(\Omega)}\leq \mathrm{C}(\|f\|_{L^{\Upsilon}_{\omega}(\Omega)}+\liminf_{j\to \infty}\|\varphi_{u_{\varepsilon_{j}},\varepsilon_{j}}\|_{L^{\Upsilon}_{\omega}(\Omega)}+\|g\|_{C^{1,\alpha}(\partial \Omega)}).
\end{equation}
Finally, applying  the same argument used to establish the convergence $\varphi_{u_{\varepsilon_{j}},\varepsilon_{j}} \to \varphi_{u}$ in  $L^{p}(\Omega)$, we obtain the convergence of the modular $\rho_{\Upsilon,\omega}(\varphi_{u_{\varepsilon_{j}},\varepsilon_{j}} - \varphi_{u}) \to 0$, which implies that $\varphi_{u_{\varepsilon_{j}},\varepsilon_{j}} \to \varphi_{u}$ in $L^{\Upsilon}_{\omega}(\Omega)$. Indeed, given $\eta_{0}>0$, by the convergence of the modular, it follows that there exists $j_{0} \in \mathbb{N}$ such that
$$
\int_{\Omega}\Upsilon(|\varphi_{u_{\varepsilon_{j}},\varepsilon_{j}}(x)-\varphi_{u}(x)|)\omega(x)\,dx<\mathrm{C}_{\eta_{0}}^{-1}, \quad \forall \ j\geq j_{0},
$$
where $\mathrm{C}_{\eta_{0}}=\overline{\mathrm{C}}\max\{(2^{-1}\eta_{0})^{-p_{1}n},(2^{-1}\eta_{0})^{-p_{2}n}\}$, for some $\overline{\mathrm{C}}>0$, and $1<p_{1}\leq p_{2}<+\infty$ are the constants given in \eqref{propriedadedei(Phi)}. Consequently,
$$
\displaystyle\int_{\Omega}\Upsilon\left(\frac{|\varphi_{u_{\varepsilon_{j}},\varepsilon_{j}}(x)-\varphi_{u}(x)|}{2^{-1}\eta_{0}}\right)\omega(x)\,dx \leq \mathrm{C}_{\eta_{0}}\int_{\Omega}\Upsilon(|\varphi_{u_{\varepsilon_{j}},\varepsilon_{j}}(x)-\varphi_{u}(x)|)\omega(x)\,dx < 1,
$$
for all $j\geq j_{0}$. Thus, the definition of the weighted Orlicz norm $\|\cdot\|_{L^{\Upsilon}_{\omega}(\Omega)}$ implies that

$$
\|\varphi_{u_{\varepsilon_{j}},\varepsilon_{j}} - \varphi_{u}\|_{L^{\Upsilon}_{\omega}(\Omega)} \leq \frac{\eta_{0}}{2} < \eta_{0}, \quad \forall \ j \geq j_{0},
$$
proving the desired convergence. Therefore, in \eqref{est2teo1.2} we have that
\begin{equation*}
\|u\|_{W^{2,\Upsilon}_{\omega}(\Omega)}\leq \mathrm{C}(\|f\|_{L^{\Upsilon}_{\omega}(\Omega)}+\|\varphi_{u}\|_{L^{\Upsilon}_{\omega}(\Omega)}+\|g\|_{C^{1,\alpha}(\partial \Omega)}),
\end{equation*}
 completing the proof.
\end{proof}

\begin{remark}
By taking $\varphi = 0$, we recover the weighted Orlicz estimates obtained in \cite[Theorem 3.5]{Bessa24}.
\end{remark}

Our regularity result in Theorem \ref{T1} can be applied straightway in the context of classical weighted Lebesgue spaces. Summarizing, we have the following result.
\begin{theorem}\label{weightedLqestimates}
Assume the structural conditions {\bf $(A_1)$}-{\bf $(A_5)$} are in force, considering in {\bf$(A_2)$} that \(f\) belongs only to \(L^{q}_{\omega}(\Omega)\) for \(n<q<+\infty\). Suppose that {\bf $(E_1)$}-{\bf$(E_2)$} hold true and that \(\partial\Omega\in C^{3}\). Then, there exists \(u\in W^{2,q}_{\omega}(\Omega)\), a \(L^{q}\) - viscosity solution of \eqref{1.1} satisfying the following estimate
\begin{eqnarray}\label{estLq}
\|u\|_{W^{2,q}_{\omega}(\Omega)}\leq \mathrm{C}(\|f\|_{L^{q}_{\omega}(\Omega)}+\|\varphi_{u}\|_{L^{q}_{\omega}(\Omega)}+\|g\|_{C^{1,\alpha}(\partial \Omega)}),
\end{eqnarray}
where \(\mathrm{C}>0\) is a constant that depends only on \(n\), \(\lambda\), \(\Lambda\), \(a\), \(b\), \(q\), \(\omega\), \(\delta_0\), \(\bar{\theta}\), \(\alpha^{\prime}\), \(\|\beta\|_{C^{2}(\partial \Omega)}\), \(\|\gamma\|_{C^{2}(\partial \Omega)}\) and \(\|\partial \Omega\|_{C^{2,\alpha}}\).
\end{theorem}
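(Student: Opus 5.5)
The plan is to obtain Theorem \ref{weightedLqestimates} as the special case of Theorem \ref{T1} with a power function. Fix $n<q<+\infty$ and set $\Phi(t)=t^{q/n}$. Since $q/n>1$, $\Phi$ is an $N$-function in $\Delta_{2}\cap\nabla_{2}$ with $i(\Phi)=q/n$. Moreover, $\Upsilon(t)=\Phi(t^{n})=t^{q}$. This gives $L^{\Upsilon}_{\omega}(\Omega)=L^{q}_{\omega}(\Omega)$ and $W^{2,\Upsilon}_{\omega}(\Omega)=W^{2,q}_{\omega}(\Omega)$, with the Luxemburg norms coinciding with the usual weighted $L^{q}$ norms. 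The hypothesis $|f|^{n}\in L^{\Phi}_{\omega}(\Omega)$ in $(A_2)$ is then exactly $f\in L^{q}_{\omega}(\Omega)$. The weight should be taken in $\mathfrak{A}_{q/n}$, and I read the statement's assumption on $\omega$ in this sense.

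With this choice, all hypotheses of Theorem \ref{T1} hold: $(A_1)$, $(A_3)$--$(A_5)$, $(E_1)$--$(E_2)$ and $\partial\Omega\in C^{3}$ are assumed verbatim. Theorem \ref{T1} then gives an $L^{p}$-viscosity solution $u\in W^{2,q}_{\omega}(\Omega)$ with $p=\bar p n$, and the estimate of Theorem \ref{T1} reads exactly as \eqref{estLq}. For this $\Phi$, the constants $p_{1}=p_{2}=q/n$ in \eqref{propriedadedei(Phi)} are determined by $q$, so the dependence of $\mathrm{C}$ on $\Phi$ and $p_{2}$ reduces to dependence on $q$. The constant $\bar p$ of Lemma \ref{mergulhoorliczlebesgue} depends only on $q$ and $\omega$.

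The remaining step, which I expect to be the only real issue, is upgrading ``$L^{p}$-viscosity with $p=\bar p n$'' to ``$L^{q}$-viscosity''. Here $\bar p\in(1,q/n)$, so $p<q$, and the two notions are not a priori comparable. I would argue as follows.
\begin{enumerate}
\item Since $u\in W^{2,q}_{\omega}(\Omega)\hookrightarrow W^{2,p}(\Omega)$ with $p>n$, $u$ is twice differentiable a.e. and satisfies the equation pointwise a.e. This follows from the standard fact that $W^{2,p}$ $L^{p}$-viscosity solutions are strong solutions \cite{CCKS}.
\item Conversely, a strong $W^{2,p}_{\mathrm{loc}}$ solution with $p>n$ is an $L^{r}$-viscosity solution for every $r\geq p$, again by \cite{CCKS}. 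This covers $r=q$, because the right-hand side $f+\varphi_{u}$ lies in $L^{q}(\Omega)$. Indeed, $\varphi_{u}$ is bounded, and for the unweighted integrability of $f$ one uses either the embedding of Lemma \ref{mergulhoorliczlebesgue} or the assumption $f\in L^{q}$ implicit in the $L^{q}$-viscosity definition.
\end{enumerate}
Alternatively, one could rerun the proof of Theorem \ref{T1} directly in the $L^{q}$ framework. In that proof the stability Lemma \ref{stability} would be applied with exponent $q$, using the $L^{q}$ convergence of $\varphi_{u_{\varepsilon_j},\varepsilon_j}$, which follows by dominated convergence exactly as before.
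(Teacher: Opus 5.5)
Your reduction is the paper's proof. You take $\Phi(t)=t^{q/n}$, so $\Upsilon(t)=t^{q}$, and apply Theorem~\ref{T1} with $\omega\in\mathfrak{A}_{q/n}$, which is the right reading of $(A_2)$. You then identify the Luxemburg norm with the usual $L^{q}_{\omega}$ norm to get the estimate.

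You overcomplicate the last step. Your claim that the two viscosity notions are ``not a priori comparable'' is wrong in the direction you need. Since $p=\bar p n<q$, we have $W^{2,q}(\Omega)\subset W^{2,p}(\Omega)$, so every $L^{q}$ test function is already an $L^{p}$ test function. Hence an $L^{p}$-viscosity solution is automatically an $L^{q}$-viscosity solution, and this is exactly the paper's ``inclusion of Lebesgue spaces''. Your detour through strong solutions is valid but unnecessary, and in passing from $r=p$ to $r\geq p$ it uses this same inclusion anyway.

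One justification you offer is incorrect. Lemma~\ref{mergulhoorliczlebesgue} only gives $f\in L^{\bar p n}(\Omega)$, not $f\in L^{q}(\Omega)$. So the unweighted $L^{q}$-integrability of the right-hand side, needed for the $L^{q}$-viscosity notion to make sense, must be taken as an extra assumption. The paper's proof also leaves this point implicit.
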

\begin{proof}
Consider the $N$-function $\Phi(t) = t^{\frac{q}{n}}$. Clearly, $\Phi$ satisfies the $\Delta_{2} \cap \nabla_{2}$ condition, with $i(\Phi) = \frac{q}{n}$, and we have $|f|^{n} \in L^{\Phi}_{\omega}(\Omega)$. Therefore, we can apply Theorem \ref{T1} to conclude the existence of a viscosity solution $u \in W^{2,\Upsilon}_{\omega}(\Omega)$ to problem \eqref{1.1}, where \(\Upsilon(t)=\Phi(t^{n})\), and such that $u$ is an $L^{\bar{p}n}$-viscosity solution. Moreover, the following estimate holds:
\begin{eqnarray}\label{est1doteo3.3}
\|u\|_{W^{2,\Upsilon}_{\omega}(\Omega)}\leq \mathrm{C}(\|f\|_{L^{\Upsilon}_{\omega}(\Omega)}+\|\varphi_{u}\|_{L^{\Upsilon}_{\omega}(\Omega)}+\|g\|_{C^{1,\alpha}(\partial \Omega)}).
\end{eqnarray}
Now, by the definition of $\Phi$, we have $\Upsilon(t) = t^{q}$. Hence, by the equivalence of the norms in $L^{q}_{\omega}(\Omega)$ (the usual norm and the Luxemburg norm), estimate \eqref{est1doteo3.3} implies estimate \eqref{estLq}. Finally, by the inclusion of Lebesgue spaces, we conclude that $u$ is an $L^{q}$-viscosity solution of problem \eqref{1.1}. This completes the proof.
\end{proof}

As a immediate consequence, we have the Hölder regularity for the gradient of solutions to problem \eqref{1.1} via the Sobolev embedding theorem.
\begin{corollary}
Under the conditions of Theorem \ref{weightedLqestimates} with \(\omega\equiv 1\), the solutions to problem \eqref{1.1} belong to \(C^{1,\alpha}(\overline{\Omega})\) for any \(0<\alpha<1\),  provided that \(f\in L^{q}(\Omega)\), for all \(n<q<+\infty\).
\end{corollary}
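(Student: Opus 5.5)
The corollary will follow from Theorem \ref{weightedLqestimates} with the constant weight and the Sobolev embedding. First I will check that $\omega\equiv 1$ is admissible. For every $q>1$, $[\omega]_{q}=1$, so $\omega\in\mathfrak{A}_{q/n}$ whenever $q>n$. In that case $L^{q}_{\omega}(\Omega)=L^{q}(\Omega)$ and $W^{2,q}_{\omega}(\Omega)=W^{2,q}(\Omega)$ with identical norms. The hypotheses of Theorem \ref{weightedLqestimates} therefore reduce to $f\in L^{q}(\Omega)$, and the theorem gives an $L^{q}$-viscosity solution $u\in W^{2,q}(\Omega)$ of \eqref{1.1} satisfying \eqref{estLq}.

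Next I will fix $\alpha\in(0,1)$ and choose $q$ so that $\alpha\le 1-\frac{n}{q}$, that is, $q\ge \frac{n}{1-\alpha}>n$. This choice is possible because $f\in L^{q}(\Omega)$ is assumed for every $q\in(n,+\infty)$. Since $\partial\Omega\in C^{3}$, the domain is in particular Lipschitz, so the Morrey--Sobolev embedding $W^{2,q}(\Omega)\hookrightarrow C^{1,1-\frac{n}{q}}(\overline{\Omega})$ holds. The inclusion $C^{1,1-\frac nq}(\overline{\Omega})\subset C^{1,\alpha}(\overline{\Omega})$ on a bounded domain then gives $u\in C^{1,\alpha}(\overline{\Omega})$, with
\[
\|u\|_{C^{1,\alpha}(\overline{\Omega})}\le \mathrm{C}\big(\|f\|_{L^{q}(\Omega)}+\|\varphi_u\|_{L^{q}(\Omega)}+\|g\|_{C^{1,\alpha}(\partial\Omega)}\big).
\]
The term $\|\varphi_u\|_{L^{q}(\Omega)}$ is finite, since it is bounded by $\|\varphi\|_{L^\infty([0,|\Omega|])}|\Omega|^{1/q}$.

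The only delicate point is the order of quantifiers. The existence statement produces a solution for each $q$, and these solutions might a priori differ, because uniqueness for \eqref{1.1} is not available. I will handle this in one of two ways:
\begin{enumerate}
\item \emph{Fix $\alpha$ first.} Interpret the corollary as saying that for each $\alpha$ there is a solution in $C^{1,\alpha}(\overline{\Omega})$, obtained from the $q$ chosen above.
\item \emph{Bootstrap a single solution.} Take the solution constructed for some $q_0>n$. Then $u\in C^{1}$ and the right-hand side $f+\varphi_u$ lies in $L^{q}$ for every $q$. The $W^{2,q}$ estimate for the frozen problem, \cite[Theorem 3.5]{Bessa24} with $\varphi$ fixed as a given datum, together with the uniqueness coming from $(E_2)$ and the comparison principle, puts this same $u$ in $W^{2,q}$ for all $q<\infty$.
\end{enumerate}
I expect the second route to be the preferred one, since it yields a single solution with $u\in C^{1,\alpha}(\overline{\Omega})$ simultaneously for all $\alpha\in(0,1)$.
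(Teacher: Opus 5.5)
Your proposal is correct and follows the paper's argument: Theorem \ref{weightedLqestimates} with $\omega\equiv 1$ (so $L^q_\omega=L^q$ and $W^{2,q}_\omega=W^{2,q}$), then the Sobolev embedding $W^{2,q}(\Omega)\hookrightarrow C^{1,1-\frac{n}{q}}(\overline{\Omega})$ with $q$ taken large. Your treatment of the order of quantifiers addresses a point the paper leaves implicit, namely identifying the constructed solution with the unique $W^{2,q}$ solution of the frozen problem whose right-hand side is $f+\varphi_u$. One small correction: in your displayed estimate, the norm of $g$ should carry the fixed exponent from $(A_2)$, not the arbitrary $\alpha$.
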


\begin{remark}
The weighted estimate obtained in Theorem \ref{weightedLqestimates} extends that derived in \cite{Bessa}, by simply taking $\omega\equiv 1$ and $\varphi=0$.
\end{remark}

\section{An Application: Morrey Estimates}\label{Section4}

In this section, we present an application of the weighted estimates established in the previous section. 
More precisely, we derive Morrey-type bounds for the solutions of problem \eqref{1.1} in the case $\gamma=g=0$. 
This provides a refined local control of the solution in terms of scale-invariant integral quantities. 

\begin{definition}
Let $\Omega\subset \mathbb{R}^{n}$ be a domain. For any \(1\leq p<+\infty\) and \(0<\theta< n\), the \textit{Morrey space} \(L^{p,\theta}(\Omega)\) is the set of measurable functions \(h:\Omega\to \mathbb R\) such  that
\[
\|h\|_{L^{p,\theta}(\Omega)}\defeq \sup_{\substack{x_{0}\in\Omega \\ 0<r\leq \operatorname{diam}(\Omega)}} \left(\frac{1}{r^{n-\theta}}\int_{\Omega(x_{0},r)}|h(x)|^{p}dx\right)^{\frac{1}{p}}<+\infty,
\] 
where \(\Omega(x_{0},r)=\Omega\cap \mathrm{B}_{r}(x_{0})\).
The \textit{Morrey–Sobolev space} $W^{k}L^{p,\theta}(\Omega)$ is defined as the set of measurable functions $h$ on $\Omega$ for which every distributional derivative $D^{\sigma}h$ belongs to $L^{p,\theta}(\Omega)$ for all multi-indices $\sigma$ with $|\sigma| = 0, \ldots, k$. In \(W^{k}L^{p,\theta}(\Omega)\) we can equip with the  norm 
\[
\|h\|_{W^{k}L^{p,\theta}(\Omega)}\defeq\sum_{|\sigma|\leq k}\|D^{\sigma}h\|_{L^{p,\theta}(\Omega)}.
\]
\end{definition}
The main idea for obtaining estimates in Morrey spaces is to establish an appropriate decay of certain integral quantities with respect to the radius. In our setting, this is accomplished through the weighted estimates proved above, which provide control over the radius-dependent terms appearing in the local estimates.

In this scenario, we have the following theorem:
\begin{theorem}[{\bf Morrey estimates}]\label{T3} 
Let \( \Omega \subset \mathbb{R}^{n} \) (\( n \geq 2 \)) be a bounded domain with \( \partial\Omega \in C^{3} \), and assume the structural conditions {\bf$(A_1)$}, {\bf ($A_3$)}-{\bf$(A_5)$}, {\bf$(E_1)$}-{\bf $(E_2)$} hold. Let \( u \) be a \(L^{q}\)- viscosity solution of \eqref{1.1}, where \( \beta \in C^{2}(\partial \Omega) \), \(\gamma=g=0\) and \( f \in L^{q,\bar\theta}(\Omega)\) for \(0<\bar\theta< n<q<+\infty\). Then,  \(u\in W^{2}L^{q,\bar\theta}(\Omega)\), with the estimate  
\begin{eqnarray}
\|u\|_{W^{2}L^{q,\bar\theta}(\Omega)}\le \mathrm{C}\cdot (\|f\|_{L^{q,\bar\theta}(\Omega)}+\|\varphi_{u}\|_{L^{q,\bar\theta}(\Omega)}),
\end{eqnarray}
where $\mathrm{C}=\mathrm{C}(n,\lambda,\Lambda,a,b,\delta_{0},\bar\theta,\|\beta\|_{C^{2}(\partial\Omega)},\Omega)$ is a positive constant.
\end{theorem}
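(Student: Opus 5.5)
The plan is to derive the Morrey bound from the weighted $L^{q}$ estimate of Theorem \ref{weightedLqestimates}, using a well-chosen Muckenhoupt weight concentrated at a point. Fix $x_{0}\in\Omega$ and $0<r\le\operatorname{diam}(\Omega)$, and consider the weight
\[
\omega_{x_0,r}(x)\defeq \min\{|x-x_{0}|^{-(n-\epsilon)},\, r^{-(n-\epsilon)}\}
\]
for a small $\epsilon\in(0,\bar\theta)$ to be chosen. Its $[\omega_{x_0,r}]_{\mathfrak{A}_{q}}$ constant is bounded independently of $x_{0}$ and $r$. Indeed, $|x|^{-s}\in\mathfrak{A}_{q}$ for $0\le s<n$, truncation by a constant preserves the $\mathfrak{A}_q$ property with a comparable constant (minimum of two $\mathfrak{A}_1$-type weights), and the constant is invariant under translation and dilation. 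Hence the constant $\mathrm{C}$ in \eqref{estLq} is uniform with respect to $x_{0},r$. Since $\gamma=g=0$ and $u$ is a solution, I will apply the a priori estimate of Theorem \ref{weightedLqestimates} with $\omega=\omega_{x_0,r}$. Uniqueness of the penalized/limiting solution is not available, so I will use the estimate as an a priori estimate for the given $u$ (regarding $\varphi_u$ as fixed data, since the underlying estimate \cite[Theorem 3.5]{Bessa24} applies to $u$ solving the equation with right-hand side $f+\varphi_u\in L^{q}_{\omega}$). This gives
\[
\int_{\Omega}(|u|^{q}+|Du|^{q}+|D^{2}u|^{q})\,\omega_{x_0,r}\,dx\le \mathrm{C}\int_{\Omega}(|f|^{q}+|\varphi_{u}|^{q})\,\omega_{x_0,r}\,dx .
\]

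Next, I compare the two sides with Morrey quantities. On the left, $\omega_{x_0,r}= r^{-(n-\epsilon)}$ on $\Omega(x_{0},r)$, so the left side dominates $r^{-(n-\epsilon)}\int_{\Omega(x_0,r)}|D^{k}u|^{q}$. On the right, I split $\Omega$ into $\Omega(x_0,r)$ and the dyadic annuli $\Omega(x_0,2^{j+1}r)\setminus\Omega(x_0,2^{j}r)$. On each annulus the weight is at most $(2^{j}r)^{-(n-\epsilon)}$, so that annulus contributes at most $(2^{j}r)^{-(n-\epsilon)}(2^{j+1}r)^{n-\bar\theta}\|h\|^{q}_{L^{q,\bar\theta}}$. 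Summing the geometric series, which converges because $\epsilon<\bar\theta$, gives a bound $\mathrm{C}r^{\epsilon-\bar\theta}\|h\|^{q}_{L^{q,\bar\theta}}$. For radii beyond $\operatorname{diam}(\Omega)$ the annuli are empty, so they do not interfere.

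Multiplying through by $r^{-\epsilon}$ yields
\[
r^{-(n-\bar\theta)}\int_{\Omega(x_0,r)}|D^{k}u|^{q}\lesssim \|f\|^{q}_{L^{q,\bar\theta}}+\|\varphi_u\|^{q}_{L^{q,\bar\theta}} .
\]
Taking the supremum over $x_{0}$ and $r$ gives the claimed estimate, and in particular $u\in W^{2}L^{q,\bar\theta}(\Omega)$.

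I expect the main obstacle to be the justification that the constant in Theorem \ref{weightedLqestimates} depends on $\omega$ only through $[\omega]_{\mathfrak{A}_q}$ and not on $\omega(\Omega)$ or other normalizations. The proof of Proposition \ref{penalyzedproblem} brings in $\omega(\Omega)$, but that factor enters only through the bound on $\varphi_{u}$, which we keep on the right as a Morrey norm. I would verify this dependence by tracing the constants in \cite[Theorem 3.5]{Bessa24}, where the weight enters via the weighted maximal function bound and Lemma \ref{mergulhoorliczlebesgue}. If needed, I would normalize $\omega_{x_0,r}$ by dilation to a fixed scale so that $\omega$-dependent quantities stay uniform.
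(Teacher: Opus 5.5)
Your proposal is correct and is essentially the paper's proof: your weight $\min\{|x-x_0|^{-(n-\epsilon)},r^{-(n-\epsilon)}\}$ is the paper's weight with $\epsilon=\bar\theta-\tau$, and you then apply Theorem~\ref{weightedLqestimates} and sum the same dyadic-annuli geometric series. Keeping $\epsilon$ fixed is the safer choice, since the paper's final step $\tau\to\bar\theta^-$ makes the Muckenhoupt constant of the weight degenerate. Note also that the class actually required is $\mathfrak{A}_{q/n}$ rather than $\mathfrak{A}_q$, and your $\mathfrak{A}_1$ argument for the truncated power weight already covers this.
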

\begin{proof}
We show the result for \( q < +\infty \).  For \( q = +\infty \), the proof follows in a   similar way with minor modifications.
First, fix   \(x_{0}\in \Omega\), \(0<r<\operatorname*{diam}(\Omega)\) and extend \(f\) by zero in $\mathbb R^n\setminus
\Omega$.   For \(\tau\in(0,\bar\theta)\), set
\[
\omega(x):=\min\{|x-x_{0}|^{-n+\bar\theta-\tau},r^{-n+\bar\theta-\tau}\}.
\]
As $\bar\theta-\tau<n$, it follows that \(\omega\) is a weight that belongs to the {\it Muckenhoupt class} \(\mathfrak{A}_{\frac{q}{n}}\) with \([\omega]_{\frac{q}{n}} \leq 1\). Since \(\omega_{|_{\mathrm{B}_{r}(x_{0})}} = r^{-n+\bar\theta-\tau}\),  using Theorem \ref{weightedLqestimates}, we obtain 
\begin{eqnarray}\label{est1teomorrey}
\|D^{2}u\|_{L^{q}(\Omega(x_{0},r))}=r^{\frac{n-\bar\theta+\tau}{q}}\|D^{2}u\|_{L^{q}_{\omega}(\Omega(x_{0},r))}\leq \mathrm{C}r^{\frac{n-\bar\theta+\tau}{q}}\left(\|f\|_{L^{q}_{\omega}(\Omega)}+\|\varphi_{u}\|_{L^{q}_{\omega}(\Omega)}\right). 
\end{eqnarray}

First, we find an estimate for the first expression in the right-hand side of \eqref{est1teomorrey}. By the definitions of the weighted Lebesgue norm and the weight \(\omega\), it follows that
\begin{eqnarray}
\|f\|_{L^{q}_{\omega}(\Omega)}&=&\left(\int_{\mathbb R^n} |f(x)|^q \omega(x)dx\right)^{\frac{1}{q}}\leq\left(\int_{B_r(x_0)} |f(x)|^q\omega(x) dx\right)^{\frac{1}{q}}+ \left(\int_{\left(B_r(x_0)\right)^c} |f(x)|^q \omega(x)dx\right)^{\frac{1}{q}}dt\nonumber\\
&=:&\mathfrak{J}_{1}+\mathfrak{J}_{2}.\label{estdef}
\end{eqnarray}
We estimate  the terms \(\mathfrak{J}_{1}\) and \(\mathfrak{J}_{2}\) seperately.
Since \(\omega_{|_{\mathrm{B}_{r}(x_{0})}}=r^{-n+\theta-\tau}\), it follows that
\begin{eqnarray}\label{estJ1}
\mathfrak{J}_{1}&=&\left(\int_{B_r(x_0)} |f(x)|^q \omega(x)dx\right)^{\frac{1}{q}}=\left(\int_{B_r(x_0)} |f(x)|^q\,r^{(-n+\bar\theta-\tau)} dx\right)^{\frac{1}{q}}\leq r^{\frac{-\tau}{q}} \|f\|_{L^{q,\bar\theta}(\Omega)}.
\end{eqnarray}
Regarding $\mathfrak{J}_{2}$, we need to carry out a delicate analysis.  By the definition of the weight \( \omega \), we know that \( \omega \leq r^{-n+\bar\theta-\tau} \) in \( \mathbb{R}^{n} \), and that \( \omega(x) = |x - x_{0}|^{-n+\bar\theta-\tau} \) for all \( x \in (\mathrm{B}_{r}(x_{0}))^{c} \). Thus, using that \(\frac{1}{-n+\theta-\tau}<0\),  we infer 
\[
\omega(x)>s\Longrightarrow s^{\frac{1}{-n+\bar\theta-\tau}}> r, \quad \mbox{for all } s > 0, \  x \in (\mathrm{B}_{r}(x_{0}))^{c}.
\]
Consequently, it yields  that
\begin{eqnarray*}
|x-x_{0}|<s^{\frac{1}{-n+\theta-\tau}},   \quad \mbox{for all }  x\in (\mathrm{B}_{r}(x_{0}))^{c}.
\end{eqnarray*}
Hence,  for all \( s > 0 \), we obtain
\begin{eqnarray}\label{inclusaodebola}
\{x\in (\mathrm{B}_{r}(x_{0}))^{c};\omega(x)>s\}\subset \mathrm{B}_{s^{\frac{1}{-n+\bar\theta-\tau}}}(x_{0}).
\end{eqnarray}
 In addition, for each \(j\in\mathbb{N}\), we define the annulus 
\(
\mathrm{A}_{j}(x_{0},r)=\mathrm{B}_{2^{j}r}(x_{0})\setminus \mathrm{B}_{2^{j-1}r}(x_{0}).
\) Thus,
\begin{eqnarray}
\mathfrak{J}_{2}&=&\left(\sum_{j=1}^{\infty}\int_{\mathrm{A}_{j}(x_{0},r)}|f(x)|^q \omega(x)dx\right)^{\frac{1}{q}}.
%\nonumber\\
%&\leq&\sum_{j=1}^{\infty}q\int_{0}^{\infty}t^{q-1}\left(\int_{\mathrm{A}_{j}(x_{0},r)}\omega(x)\chi_{\{|f|>t\}}(x)dx\right)^{\frac{q}{p}}dt,
\label{estdeJ2parapmaiorqueq}
\end{eqnarray}
%where in last inequality we use the convexity inequality for series since \(\frac{q}{p}\leq 1\).
We now analyze each one of the integrals into the series in \eqref{estdeJ2parapmaiorqueq}. Since  \(n-\bar\theta> 0\), for each \(j\in\mathbb{N}\), we have 
\[
\omega(x)=|x-x_{0}|^{-n+\bar\theta-\tau}\leq 2^{(j-1)(-n+\bar\theta-\tau)}r^{-n+\bar\theta-\tau}=2^{-(j-1)(n-\bar\theta+\tau)}r^{-n+\bar\theta-\tau}, \mbox{ for all }  x\in \mathrm{A}_{j}(x_{0},r).
\]
 So, using this along with the fact that $f\equiv 0$ in $\mathbb R^n\setminus \Omega$, and the definition of the {\it Morrey norm}, we deduce
\begin{eqnarray}
(2^{j-1}r)^{(n-\bar\theta+\tau)}\int_{\mathrm{A}_{j}(x_{0},r)}|f(x)|^q \omega(x)dx&\leq&\int_{\mathrm{A}_{j}(x_{0},r)}|f(x)|^qdx\leq\int_{\mathrm{B}_{2^{j}r}(x_{0})}|f(x)|^qdx\nonumber\\
&=&\int_{\Omega(x_{0},2^{j}r)} |f(x)|^q dx\leq(2^jr)^{n-\bar\theta}\|f\|_{L^{q,\bar\theta}(\Omega)}^q.
%\nonumber\\&=&2^{n}2^{-(j-1)\tau-\bar\theta}r^{-\tau}\|f\|_{L^{q,\bar\theta}(\Omega)}^q .
\label{estdeAj}
\end{eqnarray}
Plugging \eqref{estdeAj} into \eqref{estdeJ2parapmaiorqueq}, it yields
\begin{eqnarray}
\mathfrak{J}_{2}&\leq&\sum_{j=1}^{\infty}2^{\frac{-(j-1)\tau+n -\bar\theta}{q}}r^{\frac{-\tau}{q}}\|f\|_{L^{q,\bar\theta}(\Omega)}=2^{\frac{(n+\tau-\bar\theta)}{q}}r^{\frac{-\tau}{q}}\|f\|_{L^{q,\bar\theta}(\Omega)}\sum_{j=1}^{\infty}\frac{1}{\left(2^{\frac{\tau}{q}}\right)^{j}}\nonumber\\
&=&\frac{2^{\frac{(n+\tau-\bar\theta)}{q}}}{2^{\frac{\tau}{q}}-1}r^{\frac{-\tau}{q}}\|f\|_{L^{q,\bar\theta}(\Omega)}\label{estdeJ2}.
\end{eqnarray}
Thus, from \eqref{estJ1} and \eqref{estdeJ2}, it yields that
\begin{align}\label{f-est}
\|f\|_{L^{q}_{\omega}(\Omega)}\leq 2\max\left\{1, \frac{2^{\frac{(n+\tau-\bar\theta)}{q}}}{2^{\frac{\tau}{q}}-1}\right\}r^{\frac{-\tau}{q}}\|f\|_{L^{q,\bar\theta}(\Omega)}.
\end{align}

Next step is estimating the second expression on the right-hand side of \eqref{est1teomorrey}. By the definitions of the weighted Lebesgue norm and the weight \(\omega\), it follows that
\begin{eqnarray}
\|\varphi_u\|_{L^{q}_{\omega}(\Omega)}&=&\left(\int_{\mathbb R^n} |\varphi(|\{z\in \Omega\,:\,u(z)\geq u(x)\}|)|^q \omega(x)dx\right)^{\frac{1}{q}}\nonumber\\
&\leq&\left(\int_{B_r(x_0)} |\varphi(|\{z\in \Omega\,:\,u(z)\geq u(x)\}|)|^q\omega(x) dx\right)^{\frac{1}{q}}\nonumber\\
&+& \left(\int_{\left(B_r(x_0)\right)^c} |\varphi(|\{z\in \Omega\,:\,u(z)\geq u(x)\}|)|^q \omega(x)dx\right)^{\frac{1}{q}}\label{estdeff}
\end{eqnarray}
Now, following a similar calculation as in \eqref{estdef}, \eqref{estJ1} and \eqref{estdeJ2}, from \eqref{estdeff} we achieve that 
\begin{align}\label{varphi-est}
  \|\varphi_u\|_{L^{q}_{\omega}(\Omega)}\leq 2\max\left\{1, \frac{2^{\frac{(n+\tau-\bar\theta)}{q}}}{2^{\frac{\tau}{q}}-1}\right\}r^{\frac{-\tau}{q}}\|\varphi_u\|_{L^{q,\bar\theta}(\Omega)}.  
\end{align}
Consequently, combining \eqref{f-est} and \eqref{varphi-est}, in view of \eqref{est1teomorrey}, we obtain
\begin{eqnarray}\label{est2teomorrey}
\|D^{2}u\|_{L^{q}(\Omega(x_{0},r))}\leq 2\mathrm{C}r^{\frac{n-\bar\theta}{q}}\max\left\{1, \frac{2^{\frac{(n+\tau-\bar\theta)}{q}}}{2^{\frac{\tau}{q}}-1}\right\}\left(\|f\|_{L^{q,\bar\theta}(\Omega)}+\|\varphi_u\|_{L^{q,\bar\theta}(\Omega)}\right).
\end{eqnarray}
By the arbitrariness of \(\tau \in (0, \theta)\), taking \(\tau \to \theta^{-}\), we obtain from \eqref{est2teomorrey} that
\begin{eqnarray}\label{est3teomorrey}
\|D^{2}u\|_{L^{q}(\Omega(x_{0},r))}\leq \mathrm{C}^{\prime}r^{\frac{n-\bar\theta}{q}}\left(\|f\|_{L^{q,\bar\theta}(\Omega)}+\|\varphi_u\|_{L^{q,\bar\theta}(\Omega)}\right),
\end{eqnarray}
where
\[
\mathrm{C}^{\prime}=2\mathrm{C}\max\left\{1,\frac{2^{\frac{n}{q}}}{2^{\frac{\bar\theta}{q}}-1}\right\}.
\]
Finally, dividing both sides of \eqref{est3teomorrey} by \( r^{\frac{n-\bar\theta}{q}} \) and taking the supremum over \( x_{0} \in \Omega \) and \( 0 < r \leq \operatorname{diam}(\Omega) \), we  conclude
\[
\|D^{2}u\|_{L^{q,\bar\theta}(\Omega)} \leq \mathrm{C}^{\prime}\left(\|f\|_{L^{q,\bar\theta}(\Omega)}+\|\varphi_u\|_{L^{q,\bar\theta}(\Omega)}\right).
\]  
 The \( L^{q,\bar\theta}(\Omega) \)-regularity   of the terms \( u \) and \( Du \) follows in a completely analogous way to the argument established for the Hessian, by applying the weighted estimates provided via Theorem \ref{T1}.    
\end{proof}

As an immediate consequence of Theorem \ref{T3}, we recover the Hölder regularity for the gradient of the solutions to problem \eqref{1.1}. While the case $\bar\theta=0$ follows from Corollary 3.4, the next result treats the genuinely Morrey regime $0<\bar\theta<n$.
\begin{corollary}
Under the assumptions of Theorem \ref{T3}, let \(u\) be a $L^{q}$-viscosity solution to problem \eqref{1.1} with $\gamma=g=0$. Then, $Du\in C^{0,1-\frac{n-\bar\theta}{q}}(\overline{\Omega})$.
\end{corollary}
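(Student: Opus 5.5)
The plan is to deduce the corollary from Theorem \ref{T3} through a Morrey--Campanato embedding applied to $Du$. By Theorem \ref{T3}, $u\in W^{2}L^{q,\bar\theta}(\Omega)$. In particular $D^{2}u\in L^{q,\bar\theta}(\Omega)$ and $Du\in L^{q,\bar\theta}(\Omega)$, with norms controlled by $\|f\|_{L^{q,\bar\theta}(\Omega)}+\|\varphi_u\|_{L^{q,\bar\theta}(\Omega)}$. Each component $v=\partial_i u$ therefore lies in $W^{1}L^{q,\bar\theta}(\Omega)$. The whole claim reduces to the classical Morrey lemma: if $Dv\in L^{q,\bar\theta}$ with $q>n-\bar\theta$, then $v\in C^{0,1-\frac{n-\bar\theta}{q}}$. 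Here $q>n>n-\bar\theta$, so the exponent $1-\frac{n-\bar\theta}{q}$ lies in $(0,1)$.

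\emph{Step 1 (oscillation decay).} Fix $x_0\in\Omega$ and $0<r\le\operatorname{diam}(\Omega)$. By Poincar\'e's inequality on $\Omega(x_0,r)$ and then H\"older's inequality,
\[
\intav{\Omega(x_0,r)}|v-v_{x_0,r}|\,dx\le \mathrm{C} r\left(\intav{\Omega(x_0,r)}|Dv|^{q}dx\right)^{1/q}\le \mathrm{C} r^{1-\frac{n}{q}}\, r^{\frac{n-\bar\theta}{q}}\|Dv\|_{L^{q,\bar\theta}(\Omega)} .
\]
Here we use $|\Omega(x_0,r)|\ge c\,r^{n}$. The right-hand side equals $\mathrm{C}r^{1-\frac{n-\bar\theta}{q}}\|D^2u\|_{L^{q,\bar\theta}(\Omega)}$.

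\emph{Step 2 (Campanato's characterization).} The bound of Step 1 places $v$ in the Campanato space $\mathcal{L}^{1,\,n+(1-\frac{n-\bar\theta}{q})}(\Omega)$. Campanato's theorem identifies this space with $C^{0,1-\frac{n-\bar\theta}{q}}(\overline{\Omega})$, with comparable norms. Concretely, one telescopes the averages over dyadic balls $\Omega(x_0,2^{-k}r)$; the resulting geometric series converges because the exponent is positive. This gives $|v(x)-v(y)|\le \mathrm{C}|x-y|^{1-\frac{n-\bar\theta}{q}}\|D^{2}u\|_{L^{q,\bar\theta}(\Omega)}$, and the sup bound follows from $\|Du\|_{L^{q,\bar\theta}}$ in the same way.

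\emph{Main obstacle.} The one point needing care is the boundary. Both the Poincar\'e inequality and the density bound $|\Omega(x_0,r)|\ge c r^{n}$ must hold uniformly up to $\partial\Omega$. This is true because $\partial\Omega\in C^{3}$ makes $\Omega$ a uniform Lipschitz (type A) domain. Poincar\'e on $\Omega\cap \mathrm{B}_r(x_0)$ then holds with a constant independent of $x_0$ and $r$, after flattening the boundary locally or passing to a bounded extension operator $W^{1}L^{q,\bar\theta}(\Omega)\to W^{1}L^{q,\bar\theta}(\mathbb{R}^{n})$. Once this is in place, $Du$ extends continuously to $\overline{\Omega}$ with the stated H\"older exponent, and its seminorm is bounded by $\mathrm{C}(\|f\|_{L^{q,\bar\theta}(\Omega)}+\|\varphi_u\|_{L^{q,\bar\theta}(\Omega)})$.
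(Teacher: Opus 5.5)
\textbf{There is a genuine gap: the exponent arithmetic in Step~1 is wrong.} Your route is to apply the Campanato/Morrey embedding to $Du$, using $D^2u\in L^{q,\bar\theta}(\Omega)$ from Theorem~\ref{T3}. The paper gives no argument beyond calling the corollary an immediate consequence of Theorem~\ref{T3}, and this is surely the intended reading. However, the slip in Step~1 hides a real problem with the statement itself.

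The paper normalizes the Morrey space by $\int_{\Omega(x_0,r)}|h|^q\,dx\le r^{n-\bar\theta}\|h\|^q_{L^{q,\bar\theta}(\Omega)}$. This is exactly what the proof of Theorem~\ref{T3} delivers: it ends with $\|D^2u\|_{L^q(\Omega(x_0,r))}\le \mathrm{C}' r^{\frac{n-\bar\theta}{q}}(\cdots)$. With this normalization your own chain reads
\[
\frac{1}{|\Omega(x_0,r)|}\int_{\Omega(x_0,r)}|v-v_{x_0,r}|\,dx\le \mathrm{C}\,r\cdot r^{-\frac{n}{q}}\cdot r^{\frac{n-\bar\theta}{q}}\,\|D^2u\|_{L^{q,\bar\theta}(\Omega)}=\mathrm{C}\,r^{1-\frac{\bar\theta}{q}}\,\|D^2u\|_{L^{q,\bar\theta}(\Omega)},
\]
not $\mathrm{C}r^{1-\frac{n-\bar\theta}{q}}$. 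Campanato's theorem therefore gives $Du\in C^{0,1-\frac{\bar\theta}{q}}(\overline{\Omega})$. The Morrey lemma you quote (``$Dv\in L^{q,\bar\theta}$, $q>n-\bar\theta$ $\Rightarrow$ $v\in C^{0,1-\frac{n-\bar\theta}{q}}$'') belongs to the standard normalization $\int_{\Omega(x_0,r)}|Dv|^q\,dx\le \mathrm{C}r^{\bar\theta}$. That is not the space in which Theorem~\ref{T3} places $D^2u$.

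Since $\Omega$ is bounded, $C^{0,1-\frac{\bar\theta}{q}}(\overline{\Omega})\subset C^{0,1-\frac{n-\bar\theta}{q}}(\overline{\Omega})$ exactly when $\bar\theta\le n/2$. So the corrected argument proves the corollary in that range.

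For $n/2<\bar\theta<n$, no argument can close the gap. Take:
\begin{itemize}
\item[(i)] $\varphi\equiv 0$ and $F(\mathrm{X},\xi,r,x)=\operatorname{tr}\mathrm{X}-d\,r$, which satisfy all hypotheses of Theorem~\ref{T3};
\item[(ii)] a point $x_0\in\Omega$;
\item[(iii)] $f=\eta\,|x-x_0|^{-\bar\theta/q}$, with $\eta$ a cutoff equal to $1$ near $x_0$.
\end{itemize}
Then $f\in L^{q,\bar\theta}(\Omega)$ in the paper's sense, since $\int_{\mathrm{B}_r(y)}|x-x_0|^{-\bar\theta}dx\le \mathrm{C}r^{n-\bar\theta}$. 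Near $x_0$ the solution equals $c\,|x-x_0|^{2-\bar\theta/q}$ plus a $C^{2,\alpha}$ function, where $c=[(2-\frac{\bar\theta}{q})(n-\frac{\bar\theta}{q})]^{-1}$. Hence $|Du(x)-Du(x_0)|\ge c(2-\frac{\bar\theta}{q})|x-x_0|^{1-\frac{\bar\theta}{q}}-\mathrm{C}|x-x_0|$. So $Du$ is not H\"older continuous at $x_0$ with any exponent larger than $1-\frac{\bar\theta}{q}$, and $1-\frac{\bar\theta}{q}<1-\frac{n-\bar\theta}{q}$ in this range.

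Consequently, under the definition of $L^{q,\bar\theta}$ actually used in Section~\ref{Section4}, the correct conclusion is $Du\in C^{0,1-\frac{\bar\theta}{q}}(\overline{\Omega})$. The stated exponent is valid only if $L^{q,\bar\theta}$ is read with the standard normalization $r^{\bar\theta}$.

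Your handling of the boundary is fine and is not where the difficulty lies: uniform Poincar\'e and density bounds on a $C^3$ domain are only needed for $r\le r_0$ in Campanato's argument.
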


%%%%%%%%%%%%%%%%%%%%%%%%%%%%%%%%%%%%%%%%%%%%%%%%%%%%%%%%%%%%%%
\section{\texorpdfstring{Global $L^{\Upsilon}_{\omega}-\mathrm{BMO}$ Estimates}{Global L^{Upsilon}_{omega}-BMO estimates}}\label{Section5}

In this section, we present the proof of Theorem \ref{T2}. Similarly to the approach used for Theorem \ref{T1}, it is crucial to guarantee the global regularity in the absence of the semilinear term $\varphi_{u}$. This result is summarized in the following theorem.

\begin{theorem}\label{Casevarphiequaltozero}
Assume the structural conditions {\bf$(A_1)$-($A_3$)} and {\bf ($A_4$)$^{\star}$} hold true. Let $u$ be a $L^{p}$-viscosity solution to 
 \begin{equation*}
\left\{
\begin{array}{rclcl}
 F(D^2u,Du,u,x) &=& f(x)& \mbox{in} &   \Omega \\
\mathcal{G}(Du,u,x)&=& g(x) &\mbox{on}& \partial \Omega,
\end{array}
\right.
\end{equation*}
where $p=\bar{p}n$, and  $\bar{p}>1$ is the constant of  Lemma \ref{mergulhoorliczlebesgue}, and $f\in L^{\Upsilon}_{\omega}-\mathrm{BMO}(\Omega)\cap L^{\Upsilon}_{\omega}(\Omega)$ for $\Upsilon(t)=\Phi(t^{n})$. Then, $D^{2}u\in L^{\Upsilon}_{\omega}-\mathrm{BMO}(\Omega)$ with the following estimate
\begin{equation*}
\|D^{2}u\|_{L^{\Upsilon}_{\omega}-\mathrm{BMO}(\Omega)}\leq \mathrm{C}(\|u\|_{L^{\infty}(\Omega)}^{n}+\|f\|_{L^{\Upsilon}_{\omega}-\mathrm{BMO}(\Omega)}+\|g\|_{C^{1,\alpha}(\partial\Omega)}),
\end{equation*}
for a positive constant $\mathrm{C}$ depending only on $n$, $\lambda$, $\Lambda$, $a$, $b$, $\bar{p}$, $p_{2}$, $\Phi$, $\omega$, $\delta_{0}$, $\bar{\theta}$, $\alpha^{\prime}$, $\|\beta\|_{C^{2}(\partial \Omega)}$, $\|\gamma\|_{C^{2}(\partial \Omega)}$ and $\|\partial \Omega\|_{C^{2,\alpha}}$.
\end{theorem}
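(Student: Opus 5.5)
We prove Theorem \ref{Casevarphiequaltozero} by a Campanato-type approximation: at each scale, compare $u$ with a solution of the recession (or frozen-coefficient) problem, which enjoys $C^{2,\rho}$ estimates by {\bf ($A_3$)} and {\bf ($A_4$)$^{\star}$}. This yields a pointwise mean-oscillation bound for $D^{2}u$ on balls. We then convert that bound into the weighted Orlicz BMO norm using Remark \ref{Remark2.12}, which says $L^{\Upsilon}_{\omega}-\mathrm{BMO}(\Omega)$ and $\mathrm{BMO}(\Omega)$ have equivalent seminorms. So it suffices to prove
\[
\sup_{\mathrm{B}_r(x_0)}\intav{\mathrm{B}_r(x_0)\cap\Omega}|D^2u-(D^2u)_{\mathrm{B}_r(x_0)\cap\Omega}|\,dx\le \mathrm{C}\big(\|u\|_{L^\infty(\Omega)}^{n}+\|f\|_{\mathrm{BMO}(\Omega)}+\|g\|_{C^{1,\alpha}(\partial\Omega)}\big).
\]
By Lemma \ref{mergulhoorliczlebesgue}, $f\in L^{\Upsilon}_\omega(\Omega)\subset L^{p}(\Omega)$ with $p=\bar p n>n$. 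Hence the global $W^{2,p}$ (indeed $W^{2,\Upsilon}_{\omega}$) estimate of \cite[Theorem 3.5]{Bessa24} applies and gives $D^2u\in L^p$, so all averages below make sense.

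\textbf{Step 1 (flattening and normalization).} Using $\partial\Omega\in C^{2,\alpha}$ and the $C^{2}$ regularity of $\beta,\gamma$, we flatten the boundary locally. This reduces matters to two model configurations: interior balls $\mathrm{B}_1$ and half balls $\mathrm{B}_1^+$ with the oblique condition on $\mathrm{T}_1$. The transformed operator keeps the structure {\bf($A_1$)}, {\bf($A_5$)} with comparable constants. We subtract the affine/quadratic correction that absorbs $g(0)$ and $Dg(0)$, and we subtract the constant $f_{\mathrm{B}_r}$ from $f$, replacing $u$ by $u-\tfrac{1}{2}\langle \mathrm{M}_0x,x\rangle$ for a matrix $\mathrm{M}_0$ solving $F^{\star}(\mathrm{M}_0,x_0)=f_{\mathrm{B}_r}$. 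We then rescale $u_r(y)=\kappa^{-1}r^{-2}u(x_0+ry)$ with $\kappa$ comparable to $\|u\|_{L^\infty}+\|f\|_{\mathrm{BMO}}+\|g\|_{C^{1,\alpha}}$. After this, the right-hand side has small $L^p$ norm on the unit scale, because its mean oscillation is controlled by $\|f\|_{\mathrm{BMO}}$ via John--Nirenberg. In addition, the rescaled operator $F_{\mu}(\mathrm{X},\xi,s,y)=\mu F(\mu^{-1}\mathrm{X},\dots)$ is close to $F^{\star}$, by the definition \eqref{recessionop} and {\bf($A_5$)}.

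\textbf{Step 2 (approximation lemma and iteration).} By a compactness argument based on the Stability Lemma \ref{stability} and the boundary stability for oblique problems, we show the following. For every $\epsilon>0$ there is $\delta>0$ such that, whenever the data are $\delta$-small, there is a solution $\mathfrak h$ of the $F^{\star}$-problem with $\|u_r-\mathfrak h\|_{L^\infty(\mathrm{B}_{1/2}^+)}\le\epsilon$. Hypothesis {\bf($A_4$)$^\star$} (respectively {\bf($A_3$)} upgraded through the interior $C^{2,\rho}$ theory) gives a quadratic polynomial $P$ with $|\mathfrak h-P|\le \mathrm{c}^\ast s^{2+\rho}$ in $\mathrm{B}_s^+$. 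Choosing $s$ small and then $\epsilon$ so that $\mathrm{c}^\ast s^{2+\rho}+\epsilon\le s^{2}\cdot s^{\rho/2}$, we iterate geometrically to obtain polynomials $P_k$ with $\sup_{\mathrm{B}_{s^k}}|u_r-P_k|\le s^{2k}$ and $|D^2P_{k+1}-D^2P_k|\le \mathrm{C}$. Note the constants are \emph{bounded}, not decaying, because $f$ is only BMO; this yields $\sup_{\mathrm B_{s^k}}|u_r-P_k|\le \mathrm{C}s^{2k}$. Since $u_r-P_k$ solves a problem with $L^{p}$-small data, the $W^{2,p}$ estimate on $\mathrm{B}_{s^k}$ then gives
\[
\intav{\mathrm{B}_{s^k}}|D^2u_r-D^2P_k|\,dy\le \mathrm{C},
\]
which is the BMO bound after undoing the scaling. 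The factor $\|u\|_{L^\infty}^{n}$ in the statement comes from how the normalization is measured in $\Upsilon(t)=\Phi(t^n)$ terms; I will simply track it through the scaling constant.

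\textbf{Step 3 (globalization and conversion).} A covering argument handles the two regimes. For balls with $\mathrm{dist}(x_0,\partial\Omega)\ge r$ we use the interior version; otherwise we enlarge to a boundary-centered half ball with comparable radius. Large radii ($r\ge r_0$) are controlled directly by the global $L^p$ bound on $D^2u$. Remark \ref{Remark2.12} then converts the BMO bound into the $L^{\Upsilon}_{\omega}-\mathrm{BMO}$ bound, with $\|f\|_{\mathrm{BMO}}\le \mathrm{C}_1^{-1}\|f\|_{L^{\Upsilon}_{\omega}-\mathrm{BMO}}$.

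\textbf{Main obstacle.} The delicate point is the boundary approximation lemma in Step 2. After subtracting quadratic polynomials, the oblique condition $\mathcal{G}(Du,u,x)=g$ acquires polynomial-dependent terms through $\gamma u$ and $\beta\cdot DP_k$. These must remain in $C^{1,\rho}$ with bounded norm uniformly in $k$, and this is where $\beta,\gamma\in C^{2}$ is used. We must also ensure the recession limit $F_\mu\to F^{\star}$ holds uniformly in the compactness argument. I would first try to absorb the boundary terms into a modified $g_k$ at each step and control $\|g_k\|_{C^{1,\rho}}$ by a geometric series.
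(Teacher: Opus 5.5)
Your Step 2 is where the argument breaks, and the problem is structural, not technical.

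In a Campanato iteration with $\mathrm{BMO}$ data, the rescaled function $w_k(y)=s^{-2k}(u_r-P_k)(s^k y)$ solves an equation for the \emph{shifted} operator $\mathrm{X}\mapsto F_\mu(\mathrm{X}+D^2P_k,\dots)$. As you note yourself, only $|D^2P_{k+1}-D^2P_k|\le \mathrm{C}$ is available, so $|D^2P_k|$ may grow like $k$. The recession limit \eqref{recessionop} only gives $\sup_{\|\mathrm{Y}\|\le R}|F_\mu(\mathrm{Y},\dots)-F^{\star}(\mathrm{Y},\dots)|\to 0$ as $\mu\to0^{+}$ for each fixed $R$. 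It gives nothing uniform when $\mu$ is fixed and $R\sim|D^2P_k|\to\infty$. Moreover, your $\mu=\kappa^{-1}$, with $\kappa$ comparable to the size of the data, is not even small to begin with.

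Even if $D^2P_k$ stayed bounded, the compactness limit in your approximation lemma would solve $F^{\star}(D^2\mathfrak{h}+\mathrm{N})=F^{\star}(\mathrm{N})$. Equivalently, $F^{\star}(D^2\tilde{\mathfrak{h}})=c$ with $\tilde{\mathfrak h}=\mathfrak h+\frac12\langle \mathrm N x,x\rangle$ and, in general, $c\neq 0$; your choice $F^{\star}(\mathrm{M}_0,x_0)=f_{\mathrm{B}_r}$ produces exactly this. But {\bf ($A_4$)$^{\star}$} gives $C^{2,\rho}$ bounds only for $F^{\star}(D^2\mathfrak{h})=0$. Since $F^{\star}$ is not assumed linear or convex, these are different equations.

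In the interior it is worse. {\bf ($A_3$)} gives only $C^{1,1}$ bounds, which yield $|\mathfrak{h}-\ell|\le \mathrm{C}s^2$ for an affine $\ell$ but not the $s^{2+\rho}$ quadratic improvement your iteration needs. Nothing in the hypotheses supplies an ``interior $C^{2,\rho}$ theory''. So the essential ingredient, a $C^{2,\rho}$ comparison profile at every scale uniformly in $k$, is missing.

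Two smaller points. Choosing $\mathrm{M}_0$ through $F^{\star}$ rather than $F$ leaves the residual $f_{\mathrm{B}_r}-F(\mathrm{M}_0,\dots)$ in the right-hand side, and this need not be small. Also, a normalization of this type produces a bound linear in $\|u\|_{L^{\infty}(\Omega)}$, which does not give the stated $\|u\|_{L^{\infty}(\Omega)}^{n}$ term when $\|u\|_{L^{\infty}(\Omega)}<1$.

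For comparison, the paper never runs this iteration; it freezes the lower-order terms instead. Here $u$ is an $L^{p}$-viscosity solution of $G(D^2u,x)=\widehat{f}$, with $G(\mathrm{M},x)=F(\mathrm{M},0,0,x)$ and $\widehat{f}(x)=F(D^2u(x),0,0,x)$, defined a.e. by \cite[Theorem 3.6]{CCKS}. Three ingredients control $\widehat{f}$ in $L^{\Upsilon}_{\omega}$-$\mathrm{BMO}$:
\begin{itemize}
\item the bound $|\widehat{f}|\le a|Du|+b|u|+|f|$;
\item the weighted Orlicz--Poincar\'e inequality;
\item the global $W^{2,\Upsilon}_{\omega}$ estimate.
\end{itemize}
After flattening, the boundary $\mathrm{BMO}$ Hessian estimate for $G$ is imported from \cite[Theorem 5.2]{Bessa}, and a covering argument concludes.

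Your treatment of $a|Du|+b|u|$ by scaling is a reasonable substitute for the freezing step. The core $\mathrm{BMO}$ estimate, however, must either be imported in the same way or be supported by extra ingredients. One option would be $C^{2,\rho}$ estimates for $F^{\star}(D^2\mathfrak{h}+\mathrm{N})=F^{\star}(\mathrm{N})$ uniform in $\mathrm{N}$, as convexity of $F^{\star}$ would provide. This would also need a mechanism that keeps the shifted operators close to $F^{\star}$ at all scales.
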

\begin{proof}
Based on standard arguments (cf. \cite{Winter} and \cite{Bessa}), the result follows from reductions to simplify the problem and from the use of boundary estimates for the Hessian, already available in \cite[Theorem 5.2]{Bessa}. To avoid unnecessary repetition, we only briefly comment on the details of this technique. First, note that $u$ is also an $L^{p}$-viscosity solution to
\begin{equation*}
\left\{
\begin{array}{rclcl}
G(D^2u,x) &=& \widehat{f}(x)& \mbox{in} &   \Omega \\
\mathcal{G}(Du,u,x)&=& g(x) &\mbox{on}& \partial \Omega,
\end{array}
\right.
\end{equation*}
where $G(\mathrm{M},x)=F(\mathrm{M},0,0,x)$ and $\widehat{f}(x)=:F(D^{2}u(x),0,0,x)$ is well-defined, since by \cite[Theorem 3.6]{CCKS} we know that $u$ is twice differentiable a.e. in $\Omega$. Moreover, by the structural hypothesis {\bf$(A_1)$}, we have that $\widehat{f}$ satisfies
\begin{eqnarray}
|\widehat{f}(x)|&\leq& |F(D^{2}u(x),Du(x),u(x),x)-F(D^{2}u(x),0,0,x)|+|f(x)|\nonumber\\
&\leq& a|Du(x)|+b|u(x)|+|f(x)|,\,\,\text{ a. e. }\quad x\in \Omega.\label{5.1}
\end{eqnarray}
On the other hand, by the weighted Orlicz--Poincar\'e inequality \cite{Yong25}, we obtain the following estimates
\begin{equation}\label{5.2}
\|u\|_{L^{\Upsilon}_{\omega}-\mathrm{BMO}(\Omega)}\leq \mathrm{c}_{1} \|Du\|_{L^{\Upsilon}_{\omega}(\Omega)}\quad \text{and}\quad \|Du\|_{L^{\Upsilon}_{\omega}-\mathrm{BMO}(\Omega)}\leq \mathrm{c}_{2}\|D^{2}u\|_{L^{\Upsilon}_{\omega}(\Omega)},
\end{equation}
where $\mathrm{c}_{1},\mathrm{c}_{2}>0$ depends only on $n$, $\omega$ and $\Phi$. Since $f\in L^{\Upsilon}_{\omega}(\Omega)$,  by \cite[Theorem 3.5]{Bessa}, we know that these quantities are bounded, specifically,
\begin{equation*}
\|Du\|_{L^{\Upsilon}_{\omega}(\Omega)}, \ \|D^2u\|_{L^{\Upsilon}_{\omega}(\Omega)}\leq \mathrm{C}(\|f\|_{L^{\Upsilon}_{\omega}(\Omega)}+\|g\|_{C^{1,\alpha}(\partial \Omega)}),
\end{equation*}
where $\mathrm{C}>0$ depends only on \(n\), \(\lambda\), \(\Lambda\), \(a\), \(b\), \(\delta_{0}\), \(\bar{p}\), \(p_{2}\), \(\Phi\), \(\omega\), \(\bar{\theta}\), \(\alpha^{\prime}\), \(\|\beta\|_{C^{2}(\partial\Omega)}\), \(\|\gamma\|_{C^{2}(\partial \Omega)}\), \(\partial \Omega \) and \(\operatorname{diam}(\Omega)\). Combining this fact with the estimates \eqref{5.1} and \eqref{5.2}, we may conclude that
\begin{equation*}
\|\widehat{f}\|_{L^{\Upsilon}_{\omega}-\mathrm{BMO}(\Omega)}\leq \mathrm{C}(\|f\|_{L^{\Upsilon}_{\omega}-\mathrm{BMO}(\Omega)}+\|g\|_{C^{1,\alpha}(\partial \Omega)}).
\end{equation*}
Thus, we remove the dependence of the lower-order terms ($u$ and $Du$) from the governing operator, and by a standard boundary flattening argument it suffices to consider  the case where $u$ is a solution to
\begin{equation*}
\left\{
\begin{array}{rclcl}
G(D^2u,x) &=& \widehat{f}(x)& \mbox{in} &   \mathrm{B}^{+}_{1} \\
\mathcal{G}(Du,u,x)&=& g(x) &\mbox{on}& \mathrm{T}_{1}.
\end{array}
\right.
\end{equation*}
In this case, since $G$ also satisfies the structural hypothesis {\bf ($A_4$)$^{\star}$}, we may invoke the $L^{\Upsilon}_{\omega}$-$\mathrm{BMO}$ estimates for the problem found in \cite[Theorem 5.2]{Bessa} and, by a standard covering theorem, we conclude that $D^{2}u\in L^{\Upsilon}_{\omega}$-$\mathrm{BMO}(\Omega)$ and satisfies the following estimate
\begin{equation*}
\|D^{2}u\|_{L^{\Upsilon}_{\omega}-\mathrm{BMO}(\Omega)}\leq \mathrm{C}(\|u\|_{L^{\infty}(\Omega)}^{n}+\|f\|_{L^{\Upsilon}_{\omega}-\mathrm{BMO}(\Omega)}+\|g\|_{C^{1,\alpha}(\partial\Omega)}),
\end{equation*}
for $\mathrm{C}>0$ depending only on $n$, $\lambda$, $\Lambda$, $a$, $b$, $\bar{p}$, $p_{2}$, $\Phi$, $\omega$, $\delta_{0}$, $\mathrm{C}_{1}$, $\mathrm{C}_{2}$, $\bar{\theta}$, $\alpha^{\prime}$, $\|\beta\|_{C^{2}(\partial \Omega)}$, $\|\gamma\|_{C^{2}(\partial \Omega)}$ and $\|\partial \Omega\|_{C^{2,\alpha}}$, where $\mathrm{C}_{1}$ and $\mathrm{C}_{2}$ are the constants given by Remark \ref{Remark2.12}.
\end{proof}

\begin{remark}
By the assumptions required for the weighted Orlicz--$\mathrm{BMO}$ estimates in \cite{Bessa}, Theorem \ref{Casevarphiequaltozero} remains valid in case $\beta$ and $\gamma$ are merely of class $C^{1,\alpha}$, instead of $C^{2}$, and the dependence on the constants is modified accordingly.
\end{remark}

As a consequence of this result, we obtain a refinement of the estimates.
\begin{corollary}\label{cor:hes-BMO}
Assume that the structural conditions {\bf$(A_1)$-($A_3$)} and {\bf ($A_4$)$^{\star}$} hold. Let $u$ be a $L^{p}$-viscosity solution to 
 \begin{equation*}
\left\{
\begin{array}{rclcl}
 F(D^2u,Du,u,x) &=& f(x)& \mbox{in} &   \Omega \\
\mathcal{G}(Du,u,x)&=& g(x) &\mbox{on}& \partial \Omega,
\end{array}
\right.
\end{equation*}
where $p=\bar{p}n$, and  $\bar{p}>1$ is the constant given by Lemma \ref{mergulhoorliczlebesgue}, and $f\in L^{\Upsilon}_{\omega}-\mathrm{BMO}(\Omega)\cap L^{\Upsilon}_{\omega}(\Omega)$ for $\Upsilon(t)=\Phi(t^{n})$. Then, $D^{2}u\in L^{\Upsilon}_{\omega}-\mathrm{BMO}(\Omega)$ and satisfies the following estimate
\begin{equation}\label{5.3}
\|D^{2}u\|_{L^{\Upsilon}_{\omega}-\mathrm{BMO}(\Omega)}\leq \mathrm{C}(\|f\|_{L^{\Upsilon}_{\omega}-\mathrm{BMO}(\Omega)}+\|g\|_{C^{1,\alpha}(\partial\Omega)}),
\end{equation}
for a positive constant $\mathrm{C}$ depending only on $n$, $\lambda$, $\Lambda$, $a$, $b$, $\bar{p}$, $p_{2}$, $\Phi$, $\omega$, $\delta_{0}$, $\bar{\theta}$, $\alpha^{\prime}$, $\|\beta\|_{C^{2}(\partial \Omega)}$, $\|\gamma\|_{C^{2}(\partial \Omega)}$ and $\|\partial \Omega\|_{C^{2,\alpha}}$.
\end{corollary}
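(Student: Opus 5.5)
The plan is to derive \eqref{5.3} from Theorem \ref{Casevarphiequaltozero} by removing the term $\|u\|_{L^{\infty}(\Omega)}^{n}$ from its right-hand side. Theorem \ref{Casevarphiequaltozero} gives
\[
\|D^{2}u\|_{L^{\Upsilon}_{\omega}-\mathrm{BMO}(\Omega)}\leq \mathrm{C}\big(\|u\|_{L^{\infty}(\Omega)}^{n}+\|f\|_{L^{\Upsilon}_{\omega}-\mathrm{BMO}(\Omega)}+\|g\|_{C^{1,\alpha}(\partial\Omega)}\big),
\]
so it suffices to bound $\|u\|_{L^{\infty}(\Omega)}$ by the data. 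Since the statement is homogeneous in $(u,f,g)$ only up to this power $n$, I would first normalize: by the usual scaling $u\mapsto u/\kappa$ with $\kappa=\|u\|_{L^{\infty}}+\|f\|_{L^{\Upsilon}_{\omega}-\mathrm{BMO}}+\|g\|_{C^{1,\alpha}}$ (the operator $F_{\kappa}(\mathrm{M},\xi,r,x)=\kappa^{-1}F(\kappa\mathrm{M},\kappa\xi,\kappa r,x)$ stays in the same structural class with the same recession operator $F^{\star}$), it is enough to treat the smallness regime where the $n$-th power is dominated by the first power.

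The main step is then an $L^{\infty}$ bound of the form $\|u\|_{L^{\infty}(\Omega)}\leq \mathrm{C}(\|f\|_{L^{p}(\Omega)}+\|g\|_{L^{\infty}(\partial\Omega)})$. For this I would use the ABP-type maximum principle for oblique problems, using that $F$ is proper by ($E_2$) and that $\gamma\leq0$, $\beta\cdot\vec{\mathbf n}\geq\delta_0$. Alternatively, one can use the global $W^{2,\Upsilon}_{\omega}$ estimate of \cite[Theorem 3.5]{Bessa24}, already invoked in Theorem \ref{Casevarphiequaltozero}, together with the embeddings $W^{2,\Upsilon}_{\omega}\hookrightarrow W^{2,p}\hookrightarrow L^{\infty}$ coming from Lemma \ref{mergulhoorliczlebesgue}. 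This gives
\[
\|u\|_{L^{\infty}(\Omega)}\leq \mathrm{C}\big(\|f\|_{L^{\Upsilon}_{\omega}(\Omega)}+\|g\|_{C^{1,\alpha}(\partial\Omega)}\big).
\]

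The expected obstacle is that this bound involves $\|f\|_{L^{\Upsilon}_{\omega}}$ rather than the BMO seminorm. A BMO seminorm does not see constants, so a purely BMO right-hand side can only be obtained modulo such a normalization. I would handle this by decomposing $f=(f-f_{\Omega})+f_{\Omega}$. The oscillation part is controlled in $L^{\Upsilon}_{\omega}$ by the BMO norm via Remark \ref{Remark2.12} and John--Nirenberg on the bounded domain. The constant part $f_{\Omega}$ is absorbed into the convention that the BMO norm on a bounded domain includes the mean, as in \cite{Bessa}. Finally, I would collect the constants, checking that they depend only on the listed quantities, to conclude \eqref{5.3}.
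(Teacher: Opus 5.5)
Your reduction is sound, but the obstacle you flag in your last paragraph cannot be removed by a convention. In the paper, $\|f\|_{L^{\Upsilon}_{\omega}-\mathrm{BMO}(\Omega)}=\sup_{B}\|(f-f_B)\chi_B\|_{L^{\Upsilon}_{\omega}}/\|\chi_B\|_{L^{\Upsilon}_{\omega}}$ is a seminorm with no mean term. With that definition, \eqref{5.3} is false. Take $\Omega=\mathrm{B}_1$, $F(\mathrm{M},\xi,r,x)=\operatorname{tr}\mathrm{M}-r$, $\beta=\vec{\mathbf{n}}$, $\gamma\equiv-1$, $f\equiv 1$ and $g\equiv 0$. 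Every hypothesis holds, including $(E_1)$--$(E_2)$, with $F^{\star}=\operatorname{tr}$. The unique classical solution $u$ is not constant: a constant solution would have to be $-1$, and then $\vec{\mathbf{n}}\cdot Du-u=1\neq 0$. Hence $\operatorname{tr}D^2u=\Delta u=1+u$ is not constant, so $\|D^2u\|_{L^{\Upsilon}_{\omega}-\mathrm{BMO}(\Omega)}>0$, while the right-hand side of \eqref{5.3} vanishes.

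The paper's proof hides exactly this point. It argues by contradiction and compactness. It normalizes $\|D^2u_j\|=1$, so $f_j\to 0$ in BMO and $g_j\to0$, then bounds $u_j$ in $W^{2,p}$. It passes to a limit solving the homogeneous problem and uses uniqueness to get $u_j\to0$ uniformly, contradicting Theorem \ref{Casevarphiequaltozero}. The step $\|f_j\|_{L^p(\Omega)}\le \mathrm{C}_p\|f_j\|_{p-\mathrm{BMO}(\Omega)}$ fails for constants. Rescalings of the example above break it: take $F_j=F$, $g_j=0$, $u_j=u/\|D^2u\|_{L^{\Upsilon}_{\omega}-\mathrm{BMO}(\Omega)}$ and $f_j\equiv 1/\|D^2u\|_{L^{\Upsilon}_{\omega}-\mathrm{BMO}(\Omega)}$. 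So you should state your conclusion explicitly as a corrected estimate, with $\|f\|_{L^{\Upsilon}_{\omega}-\mathrm{BMO}(\Omega)}+|f_{\Omega}|$ on the right. Do not present it as a convention the paper does not adopt.

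For that corrected statement your argument works, and it takes a genuinely different route from the paper's. Instead of compactness and stability, you remove the power $n$ with the scaling $F_\kappa(\mathrm{M},\xi,r,x)=\kappa^{-1}F(\kappa\mathrm{M},\kappa\xi,\kappa r,x)$. This is legitimate: $F_\kappa$ has the same $(\lambda,\Lambda,a,b)$, the same properness constant $d$ and the same $F^{\star}$, so the constant of Theorem \ref{Casevarphiequaltozero} is unchanged. You then bound $\|u\|_{L^{\infty}}$ directly, by ABP or by the global $W^{2,\Upsilon}_{\omega}$ estimate. This gives a constructive constant and avoids identifying a limit operator $F_\infty$.

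Like the paper's appeal to \cite[Theorem 7.19]{Lieberman}, your argument relies on a properness or comparison hypothesis such as $(E_2)$, which the corollary does not list. Some such hypothesis is unavoidable. With $\gamma=0$, $F=\operatorname{tr}\mathrm{M}+\mu r$ for a nonzero Neumann eigenvalue $\mu$, and $f=g=0$, a Neumann eigenfunction violates the estimate however the norm of $f$ is defined.
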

\begin{proof}
By Theorem \ref{Casevarphiequaltozero}, we know that $D^{2}u\in L^{\Upsilon}_{\omega}-\mathrm{BMO}(\Omega)$, so it remains to show that the estimate \eqref{5.3} holds. We argue by  contradiction, if \eqref{5.3} does not hold, there exist sequences $(F_{j})_{j\in\mathbb{N}}$, $(u_{j})_{j\in\mathbb{N}}$, $(f_{j})_{j\in\mathbb{N}}$, and $(g_{j})_{j\in\mathbb{N}}$ satifying
\begin{equation*}
\left\{
\begin{array}{rclcl}
 F_{j}(D^2u_{j},Du_{j},u_{j},x) &=& f_{j}(x)& \mbox{in} &   \Omega \\
\mathcal{G}(Du_{j},u_{j},x)&=& g_{j}(x) &\mbox{on}& \partial \Omega,
\end{array}
\right.
\end{equation*}
where each $F_{j}$ satisfies the assumptions {\bf $(A_1)$-($A_3$)} and {\bf ($A_4$)$^{\star}$} with the same constants, $f_{j}\in L^{\Upsilon}_{\omega}-\mathrm{BMO}(\Omega)\cap L^{\Upsilon}_{\omega}(\Omega)$ and $g_{j}\in C^{1,\alpha}(\partial \Omega)$. However,
\begin{equation}\label{5.4}
\|D^{2}u_{j}\|_{L^{\Upsilon}_{\omega}-\mathrm{BMO}(\Omega)}> j(\|f_{j}\|_{L^{\Upsilon}_{\omega}-\mathrm{BMO}(\Omega)}+\|g_{j}\|_{C^{1,\alpha}(\partial\Omega)})
\end{equation}
for any $j\in\mathbb{N}$. By a normalization argument, we may assume that $\|D^{2}u_{j}\|_{L^{\Upsilon}_{\omega}-\mathrm{BMO}(\Omega)}= 1$ for all $j$. In this case, from \eqref{5.4} we have that $f_{j}\to 0$ in $L^{\Upsilon}_{\omega}-\mathrm{BMO}(\Omega)$ and $g_{j}\to 0$ in $C^{1,\alpha}(\partial \Omega)$. The convergence of the source terms, combined with the embedding of the $p$-$\mathrm{BMO}$ spaces (cf. \cite[Chapter IV, \S 1.1.3]{Stein}) and Remark \ref{Remark2.12}, imply that $f_{j}\to 0$ in $p$-$\mathrm{BMO}$, for $p=\bar{p}n$ as in the statement. Consequently, since $f_{j}\in L^{\Upsilon}_{\omega}(\Omega)\subset L^{p}(\Omega)$ (by Lemma \ref{mergulhoorliczlebesgue} and the definition of $\Upsilon$), it follows from \cite[Chapter IV, \S 2]{Stein} that
\begin{equation*}
\|f_{j}\|_{L^{p}(\Omega)}\leq \mathrm{C}_{p}\|f_{j}\|_{p-\mathrm{BMO}(\Omega)}\to 0.
\end{equation*}
From this convergence and the boundary data, we may invoke the global $W^{2,p}$ estimates for each $u_{j}$ given by \cite[Theorem 3.5]{Bessa}, in the case $\Phi(t)=t^{\bar{p}}$ and $\omega\equiv 1$, and guarantee that there exists a constant $\mathrm{C}>0$, independent of $j$, such that
\begin{equation}\label{5.5}
\|u_{j}\|_{W^{2,p}(\Omega)}\leq \mathrm{C}\big(\|f_{j}\|_{L^{p}(\Omega)}+\|g_{j}\|_{C^{1,\alpha}(\partial \Omega)}\big)
\end{equation}
for all $j\in\mathbb{N}$. Hence, up to a subsequence, there exists a function $u_{\infty}$ such that $u_{j}\to u_{\infty}$ in $W^{2,p}(\Omega)$ as $j\to +\infty$.
Further, by the structural properties, $F_{j}\to F_{\infty}$, locally uniformly as $j\to +\infty$, where $F_{\infty}$ is a $(\lambda,\Lambda,a,b)$-elliptic operator. Consequently, by the Sobolev embeddings and the convergences above, we may conclude by the  stability theory that $u_{\infty}$ solves
\begin{equation*}
\left\{
\begin{array}{rclcl}
 F_{\infty}(D^2u_{\infty},Du_{\infty},u_{\infty},x) &=& 0& \mbox{in} &   \Omega \\
\mathcal{G}(Du_{\infty},u_{\infty},x)&=& 0 &\mbox{on}& \partial \Omega.
\end{array}
\right.
\end{equation*}
Since the operator $\mathcal{G}=\mathcal{G}(\vec{\xi},s,x)$ is of class $C^{2}$ with respect to $\vec{\xi}$ and $x$, we may conclude from \cite[Theorem 7.19]{Lieberman} that $u_{\infty}\equiv 0$. Finally, using the estimate in Theorem \ref{Casevarphiequaltozero} and the convergences above, it follows that
\begin{eqnarray*}
1=\|D^{2}u_{\infty}\|_{L^{\Upsilon}_{\omega}-\mathrm{BMO}(\Omega)}\leq \mathrm{C}\liminf_{j\to \infty}(\|u_{j}\|_{L^{\infty}(\Omega)}^{n}+\|f_{j}\|_{L^{\Upsilon}_{\omega}-\mathrm{BMO}(\Omega)}+\|g_{j}\|_{C^{1,\alpha}(\partial\Omega)})=0,  
\end{eqnarray*}
which is a contradiction. This completes the proof.
\end{proof}
We are now in a position to prove Theorem \ref{T2}.
\begin{proof}[\bf Proof of the Theorem \ref{T2}]
By applying  Theorem \ref{T1}, we already have the existence of a $L^p$-viscosity solution to the problem \eqref{1.1}. From Corollary \ref{cor:hes-BMO}, we can take a sequence of functions $\{u_\eps\}_{\eps>0}$ with $\{D^2 u_\eps\}_{\eps>0} \subset L^{\Upsilon}_{\omega}-\mathrm{BMO}(\Omega)$ satisfying the problem
\eqref{penprob}, with penalized source term. Thus, we are able to follow the same steps in the proof of Theorem \ref{T1}, obtaining at the end of this proof, 
\[
\|D^2 u_\eps\|_{L^{\Upsilon}_{\omega}-\mathrm{BMO}}
\le C \left(   \|f\|_{L^{\Upsilon}_{\omega}-\mathrm{BMO}} +\|\varphi_{u_{\varepsilon},\varepsilon}\|_{L^{\Upsilon}_{\omega}-\mathrm{BMO}(\Omega)}
+\|g\|_{C^{1,\alpha}(\partial\Omega)}\right),
\]
instead of \eqref{est1teo1.2},
 for a  constant $\mathrm{C}>0$ depending only on $n$, $\lambda$, $\Lambda$, $a$, $b$, $\bar{p}$, $p_{2}$, $\Phi$, $\omega$, $\delta_{0}$, $\bar{\theta}$, $\alpha^{\prime}$, $\|\beta\|_{C^{2}(\partial \Omega)}$, $\|\gamma\|_{C^{2}(\partial \Omega)}$ and $\|\partial \Omega\|_{C^{2,\alpha}}$. Passing to the limit, as $\eps \to 0$, we obtain
 the following estimate
\begin{equation*}
\|D^{2}u\|_{L^{\Upsilon}_{\omega}-\mathrm{BMO}(\Omega)}\leq \mathrm{C}(\|f\|_{L^{\Upsilon}_{\omega}-\mathrm{BMO}(\Omega)}+\|\varphi_{u}\|_{L^{\Upsilon}_{\omega}-\mathrm{BMO}(\Omega)}+\|g\|_{C^{1,\alpha}(\partial\Omega)}).
\end{equation*}
Therefore,  the Hessian of the solutions  for the Grad-Mercier type equation \eqref{1.1} satisfy  $L^{\Upsilon}_{\omega}-\mathrm{BMO}(\Omega)$ estimates.
    
\end{proof}

\begin{remark}
We emphasize that the approach developed in the results above is fully flexible and may plausibly be adapted to derive global estimates in weighted Lorentz spaces (cf. \cite{BR}).
\end{remark}

An interesting consequence of Theorem \ref{T2} is the derivation of local $C^{1,\text{Log-Lip}}$ estimates for the solutions to problem \eqref{1.1}.

\begin{corollary}\label{Cor5.5}
Assume the hypotheses of Theorem \ref{T2} hold, and let $u$ be a solution to problem \eqref{1.1}. Then,
$u\in C^{1,\text{Log-Lip}}_{loc}(\Omega)$. More precisely, given $\Omega^{\prime}\Subset \overline{\Omega}$, we have
\begin{equation*}
|Du(x)-Du(y)|\leq \mathrm{C}(\|f\|_{L^{\Upsilon}_{\omega}-\mathrm{BMO}(\Omega)}+\|\varphi_{u}\|_{L^{\Upsilon}_{\omega}-\mathrm{BMO}(\Omega)}+\|g\|_{C^{1,\alpha}(\partial \Omega)})|x-y||\log|x-y||,\,\, \forall x,y\in \Omega^{\prime},\, x\neq y,
\end{equation*}
where the constant $\mathrm{C}>0$ depends only on $n$, $\lambda$, $\Lambda$, $a$, $b$, $\bar{p}$, $p_{2}$, $\Phi$, $\omega$, $\delta_{0}$, $\bar{\theta}$, $\alpha^{\prime}$, $\|\beta\|_{C^{2}(\partial \Omega)}$, $\|\gamma\|_{C^{2}(\partial \Omega)}$, $\Omega^{\prime}$ and $\|\partial \Omega\|_{C^{2,\alpha}}$.
\end{corollary}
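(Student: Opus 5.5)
The approach is to use Theorem \ref{T2}, which gives $D^{2}u\in L^{\Upsilon}_{\omega}-\mathrm{BMO}(\Omega)$ with the stated bound, and then pass to a classical Log-Lipschitz estimate for $Du$ through the Campanato--John--Nirenberg machinery. Denote by $\mathcal{K}$ the right-hand side quantity $\|f\|_{L^{\Upsilon}_{\omega}-\mathrm{BMO}(\Omega)}+\|\varphi_{u}\|_{L^{\Upsilon}_{\omega}-\mathrm{BMO}(\Omega)}+\|g\|_{C^{1,\alpha}(\partial\Omega)}$.

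\textbf{Step 1 (reduction to classical BMO).} By Remark \ref{Remark2.12}, $\|D^{2}u\|_{\mathrm{BMO}(\Omega)}\leq \mathrm{C}_{1}^{-1}\|D^{2}u\|_{L^{\Upsilon}_{\omega}-\mathrm{BMO}(\Omega)}\leq \mathrm{C}\mathcal{K}$. By the John--Nirenberg equivalence with $p$-BMO, this also controls the $L^{1}$ oscillation of $D^{2}u$ on every ball $B\subset\Omega$ (and, by the same remark, the $L^{q}$ oscillation for any finite $q$).

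\textbf{Step 2 (control of averages across scales).} Fix $x,y\in\Omega'$ with $r=|x-y|$ small, say $r<\min\{\tfrac12,\tfrac14\operatorname{dist}(\Omega',\partial\Omega)\}$. When $\Omega'$ touches $\partial\Omega$, use balls intersected with $\Omega$; this is allowed because $\partial\Omega\in C^{2,\alpha}$ makes the sets $\Omega\cap B_{\rho}(x)$ uniformly doubling. Set $A_{\rho}(z)=(D^{2}u)_{B_{\rho}(z)}$. The standard dyadic telescoping gives $|A_{2^{-k-1}\rho}(z)-A_{2^{-k}\rho}(z)|\leq 2^{n}\|D^{2}u\|_{\mathrm{BMO}}$. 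Summing from scale $r$ up to a fixed scale $R_{0}$ then yields
$$|A_{r}(z)|\leq |A_{R_{0}}(z)|+\mathrm{C}\|D^{2}u\|_{\mathrm{BMO}}\,|\log r|.$$
The term $|A_{R_{0}}(z)|$ is bounded by $R_{0}^{-n}\|D^{2}u\|_{L^{1}}$. Via Lemma \ref{mergulhoorliczlebesgue} and the estimate of Theorem \ref{T1}, this is in turn bounded by $\mathrm{C}(\|f\|_{L^{\Upsilon}_{\omega}}+\|\varphi_{u}\|_{L^{\Upsilon}_{\omega}}+\|g\|)$.

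\textbf{Step 3 (Campanato estimate for $Du$).} Apply Poincar\'e's inequality on $B=B_{2r}(x)\supset B_{r}(y)$ to $w=Du-\ell$, where $\ell(z)=A_{2r}(x)(z-x)+c$. This gives
$$\frac{1}{|B|}\int_{B}|Du-\ell|\leq \mathrm{C}r\,\frac{1}{|B|}\int_{B}|D^{2}u-A_{2r}(x)|\leq \mathrm{C}r\|D^{2}u\|_{\mathrm{BMO}}.$$
Since $Du$ is continuous ($W^{2,p}\hookrightarrow C^{1}$), Lebesgue points are everywhere. The usual Campanato iteration at each point then gives
$$|Du(x)-\ell_{x,\rho}(x)|\leq \mathrm{C}\sum_{k}2^{-k}\rho\,\|D^{2}u\|_{\mathrm{BMO}}\leq \mathrm{C}\rho\|D^{2}u\|_{\mathrm{BMO}}.$$
Comparing the affine approximations centered at $x$ and at $y$ at scale $2r$ yields
$$|Du(x)-Du(y)|\leq \mathrm{C}r\|D^{2}u\|_{\mathrm{BMO}}+|A_{2r}(x)|\,r.$$
Inserting the bound of Step 2 on $|A_{2r}(x)|$ produces $\mathrm{C}\mathcal{K}\,r|\log r|$, up to lower-order $W^{2,\Upsilon}_{\omega}$ norms. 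For $|x-y|$ bounded below, the estimate follows trivially from the $C^{1}$ bound. The $W^{2,\Upsilon}_{\omega}$ contributions then have to be expressed in terms of $\mathcal{K}$.

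\textbf{Main obstacle.} The delicate point is exactly this last absorption. The logarithmic growth in Step 2 needs a fixed-scale anchor, namely $|A_{R_{0}}|$, and that anchor is an $L^{1}$ quantity rather than a BMO one. Two routes are available. One is to note that the proof of Corollary \ref{cor:hes-BMO} controls $\|u\|_{W^{2,p}}$ by the data through the compactness argument, so the anchor is dominated by $\mathcal{K}$. The other is to accept a constant depending also on $\|u\|_{W^{2,\Upsilon}_{\omega}}$, which Theorem \ref{T1} already bounds in terms of the data. I would try the first route, reusing estimate \eqref{5.5} applied with source $f+\varphi_{u}$.
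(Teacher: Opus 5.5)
Your Step 1 is the paper's first move. The paper then quotes \cite[Lemma 1]{AzzBed15} in the form $|Du(x)-Du(y)|\le \mathrm{C}\|D^{2}u\|_{\mathrm{BMO}(\Omega)}|x-y||\log|x-y||$ and closes with Remark \ref{Remark2.12} and Theorem \ref{T2}. Your Steps 2--3 reprove that lemma by hand, and you are right that such an estimate needs a fixed-scale anchor $|A_{R_0}|$.

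\textbf{The gap: route 1 does not close.} Estimate \eqref{5.5} only controls $\|u\|_{W^{2,p}(\Omega)}$ by $\|f+\varphi_u\|_{L^{p}(\Omega)}$. The paper converts this into a BMO bound only through the step $\|f_j\|_{L^{p}(\Omega)}\le \mathrm{C}_p\|f_j\|_{p-\mathrm{BMO}(\Omega)}$ in the proof of Corollary \ref{cor:hes-BMO}. That step is false, because every BMO quantity in the paper is a seminorm vanishing on constants.

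In fact no argument can bound the anchor by $\mathcal{K}$. Take $\Omega=\mathrm{B}_1$, $F(\mathrm{M},\xi,r,x)=\operatorname{tr}\mathrm{M}$, $f\equiv c\neq 0$, $\varphi\equiv 0$, $g\equiv 0$, $\gamma\equiv -1$ and $\beta(x)=-x$ (the inner normal). Then
\[
u(x)=\frac{c}{2n}\left(|x|^{2}-3\right)
\]
is a classical solution of \eqref{1.1}, and all hypotheses of Theorem \ref{T2} hold. Here $\mathcal{K}=0$, and even $\|D^{2}u\|_{\mathrm{BMO}(\Omega)}=0$, while $|Du(x)-Du(y)|=\frac{|c|}{n}|x-y|$. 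So the inequality as stated is false. The displayed inequality in the paper's proof fails on the same example: the citation hides exactly the anchor term you found.

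\textbf{What your argument does prove (route 2).} You obtain
\[
|Du(x)-Du(y)|\le \mathrm{C}\bigl(\mathcal{K}+\|D^{2}u\|_{L^{1}(\Omega)}\bigr)|x-y|\bigl(1+|\log|x-y||\bigr).
\]
Then $\|D^{2}u\|_{L^{1}(\Omega)}$ is bounded by $\|f\|_{L^{\Upsilon}_{\omega}(\Omega)}+\|\varphi_{u}\|_{L^{\Upsilon}_{\omega}(\Omega)}+\|g\|_{C^{1,\alpha}(\partial\Omega)}$, via Lemma \ref{mergulhoorliczlebesgue} and a global $W^{2,\Upsilon}_{\omega}$ estimate. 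This is a corrected, weaker statement, not the one claimed.

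Two further cautions:
\begin{enumerate}
\item For an arbitrary solution $u$, take that bound from the estimate for the problem with frozen source $f+\varphi_u$ (e.g.\ \cite[Theorem 3.5]{Bessa24}). Do not take it from Theorem \ref{T1}, which only asserts its estimate for the solution it constructs.
\item The paper's BMO seminorm is a supremum over balls $\mathrm{B}\subset\Omega$. If $\Omega'$ reaches $\partial\Omega$, the doubling property of $\Omega\cap\mathrm{B}_\rho$ alone does not control oscillations on boundary balls. You need an extra argument, for instance that $C^{2,\alpha}$ domains are uniform domains, so interior-ball BMO controls those oscillations.
\end{enumerate}
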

\begin{proof}
First, we already know that $u\in C^{1}(\Omega)$, due to the Sobolev  embedding
$W^{2,\Upsilon}_{\omega}(\Omega)\hookrightarrow W^{2,p}(\Omega)$, since $p=\bar{p}n\in (n,\infty)$.
In this case, it remains  to prove that $Du\in C^{0,\text{Log-Lip}}_{loc}(\Omega)$. 
Indeed, by the global $L^{\Upsilon}_{\omega}-\mathrm{BMO}$ estimates for the Hessian and Remark \ref{Remark2.12}, we have that $D^{2}u\in \mathrm{BMO}(\Omega)$. 
Therefore, by the local Log-Lipschitz estimates \cite[Lemma 1]{AzzBed15} we obtain the desired regularity with the following estimate
\begin{equation*}
|Du(x)-Du(y)|\leq \mathrm{C}\|D^{2}u\|_{\mathrm{BMO}(\Omega)}|x-y||\log|x-y||, \,\, \forall x,y\in \Omega^{\prime},\, x\neq y.
\end{equation*}
Using again Remark \ref{Remark2.12} to estimate the $\mathrm{BMO}$-norm of the Hessian by its corresponding $L^{\Upsilon}_{\omega}-\mathrm{BMO}$ norm, and applying the estimate obtained by Theorem \ref{T2}, we conclude the desired result.
\end{proof}

\begin{remark}
 Corollary \ref{Cor5.5} ensures that the  solutions to problem \eqref{1.1} belong to the class $C^{1,1^{-}}_{\mathrm{loc}}$, that is, they are locally of class $C^{1,\nu}$ for every $0<\nu<1$.
\end{remark}

\subsection*{Declarations}

\subsubsection*{Funding}

J.S. Bessa has been supported by FAPESP-Brazil under Grant No. 2023/18447-3.  M. Soares has been supported by CNPq-Brazil under the Grants No. 303.154/2025-0 and  No. 152.786/2025-2.

\subsubsection*{Conflict of interest}

On behalf of all authors, the corresponding author states that there is no conflict of interest

\subsubsection*{Data availability statement}

Data availability does not apply to this article as no new data were created or analyzed in this study.

\bigskip
\noindent\textsc{Junior da Silva Bessa}\\
Departamento de Matem\'{a}tica\\
Instituto de Mamatem\'{a}tica, Estat\'{i}stica e Computa\c{c}\~{a}o Cient\'{i}fica - IMECC\\
Universidade Estadual de Campinas - Unicamp\\
Campinas, SP, Brazil\\
\noindent{Email address:\texttt{jbessa@unicamp.br}}
\bigskip
	
\noindent\textsc{Reshmi Biswas}\\
Departamento de Matem\'{a}tica\\
Universidad del Bío-Bío\\
Concepción,  Chile. \\
\noindent{Email address: \texttt{rbiswas@ubiobio.cl}}
\bigskip

\noindent\textsc{Mayra Soares}\\
Department de Matemática\\
Universidade de Bras\'{i}lia - UnB \\
Bras\'{i}lia, DF, Brazil\\
and\\
Departamento de Matem\'{a}tica\\
Instituto de Mamatem\'{a}tica, Estat\'{i}stica e Computa\c{c}\~{a}o Cient\'{i}fica - IMECC\\
Universidade Estadual de Campinas - Unicamp\\
Campinas, SP, Brazil\\
\noindent{Email address: \texttt{mayra.soares@unb.br}}


\begin{thebibliography}{99}

\bibitem{AzzBed15} J. Azzam and J. Bedrossian, Bounded mean oscillation and the uniqueness of active scalar equations, Trans. Amer. Math. Soc. {\bf 367} (2015), no. 5, 3095-3118.

\bibitem{Bessa24} J. da~S.~Bessa, Weighted Orlicz regularity for fully nonlinear elliptic equations with oblique derivative at the boundary via asymptotic operators, J. Funct. Anal. {\bf 286} (2024), no.~4, Paper No. 110295, 35 pp.

\bibitem{Bessa} J. da~S.~Bessa, J. V. ~ da Silva, M. N. B. Frederico and G. C. Ricarte, Sharp Hessian estimates for fully nonlinear elliptic equations under relaxed convexity assumptions, oblique boundary conditions and applications, J. Differential Equations {\bf 367} (2023), 451--493.

\bibitem{BR}  J. da~S.~Bessa and G.~C. Ricarte, Global weighted Lorentz estimates of oblique tangential derivative problems for weakly convex fully nonlinear operators, Potential Anal. {\bf 62} (2025), no.~4, 817--842.

\bibitem{BRS} J. da~S.~Bessa, G.~C. Ricarte and P.~H.~C. Silva, Optimal gradient regularity to degenerate fully nonlinear elliptic models with oblique boundary condition, Nonlinear Anal. {\bf 262} (2026), Paper No. 113919, 16 pp.

\bibitem{Brezis} H.~R. Brezis, {\it Functional analysis, Sobolev spaces and partial differential equations}, Universitext, Springer, New York, 2011.

\bibitem{BH20} S.-S. Byun and J. Han, $W^{2,p}$-estimates for fully nonlinear elliptic equations with oblique boundary conditions, J. Differential Equations {\bf 268} (2020), no.~5, 2125--2150.

\bibitem{BJ1} S.-S. Byun, J. Han and J. Oh, On $W^{2,p}$-estimates for solutions of obstacle problems for fully nonlinear elliptic equations with oblique boundary conditions, Calc. Var. Partial Differential Equations {\bf 61} (2022), no.~5, Paper No. 162, 15 pp.

\bibitem{BLOK} S.-S. Byun, M. Lee and J. Ok, Weighted regularity estimates in Orlicz spaces for fully nonlinear elliptic equations, Nonlinear Anal. {\bf 162} (2017), 178--196.

\bibitem{BKO} S.-S. Byun, H. Kim and J. Oh, $C^{1,\alpha }$ regularity for degenerate fully nonlinear elliptic equations with oblique boundary conditions on $C^1$ domains, Calc. Var. Partial Differential Equations {\bf 64} (2025), no.~5, Paper No. 174, 20 pp.

\bibitem{CC} L.~\'A. Caffarelli and X. Cabr\'e, {\it Fully nonlinear elliptic equations}, American Mathematical Society Colloquium Publications, 43, Amer. Math. Soc., Providence, RI, 1995.

\bibitem{CCKS} L.~\'A. Caffarelli, M.G. Crandall, M. Kocan, A. \'{S}wi\c{e}ch, On viscosity solutions of fully nonlinear equations with measurable ingredients, Comm. Pure Appl. Math. {\bf 49} (1996), no.~4, 365--397.

\bibitem{CafFarRes} L.~\'A. Caffarelli, A. Farah and D. Restrepo, On the Grad-Mercier equation and Semilinear Free Boundary Problems.  Preprint, arXiv:2504.12548v1 [math.AP] (2025). 

\bibitem{CaffToma21} L.~\'A. Caffarelli and I. Tomasetti, Fully nonlinear equations with applications to Grad equations in plasma physics, Comm. Pure Appl. Math. {\bf 76} (2023), no.~3, 604--615.

\bibitem{DHHR11} L.~Diening, P.~Harjulehto, P~ Hästö and M.~ Růžička,  {\it Lebesgue and Sobolev spaces with variable exponents}, Lecture Notes in Mathematics, 2017, Springer, Heidelberg, 2011

\bibitem{fioreza} A. Fiorenza and M. Krbec, Indices of Orlicz spaces and some applications, Comment. Math. Univ. Carolin. {\bf 38} (1997), no.~3, 433--451.

\bibitem{Grad79} H. Grad,  \textit{Alternating dimension plasma transport in three dimensions}. Computing methods in applied sciences and engineering (Proc. Fourth Internat. Sympos., Versailles, 1979), 261–284. North-Holland, Amsterdam–New York, 1980.

\bibitem{GS01} Y. Giga and M.-H. Sato, On semicontinuous solutions for general Hamilton-Jacobi equations, Proc. Japan Acad. Ser. A Math. Sci. {\bf 75} (1999), no.~9, 159--162.

%\bibitem{Ho}K.-P. Ho, Atomic decomposition of Hardy spaces and characterization of BMO via Banach function spaces, Anal. Math. {\bf 38} (2012), no.~3, 173--185.

\bibitem{KokiMiro} V.~M. Kokilashvili and M. Krbec, {\it Weighted inequalities in Lorentz and Orlicz spaces}, World Sci. Publ., River Edge, NJ, 1991.

\bibitem{LaurenceStredulinsky} P. Laurence and E.~W. Stredulinsky, A new approach to queer differential equations, Comm. Pure Appl. Math. {\bf 38} (1985), no.~3, 333--355.

\bibitem{LLO} A.~K. Lerner, E. Lorist and S.~J. Ombrosi, BMO with respect to Banach function spaces, Math. Ann. {\bf 388} (2024), no.~4, 4053--4082.

\bibitem{LiZhang}D.~S. Li and K. Zhang, Regularity for fully nonlinear elliptic equations with oblique boundary conditions, Arch. Ration. Mech. Anal. {\bf 228} (2018), no.~3, 923--967.

\bibitem{Lieberman} G.~M. Lieberman, {\it Oblique derivative problems for elliptic equations}, World Sci. Publ., Hackensack, NJ, 2013.

\bibitem{Lieb01} G.~M. Lieberman, On the H\"older gradient estimate for solutions of nonlinear elliptic and parabolic oblique boundary value problems, Comm. Partial Differential Equations {\bf 15} (1990), no.~4, 515--523.


\bibitem{MossinoTemam} J. Mossino and R.~M. Temam, Directional derivative of the increasing rearrangement mapping and application to a queer differential equation in plasma physics, Duke Math. J. {\bf 48} (1981), no.~3, 475--495.


\bibitem{Muckenhoupt}B. Muckenhoupt, Weighted norm inequalities for the Hardy maximal function, Trans. Amer. Math. Soc. {\bf 165} (1972), 207--226.

\bibitem{pimentel} E.~A. Pimentel and E.~V.~O. Teixeira, Sharp Hessian integrability estimates for nonlinear elliptic equations: an asymptotic approach, J. Math. Pures Appl. (9) {\bf 106} (2016), no.~4, 744--767.

\bibitem{Stein} E.~M. Stein, {\it Harmonic analysis: real-variable methods, orthogonality, and oscillatory integrals}, Princeton Mathematical Series Monographs in Harmonic Analysis, 43 III, Princeton Univ. Press, Princeton, NJ, 1993.

\bibitem{SteinShak05}E.~M. Stein and R. Shakarchi, {\it Real analysis}, Princeton Lectures in Analysis, 3, Princeton Univ. Press, Princeton, NJ, 2005.

\bibitem{Temam} R.~M. Temam, Monotone rearrangement of a function and the Grad-Mercier equation of plasma physics, in {\it Proceedings of the International Meeting on Recent Methods in Nonlinear Analysis (Rome, 1978)}, pp. 83--98, Pitagora, Bologna, 1979.

\bibitem{Winter} N. Winter, $W^{2,p}$ and $W^{1,p}$-estimates at the boundary for solutions of fully nonlinear, uniformly elliptic equations, Z. Anal. Anwend. {\bf 28} (2009), no.~2, 129--164.

\bibitem{Yong25} L. Yong, Weighted Orlicz-Poincar\'e{} inequalities in product spaces, J. Math. Anal. Appl. {\bf 551} (2025), no.~2, Paper No. 129680, 9 pp..

\bibitem{ZJMZ} Y.~Zhang, X.~ Jin, L.~ Ma and Z.~Zhang,  Global Calder\'on-Zygmund estimates for asymptotically convex fully nonlinear Grad-Mercier type equations, Nonlinearity {\bf 39} (2026), no.~2, Paper No. 025011, 19 pp.

\bibitem{ZZ22}J. Zhang and S. Zheng, Weighted Lorentz estimates for fully nonlinear elliptic equations with oblique boundary data, J. Elliptic Parabol. Equ. {\bf 8} (2022), no.~1, 255--281.

\bibitem{ZZZ21} J. Zhang, S. Zheng and C. Zuo, $W^{2,p}$-regularity for asymptotically regular fully nonlinear elliptic and parabolic equations with oblique boundary values, Discrete Contin. Dyn. Syst. Ser. S {\bf 14} (2021), no.~9, 3305--3318.
\end{thebibliography}
\end{document}